\def\url@leostyle{%
 \@ifundefined{selectfont}{\def\UrlFont{\sf}}{\def\UrlFont{\scriptsize\ttfamily}}} \makeatother\urlstyle{leo}
\newtheorem{theorem}{Theorem}
\newtheorem{proposition}[theorem]{Proposition}
\newtheorem{lemma}[theorem]{Lemma}
\theoremstyle{definition}
\newtheorem{definition}[theorem]{Definition}
\theoremstyle{remark}
\newtheorem{remark}[theorem]{Remark}
\numberwithin{equation}{section}
\numberwithin{theorem}{section}
\definecolor{Red}{rgb}{0.9,0,0.0}
\definecolor{Blue}{rgb}{0,0.0,1.0}
\def\cB{\mathcal{B}}
\def\cD{\mathcal{D}}
\def\cN{\mathcal{N}}
\def\cQ{\mathcal{Q}}
\def\bE{\mathbb{E}}
\def\bN{\mathbb{N}}
\def\bP{\mathbb{P}}
\def\bR{\mathbb{R}}
\def\sE{\mathscr{E}}
\def\sF{\mathscr{F}}
\newcommand{\wt}{\widetilde}
\newcommand{\wh}{\widehat}
\newcommand{\1}{\mathbbm{1}}            
\renewcommand{\mid}{\;|\;}              
\DeclareMathOperator{\dif}{d \!}        
\DeclareMathOperator*{\argmin}{arg\,min} 
\DeclareMathOperator{\Var}{Var}          
\def\ed{\;{\stackrel{\cD}{=}}\;}
\def\cd{\;{\stackrel{\cD}{\longrightarrow}}\;}
\title{Bayesian Estimations for Diagonalizable Bilinear SPDEs}
\author{
    Ziteng Cheng \\[-0.3ex]
    \url{zcheng7@hawk.iit.edu} \\[-0.9ex]
 \and
    Igor Cialenco \\[-0.3ex]
    \url{cialenco@iit.edu}  \\[-0.9ex]
    \url{http://math.iit.edu/\~igor}
 \and
     Ruoting Gong \\[-0.3ex]
    \url{rgong2@iit.edu}  \\[-0.9ex]
    \url{http://mypages.iit.edu/\~rgong2}
 \and \\
        {\footnotesize Department of Applied Mathematics, Illinois Institute of Technology} \\
        {\footnotesize W 32nd Str, John T. Rettaliata Engineering Center, Room 208, Chicago, IL 60616, USA}\\
        }
\date{ {\small  
First Circulated: May 29, 2018\\ 
This Version: March 1, 2019 
}}
\begin{document}

\maketitle

\vspace{-2em}

\smallskip

{\footnotesize
\begin{tabular}{l@{} p{350pt}}
  \hline \\[-.2em]
  \textsc{Abstract}: \ &
The main goal of this paper is to study the parameter estimation problem, using the Bayesian methodology, for the drift coefficient of some linear (parabolic) SPDEs driven by a multiplicative noise of special structure. We take the spectral approach by assuming that one path of the first $N$ Fourier modes of the solution is continuously observed over a finite time interval. First, we show that the model is regular and fits into classical local asymptotic normality framework, and thus the MLE and the Bayesian estimators are weakly consistent, asymptotically normal, efficient, and asymptotically equivalent in the class of loss functions with polynomial growth. Secondly, and mainly, we prove a Bernstein-Von Mises type result, that strengthens the existing results in the literature, and that also allows to investigate the Bayesian type estimators with respect to a larger class of priors and loss functions than that covered by classical asymptotic theory. In particular, we prove strong consistency and asymptotic normality of Bayesian estimators in the class of loss functions of at most exponential growth. Finally, we present some numerical examples that illustrate the obtained theoretical results.

 \\[0.5em]
\textsc{Keywords:} \ &  statistical inference for SPDEs, Bayesian statistics, Bernstein-Von Mises, parabolic SPDE, multiplicative noise, stochastic evolution equations, identification problems for SPDEs \\
\textsc{MSC2010:} \ &  60H15, 65L09, 62M99\\[1em]
  \hline
\end{tabular}
}

\section{Introduction}
The analytical theory for Stochastic Partial Differential Equations (SPDEs) has been extensively studied over the past few decades. It is well recognized that SPDEs can be used as an important modeling tool in various applied disciplines such as fluid mechanics, oceanography, temperature anomalies, finance, economics, biological and ecological systems; cf. \cite{ChowBook,LototskyRozovsky2017Book,RozovskyRozovsky2018Book}. On the other hand, the  literature on statistical inference for SPDEs is, relatively speaking, limited. We refer to the recent survey \cite{Cialenco2018} for an overview of the literature and existing methodologies on statistical inference for parabolic SPDEs. Most of the existing results are obtained within the so-called spectral approach, when it is assumed that one path of $N$ Fourier modes of the solution is observed continuously over a finite interval of time, in which case usually the statistical problems are addressed via maximum likelihood estimators (MLEs). Asymptotic properties of the estimators are studied in the large number of Fourier modes regime, $N\rightarrow\infty$, while time horizon is fixed. In particular, there are only few works related to Bayesian statistics for infinite dimensional evolution equations \cite{Bishwal2002,Bishwal1999,PrakasaRao2000}. As usual, studying SPDEs driven by multiplicative noise is more involved, and the parameter estimation problems for such equations are not an exception; the literature on this topic is also limited \cite{CialencoLototsky2009,Cialenco2010,PospivsilTribe2007,CialencoHuang2017,BibingerTrabs2017}.

The main goal of this paper is twofold: to study the parameter estimation problem for the drift coefficient of linear SPDEs driven by a \textit{multiplicative noise} (of special structure) and by applying the \textit{Bayesian estimation} procedure. Similar to the existing literature, we are assuming the spectral approach, and the main objective is to derive Bayesian type estimators and to study their asymptotic properties as $N\to\infty$. Besides contributing to these two important and undeveloped topics, the obtained results will prepare the foundation for studying similar problems for more complex (nonlinear) equations. We consider a multiplicative noise of special structure, which is customary considered in the SPDE applied literature. For example, such type of multiplicative noise appears in modeling and studying the dynamics of geophysical fluids \cite{Glatt-HoltzZiane2009,Glatt-HoltzZiane2008}, studying the stochastic primitive equations \cite{Glatt-HoltzEtAl2014} or  stochastically forced shell model of turbulent flow \cite{FriedlanderGlatt-HoltzVicol2016}, etc. Needless to say, this particular noise structure can be potentially used as modeling feature in SPDEs where the noise is not derived from the first principles, but rather added to capture the imperfections in model and/or measurements. This work is the first attempt to study parameter estimation problems for SPDEs driven by this multiplicative noise. The case of large time asymptotics is omitted here, since it is easily reduced to the corresponding statistical problem for finite dimensional stochastic differential equations, which is a well developed field. It is worth mentioning that the asymptotic properties of MLEs for these equations are trivially obtained, and we mention them here only briefly since they are used in derivation of convergence of proposed Bayesian estimators.

\medskip
\noindent The main contributions of this paper can be summarized as follows:
\vspace{-0.4em}
\begin{itemize}\addtolength{\itemsep}{-0.4\baselineskip}
  \item[$\diamond$]  We derive and study the asymptotic properties of MLE and Bayesian type estimators for the drift coefficient of a stochastic evolution system driven by a multiplicative (space-time) noise.
   \item[$\diamond$] We show that the considered statistical model is regular, and uniformly asymptotically normal, in the sense of \cite{IbragimovKhasminskiiBook1981}, and fits the classical local asymptotic normality (LAN) paradigm. In particular, under suitable assumptions, the MLE and the Bayesian estimators are weakly consistent, asymptotically normal, efficient, and asymptotically equivalent in the class of \textit{loss functions with polynomial growth}.
   \item[$\diamond$] We prove a Bernstein-Von Mises type result, that strengthens the existing results in the literature, and that also allows to investigate the Bayesian type estimators with respect to a larger class of priors and loss functions than that covered by classical asymptotic theory.
   \item[$\diamond$] We prove strong consistency and asymptotic normality of Bayesian estimators in the class of \textit{loss functions of at most exponential growth.}
\end{itemize}
The obtained results and developed techniques, besides their stand along merits, could be potentially useful for investigating some related problems, such as asymptotic properties of estimators in the simultaneous large times and large number of Fourier modes regime, discrete sampling, etc.

The paper is organized as follows. In Section \ref{sec:setup} we setup the problem and provide sufficient conditions on model parameters for the well-posedness of the solution of the underlying SPDEs.  Also here, we specify the statistical model, and show that the model is regular (in statistical sense) and uniformly asymptotically normal.  Section~\ref{sec:MLE} is devoted to MLE and its asymptotic properties. Using LAN approach we show that MLE is weakly consistent, asymptotically normal, and asymptotically efficient; see Theorem~\ref{thm:MLEUAN}. In addition, we also establish the strong consistency and asymptotic normality of MLE by exploiting the specific structure of the estimators. The Bayesian estimators are investigated in Section~\ref{sec:BayeEst}.  We start with the derivation of the Bayesian estimators and briefly cite their properties within the existing general inference theory; see Theorem~\ref{thm:BayeUANPoly}. Next, we proceed to one of the key results of this paper, Theorem \ref{thm:BvM}, a Bernstein-Von Mises type result.
Relevant asymptotic properties of the Bayesian estimators are proved in Sections~\ref{sec:AsymTildeBetaN} and \ref{sec:AsymHatBetaN}.
Using simulation technique we apply the obtained theoretical results to a stochastic heat equation; see Section \ref{sec:NumericalEx}. Some auxiliary results are deferred to Appendix~\ref{AppendA}. Finally, in Appendix \ref{AppendixB} we provide some reasoning on the form of the posterior density used in our framework.

\section{Preliminaries and setup of the problem}\label{sec:setup}

Let $(\Omega,\sF,\{\sF_{t}\}_{t\geq 0},\bP)$ be a stochastic basis satisfying the usual assumptions, on which we consider a sequence of independent standard Brownian motions $\{w_{k}\}_{k\in\bN}$. Assume that $H$ is a separable Hilbert space, with the corresponding inner product $(\cdot,\cdot)_{H}$. Let $A$ be a positive definite self-adjoint operator in $H$ that has only point spectrum, denoted by $\{\mu_{k}\}_{k\in\bN}$, with the corresponding eigenvectors $\{h_{k}\}_{k\in\bN}$. We make the standing assumption that $\{h_{k}\}_{k\in\bN}$ forms a complete orthonormal system in $H$, and $\mu_{k}\rightarrow\infty$. We will denote by $\{H^{\gamma},\gamma\in\bR\}$ the scale of Hilbert spaces generated by the operator $A$, i.e., $H^{\gamma}$ is equal to the closure of the collection of all finite linear combinations of $\{h_{k}\}_{k\in\bN}$ with respect to the norm $\|\cdot\|_{\gamma}:=\|A^{\gamma}\cdot\|_{H}$, with $A^{\gamma}v:=\sum_{k\in\bN}\mu_{k}^{\gamma}v_{k}h_{k}$ for $v=\sum_{k\in\bN}v_{k}h_{k}$.
We will also denote by $[\cdot,\cdot]$ the dual pair between $H^{1/2}$ and $H^{-1/2}$ relative to the inner product $(\cdot, \cdot)_H$.

We consider the following stochastic evolution equation
\begin{align}\label{eq:mainSPDE}
\begin{cases}
\dif u(t)+\theta Au(t)\dif t=\sigma\sum_{k=1}^{\infty}u_{k}(t)h_{k}q_{k}\dif w_{k}(t),\\
u(0)=u_{0}\in H,
\end{cases}
\end{align}
where $u_{k}(t):=(u(t),h_{k})_{H}$, $t\geq 0$, are the Fourier modes of the solution $u$ with respect to $\{h_{k}\}_{k\in\bN}$, $\theta,\sigma\in\bR_{+}:=(0,\infty)$, and $\{q_{k}\}_{k\in\bN}$ is a sequence in $\bR_{+}$.

The well-posedness of equation~\eqref{eq:mainSPDE} can be established either directly or by using some standard results from the general theory of linear SPDEs (see for instance \cite[Section~4.4]{LototskyRozovsky2017Book}).

\begin{theorem}\label{thm:EnU}
Let $T>0$, $u_0\in L_2(\Omega;H)$, and assume that there exist $N_0\in\bN$ and $c>0$ such that
\begin{align}\label{eq:mainAssumption}
2\theta-\sigma^2\frac{q_k^2}{\mu_k} \ge c, \quad \textrm{for all } k\ge N_0.
\end{align}
Then, equation~\eqref{eq:mainSPDE} admits a unique solution  $u\in L^2(\Omega;(C(0,T);H))\cap L^2(\Omega\times(0,T);H^{\frac{1}{2}})$, and
\begin{align*}
\bE\Big(\sup_{t\in(0,T)}\left\|u(t)\right\|_H^2 + \int_{0}^T\left\|u(t)\right\|_{1/2}^2 \dif t \Big) \le C\|u_0\|_{H}^2,
\end{align*}
for some constant $C$.
\end{theorem}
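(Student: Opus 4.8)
The plan is to exploit the diagonal structure of \eqref{eq:mainSPDE}: projecting onto the eigenbasis decouples the system into independent scalar linear equations that are solved in closed form, and hypothesis \eqref{eq:mainAssumption} is exactly what makes the resulting mode-by-mode bounds summable. (Alternatively one may simply invoke the general well-posedness theory for linear SPDEs in \cite[Section~4.4]{LototskyRozovsky2017Book}; the argument below is the ``direct'' route.)

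First I would take the $H$-inner product of \eqref{eq:mainSPDE} with $h_k$, using $Ah_k=\mu_k h_k$ and orthonormality, to see that the Fourier modes $u_k(t)=(u(t),h_k)_H$ must satisfy the scalar linear SDEs
\[
\dif u_k(t)=-\theta\mu_k u_k(t)\,\dif t+\sigma q_k u_k(t)\,\dif w_k(t),\qquad u_k(0)=(u_0,h_k)_H,
\]
each a geometric Brownian motion with the unique strong solution
\[
u_k(t)=(u_0,h_k)_H\,\exp\!\Big(\big(-\theta\mu_k-\tfrac12\sigma^2 q_k^2\big)t+\sigma q_k w_k(t)\Big).
\]
Using the independence of $u_0$ (an $\sF_0$-measurable element) from the Brownian increments and the Gaussian moment generating function, I would compute $\bE[u_k(t)^2]=\bE[(u_0,h_k)_H^2]\,e^{-(2\theta\mu_k-\sigma^2 q_k^2)t}$; and for the running maximum I would use the classical fact that $\sup_{t\ge 0}(-\nu t+\eta w_k(t))$ is exponentially distributed whenever $\nu>0$, applied to $\log\big(u_k(t)^2/(u_0,h_k)_H^2\big)$, to get $\bE\big[\sup_{t\in[0,T]}u_k(t)^2\big]\le C_k\,\bE[(u_0,h_k)_H^2]$.

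The role of \eqref{eq:mainAssumption} is that for $k\ge N_0$ it yields simultaneously $2\theta\mu_k-\sigma^2 q_k^2\ge c\mu_k>0$ and $\sigma^2 q_k^2/\mu_k\le 2\theta-c$, so that $\mu_k\int_0^T\bE[u_k(t)^2]\,\dif t\le c^{-1}\bE[(u_0,h_k)_H^2]$ and the exponential-law constant $C_k$ is bounded uniformly in $k\ge N_0$. The finitely many modes $k<N_0$ contribute, over the finite horizon $[0,T]$, only a bounded multiple of $\sum_{k<N_0}\bE[(u_0,h_k)_H^2]$ (here one bounds the running maximum crudely by dropping the nonpositive drift in the exponent). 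Summing over $k$ and using completeness of $L^2(\Omega;C([0,T];H))$ for the partial sums $\sum_{k\le n}u_k(t)h_k$, I would conclude that $u(t):=\sum_{k}u_k(t)h_k$ lies in $L^2(\Omega;C([0,T];H))\cap L^2(\Omega\times(0,T);H^{1/2})$ and satisfies the stated a priori estimate with $\sum_k\bE[(u_0,h_k)_H^2]=\bE\|u_0\|_H^2$ on the right.

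It then remains to verify that this $u$ is a genuine solution and the only one. For existence, pairing the candidate identity with each $h_j$ reduces mode by mode to the scalar SDE already solved by $u_j$, so the identity holds in $H^{-1/2}$ once the above estimates are used to check that $\theta\int_0^t Au(s)\,\dif s$ defines a legitimate $H^{-1/2}$-valued process ($\bE\int_0^T\|u(s)\|_{1/2}^2\,\dif s<\infty$) and that the series of stochastic integrals $\sum_k \sigma q_k\big(\int_0^t u_k(s)\,\dif w_k(s)\big)h_k$ converges in $L^2(\Omega;H)$ (again using $q_k^2\le (2\theta-c)\sigma^{-2}\mu_k$ for $k\ge N_0$). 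For uniqueness, the modes of any solution solve the same scalar linear SDE with the same initial datum, and pathwise uniqueness for linear SDEs forces them to coincide a.s., hence the solutions coincide. The only step requiring genuine care rather than routine computation is this last reconciliation of the mode-wise construction with the functional-analytic notion of solution; this is precisely the point where one may instead appeal to \cite[Section~4.4]{LototskyRozovsky2017Book}.
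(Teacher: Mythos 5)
Your argument is correct, but it takes a genuinely different route from the paper's. The paper proves Theorem \ref{thm:EnU} in one stroke by verifying the parabolicity (coercivity) condition
\[
-2\theta[Av,v]+\sum_{k}\sigma^2q_k^2v_k^2+c_A\|v\|_{1/2}^2\le M\|v\|_H^2
\]
and then invoking \cite[Theorem 4.4.3]{LototskyRozovsky2017Book}; the hypothesis \eqref{eq:mainAssumption} appears there as exactly what makes $2\theta-\sigma^2 q_k^2/\mu_k-c_A\ge 0$ for $c_A=c$ and $k\ge N_0$, with $\mu_k\to\infty$ absorbing the finitely many small modes into $M\|v\|_H^2$. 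Your route is the direct, mode-by-mode computation: solving each geometric Brownian motion explicitly, computing $\bE[u_k(t)^2]=\bE[u_k(0)^2]e^{-(2\theta\mu_k-\sigma^2q_k^2)t}$, and controlling $\bE\sup_t u_k(t)^2$ via the exponential law of $\sup_{t\ge0}(-\nu t+\eta w(t))$. This is precisely the alternative the paper alludes to just after displaying \eqref{eq:Soluk} (``one could use directly the above form of $u_k$'s to prove the well-posedness''), but does not carry out. Your computations check out: under \eqref{eq:mainAssumption}, the rate parameter $\lambda_k=(2\theta\mu_k+\sigma^2q_k^2)/(2\sigma^2q_k^2)$ exceeds $1$, and $\lambda_k/(\lambda_k-1)\le 1+2(2\theta-c)/c$ uniformly in $k\ge N_0$, while $\mu_k\int_0^T\bE[u_k(t)^2]\,\dif t\le c^{-1}\bE[u_k(0)^2]$; the finitely many low modes are harmless over $[0,T]$. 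The paper's approach is shorter and relies on the abstract variational theory (and in particular immediately furnishes the solution concept and uniqueness); your approach is more elementary, yields the closed-form representation \eqref{eq:Soluk} and explicit constants as a byproduct, and makes the role of \eqref{eq:mainAssumption} transparent, at the cost of the final reconciliation with the functional-analytic notion of solution — which you correctly flag as the one nonroutine step, and for which falling back on \cite[Section 4.4]{LototskyRozovsky2017Book} is an acceptable shortcut.
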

\begin{proof} In view of \cite[Theorem 4.4.3]{LototskyRozovsky2017Book},  it is enough to show that the following parabolicity condition holds true,
\begin{align}\label{eq:parabolicity}
-2\theta[Av,v] + \sum_{k=1}^\infty\sigma^2 q_k^2 v_k^2 + c_A\|v\|_{\frac12}^2 \le M\|v\|_H^2,\quad v\in H^{\frac12},
\end{align}
for some positive constant $c_A$ and $M$. Note that, $[Av,v]=\|v\|_{\frac12}^2= \sum_{k=1}^\infty \mu_k v_k^2$, and thus, the left hand side of~\eqref{eq:parabolicity} writes
\begin{align*}
\sum_{k=1}^\infty \left(-(2\theta+c_A) \mu_k + \sigma^2 q_k^2 \right)v_k^2 = -\sum_{k=1}^\infty \mu_k\left(2\theta-\sigma^2\frac{q_k^2}{\mu_k}-c_A\right)v_k^2.
\end{align*}
We put $c_A = c$, and by taking into account \eqref{eq:mainAssumption}, as well as the fact that $\mu_k\to\infty$, the condition~\eqref{eq:parabolicity} follows at once.
\end{proof}

\begin{remark}\label{rem:condExist} It is worth mentioning two simple examples of system's parameters that satisfy~\eqref{eq:mainAssumption}:
\begin{enumerate}[(E1)]
  \item there exist $\varepsilon>0$, and  $C>0$, such that $q_k^2\le C\mu_k^{1-\varepsilon}$, for sufficiently large $k$. 
\item $2\theta - \sigma^2>0$, and $q_k^2\le\mu_k$, for sufficiently large $k$. 
\end{enumerate}
\end{remark}

Although the equation \eqref{eq:mainSPDE} is driven by a multiplicative noise, due to the special structure of the noise, it is a diagonalizable SPDE, namely the Fourier modes of the solution satisfy an infinite dimensional system of decoupled equations
\begin{align}\label{eq:SDEuk}
\dif u_{k}(t)+\theta\mu_{k}u_{k}(t)\dif t=\sigma q_{k}u_{k}(t)\dif w_{k}(t),\quad t\in[0,T],\,\,k\in\bN,
\end{align}
with initial condition $u_{k}(0)=(u_{0},h_{k})_{H}$. Hence, we have that
\begin{align}\label{eq:Soluk}
u_{k}(t)=u_{k}(0)\exp\left(-\bigg(\theta\mu_{k}+\frac{1}{2}\sigma^{2}q_{k}^{2}\bigg)t+\sigma q_{k}w_{k}(t)\right),\quad t\in[0,T],\,\,k\in\bN.
\end{align}
Without loss of generality we will assume that $u_{k}(0)\neq 0$, for all $k\in\bN$. We note that, one could use directly the above form of $u_k$'s to prove the well-posedness of \eqref{eq:mainSPDE}.

\subsection{Statistical model and its basic properties}
We will take the continuous-time observation framework by assuming that the solution $u$, as an object in $H$, or in a finite dimensional projection of $H$, is observed continuously in time for all $t\in[0,T]$, and for some fixed horizon $T$. We assume that $\sigma$, and $q_{k}$, $k\in\bN$, are known constants, since generally speaking under the continuous-time sampling scheme, using quadratic variation arguments, these parameters can be found exactly. We will be interested in estimating the unknown parameter $\theta\in\bR_{+}$, with $\theta_{0}$ being the true value of this parameter of interest. In what follows we will denote by $u^{\theta}$ the solution to \eqref{eq:mainSPDE} that corresponds to the parameter $\theta$, and correspondingly, we put $u_{k}^{\theta}:=(u^{\theta},h_{k})_{H}$, $k\in\bN$. If no confusion arises, we will continue to write $u$ and $u_{k}$ instead of $u^{\theta_{0}}$ and $u^{\theta_{0}}_{k}$.

We will also assume that $q_{k}\neq 0$ for all $k\in\bN$. If $q_{k}=0$ for some $k\in\bN$, then $\theta$ can be found exactly, and the considered statistical problem becomes trivial.

In this study, we will assume that one path of the first $N$ Fourier modes $(u_{1}(t),\ldots,u_{N}(t))$ is observed continuously over a fixed time interval $[0,T]$, for some $T>0$. We will focus on the asymptotic properties in large number of Fourier modes, $N\rightarrow\infty$, while $T$ is fixed. The large time asymptotics $T\rightarrow\infty$, with $N$ fixed, reduces to existing results for finite dimensional systems of stochastic differential equations, which is well understood. The mixed case with both $N,T\rightarrow\infty$ is left for further studies.

We begin by placing the considered statistical model within classical  asymptotic theory of statistical estimation.
Let $C([0,T];\bR^{N})$ denote the space of all $\bR^{N}$-valued continuous functions on $[0,T]$. The cylindrical (Borel) $\sigma$-field on $C([0,T];\bR^{N})$ is denoted by $\cB(C([0,T];\bR^{N}))$. For every $\theta\in\bR_{+}$, let $\bP^{\theta}_{N}$ be the probability measure on $(C([0,T];\bR^{N}),\cB(C([0,T];\bR^{N}))$ induced by the projected solution $U^{\theta}_{N}:=\{(u^\theta_{1}(t),\ldots,u^{\theta}_{N}(t)),\,t\in[0,T]\}$. As before, we simply write $U_{N}$ and $\bP_{N}$ instead of $U_{N}^{\theta_{0}}$ and $\bP^{\theta_{0}}_{N}$. The measures $\bP^{\theta}_{N}$ and $\bP_{N}$ are equivalent, and the Likelihood Ratio, or the Radon-Nikodym derivative, is given by (cf. \cite[Section 7.6.4]{LiptserShiryayev2000})
\begin{align}\label{eq:LogLikelihood}
\frac{\dif\bP^{\theta}_{N}}{\dif\bP_{N}}({U}_{N})=\exp\bigg(\frac{\theta_{0}-\theta}{\sigma^{2}}\sum_{k=1}^{N}\mu_{k}q_{k}^{-2}\!\int_{0}^{T}\frac{\dif u_{k}(t)}{u_{k}(t)}+\frac{\big(\theta_{0}^{2}-\theta^{2}\big)T}{2\sigma^{2}}\sum_{k=1}^{N}\mu_{k}^{2}q_{k}^{-2}\bigg).
\end{align}
The Fisher information at $\theta_{0}$ is then given by
\begin{align}\label{eq:FisherInfo}
I_{N}=I_{N}(\theta_{0}):=\bE\left(\bigg(\frac{\partial}{\partial\theta}\ln\frac{\dif\bP^{\theta}_{N}}{\dif\bP_{N}}(U_{N})\bigg|_{\theta=\theta_{0}}\bigg)^{2}\right)=\frac{T}{\sigma^{2}}\sum_{k=1}^{N}\mu_{k}^{2}q_{k}^{-2},
\end{align}
which is, in particular, independent of $\theta_{0}$.

Next we will show that the statistical model $\sE:=\{C([0,T];\bR^{N}),\cB(C([0,T];\bR^{N})),\bP_{N}^{\theta},\theta\in\bR_{+}\}$ is regular; cf. \cite[Section I.7]{IbragimovKhasminskiiBook1981}.

\begin{theorem}\label{thm:Regularity}
The statistical model $\sE$ is regular in $\bR_{+}$.
\end{theorem}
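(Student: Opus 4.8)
The plan is to verify the defining conditions of a regular statistical experiment in the sense of \cite[Section~I.7]{IbragimovKhasminskiiBook1981}: the family $\{\bP_N^\theta:\theta\in\bR_+\}$ is dominated by a $\sigma$-finite measure, with a common density $f_N(\cdot,\theta)$; the map $\theta\mapsto\sqrt{f_N(\cdot,\theta)}$ is differentiable in mean square, with $L^2$-continuous derivative; and the Fisher information is finite, strictly positive, and continuous on the open set $\bR_+$. I take $\bP_N$ as the dominating measure. By \eqref{eq:LogLikelihood}, $f_N(U_N,\theta):=\dif\bP_N^\theta/\dif\bP_N(U_N)=\exp\big(\ell_N(\theta)\big)$, where
\begin{gather*}
\ell_N(\theta)=\frac{\theta_0-\theta}{\sigma^2}\,\xi_N+\frac{(\theta_0^2-\theta^2)T}{2\sigma^2}\,S_N,\\
\xi_N:=\sum_{k=1}^N\mu_kq_k^{-2}\!\int_0^T\frac{\dif u_k(t)}{u_k(t)},\qquad S_N:=\sum_{k=1}^N\mu_k^2q_k^{-2}.
\end{gather*}
Since, under $\bP_N^\theta$, one has $\dif u_k/u_k=-\theta\mu_k\,\dif t+\sigma q_k\,\dif\widetilde w_k$ for a $\bP_N^\theta$-Brownian motion $\widetilde w_k$ (by \eqref{eq:SDEuk}), the statistic $\xi_N$ is Gaussian under $\bP_N^\theta$, with mean $-\theta TS_N$ and variance $\sigma^2TS_N$; moreover $\xi_N$ is a sufficient statistic, since the likelihood ratios depend on $U_N$ only through $\xi_N$, so $\sE$ reduces to the Gaussian location experiment $\{\mathcal N(-\theta TS_N,\sigma^2TS_N):\theta\in\bR_+\}$, and $\ell_N(\cdot)$ is a quadratic polynomial in $\theta$ whose leading coefficient $-TS_N/(2\sigma^2)$ is deterministic.

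For mean-square differentiability, set $\psi_N(\theta):=\sqrt{f_N(U_N,\theta)}=\exp(\ell_N(\theta)/2)$; the candidate derivative is $\dot\psi_N(\theta)=\tfrac12\ell_N'(\theta)\exp(\ell_N(\theta)/2)$ with $\ell_N'(\theta)=-\sigma^{-2}\xi_N-\theta T\sigma^{-2}S_N$ and $\ell_N''(\theta)=-T\sigma^{-2}S_N$ deterministic. Completing the square in $\ell_N$ exhibits $\exp(\ell_N(\theta))$, locally uniformly in $\theta$, as the exponential of a Gaussian random variable, which therefore has finite moments of all orders under $\bP_N$; hence, for every compact $K\subset\bR_+$,
\[
\bE_{\bP_N}\Big[\sup_{\theta\in K}\big(1+\ell_N'(\theta)^2+|\ell_N''(\theta)|\big)\exp(\ell_N(\theta))\Big]<\infty .
\]
A second-order Taylor expansion of $\theta\mapsto\psi_N(\theta)$ together with this domination gives $\big\|\psi_N(\theta+h)-\psi_N(\theta)-h\dot\psi_N(\theta)\big\|_{L^2(\bP_N)}=o(|h|)$ as $h\to0$, and the same bound yields continuity of $\theta\mapsto\dot\psi_N(\theta)$ in $L^2(\bP_N)$ by dominated convergence. (Alternatively, this is immediate from the Gaussian-location description above.)

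For the Fisher information, $\int f_N(U_N,\theta)\,\dif\bP_N=1$ for all $\theta$, so differentiating under the integral sign---justified by the same domination---shows the score $\ell_N'(\theta)$ has zero mean under $\bP_N^\theta$, and therefore
\[
I_N(\theta)=4\big\|\dot\psi_N(\theta)\big\|_{L^2(\bP_N)}^2=\bE_{\bP_N^\theta}\big[\ell_N'(\theta)^2\big]=\sigma^{-4}\operatorname{Var}_{\bP_N^\theta}(\xi_N)=\frac{T}{\sigma^2}\sum_{k=1}^N\mu_k^2q_k^{-2},
\]
which recovers \eqref{eq:FisherInfo}. Since $\mu_k>0$ and $q_k\neq0$, this is a finite, strictly positive constant, independent of $\theta$, hence trivially continuous and nonsingular on $\bR_+$. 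Combining the three conditions shows that $\sE$ is regular.

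The only step with genuine technical content is the mean-square differentiability of $\sqrt{f_N(\cdot,\theta)}$; but since $\ell_N$ is quadratic in $\theta$ with deterministic leading coefficient and $\xi_N$ is Gaussian under every $\bP_N^\theta$---equivalently, since $\sE$ collapses to a Gaussian location experiment---the required Taylor-remainder and local-uniform-integrability estimates reduce to standard Gaussian moment bounds, so I do not anticipate a real obstacle.
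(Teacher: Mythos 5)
Your proof is correct and follows essentially the same route as the paper: both verify the Ibragimov--Khasminskii regularity criteria by exploiting the explicit exponential-quadratic form of the likelihood ratio \eqref{eq:LogLikelihood}, establishing differentiability of $\theta\mapsto\sqrt{\dif\bP_N^\theta/\dif\bP_N}$ and $L^2$-continuity of its derivative via a uniform-in-$\theta$ moment bound and dominated convergence, and reading off the Fisher information. The one modest difference is presentational: you make explicit that, via the sufficient statistic $\xi_N$, the experiment collapses to a Gaussian location family and that $\ell_N$ is quadratic with deterministic negative leading coefficient, which packages the domination step more conceptually (and would, in fact, let you cite regularity of Gaussian location families outright); the paper instead writes down the pointwise derivative $\psi(U_N,\theta)$ and an explicit integrable majorant for $(\psi(U_N,\theta)-\psi(U_N,\theta'))^2$ when $|\theta-\theta'|\le 1$, then concludes by dominated convergence. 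Both arguments rest on the same Gaussian-exponential moment facts, so there is no substantive gap.
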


\begin{proof} We will follow \cite[Section I.7]{IbragimovKhasminskiiBook1981}, and show that
\begin{itemize}
\item [(a)] $\dif\bP^{\theta}_{N}/\dif\bP_{N}(U_{N})$ is a continuous function of $\theta\in\bR_{+}$, $\bP$-almost surely;
\item [(b)] $\{\bP_{N}^{\theta},\theta\in\bR_{+}\}$ has finite Fisher information for each $\theta\in\bR_{+}$;
\item [(c)] The function $\psi(U_{N},\theta):=\partial\sqrt{\dif\bP^{\theta}_{N}/\dif\bP_{N}(U_{N})}/\partial\theta$ is continuous in $L^{2}(\Omega;\bP)$.
\end{itemize}
In view of \eqref{eq:LogLikelihood} and \eqref{eq:FisherInfo}, $\{\bP_{N}^{\theta},\theta\in\bR_{+}\}$ clearly satisfies properties (a) and (b).
To prove (c), by \eqref{eq:LogLikelihood} we first note that $\sqrt{\dif\bP^{\theta}_{N}/\dif\bP_{N}(U_{N})}$ is continuously differentiable with respect to $\theta$ in $\bR_{+}$, for every $\omega\in\Omega$, and
\begin{align*}
\psi(U_{N},\theta)=\bigg(-\frac{1}{2\sigma^{2}}\sum_{k=1}^{N}\mu_{k}q_{k}^{-2}\int_{0}^{T}\frac{\dif u_{k}(t)}{u_{k}(t)}-\frac{1}{2\sigma^{2}}\sum_{k=1}^{N}\mu_{k}^{2}q_{k}^{-2}\bigg)\sqrt{\frac{\dif\bP^{\theta}_{N}}{\dif\bP_{N}}({U}_{N})}
\end{align*}
is continuous with respect to $\theta$ in $\bR_{+}$ for every $\omega\in\Omega$. Moreover, for any $\theta,\theta'\in\bR_{+}$ with $|\theta-\theta'|\leq 1$,
\begin{align*}
&\big(\psi(U_{N},\theta)-\psi(U_{N},\theta')\big)^{2}\leq 2\big(\psi^{2}(U_{N},\theta)+\psi^{2}(U_{N},\theta')\big)\\
&\quad\leq\bigg(-\frac{1}{\sigma^2}\sum_{k=1}^{N}\frac{\mu_{k}}{q_{k}^{2}}\int_{0}^{T}\frac{\dif u_{k}(t)}{u_{k}(t)}-\frac{1}{\sigma^2}\sum_{k=1}^{N}\frac{\mu_{k}^{2}}{q_{k}^{2}}\bigg)^{2}\exp\bigg(\frac{\theta+1}{\sigma^{2}}\sum_{k=1}^{N}\frac{\mu_{k}}{q_{k}^{2}}\left|\int_{0}^{T}\frac{\dif u_{k}(t)}{u_{k}(t)}\right|-\frac{\theta_{0}^{2}T}{2\sigma^{2}}\sum_{k=1}^{N}\frac{\mu_{k}^{2}}{q_{k}^{2}}\bigg),
\end{align*}
which is integrable with respect to $\bP$ in view of \eqref{eq:SDEuk}. Therefore, by the dominated convergence theorem,
\begin{align*}
\lim_{\theta'\rightarrow\theta}\bE\Big(\big(\psi(U_{N},\theta)-\psi(U_{N},\theta')\big)^{2}\Big)=0,
\end{align*}
which completes the proof of property (c), and thus concludes the proof of the theorem.
\end{proof}

It turns out that the statistical model  $\sE$, being regular, fits also nicely in the general framework of \textit{Local Asymptotic Normality} (LAN).
As next result shows,  $\sE$ is actually \textit{uniformly asymptotically normal}; cf. \cite[Definition II.2.2]{IbragimovKhasminskiiBook1981}).
This fundamental property will allow to show that maximum likelihood estimator and Bayesian estimators for $\theta$ are not only consistent and asymptotically normal, but also asymptotically efficient and asymptotically equivalent. See Section~\ref{sec:MLE}, Section~\ref{sec:BayeEst} and discussions therein on these theoretical aspects, as well as the comparison between them and those developed and proposed in this paper without using LAN framework.

\begin{theorem}\label{thm:UAN}
The family $\{\bP_{N}^{\theta},\theta\in\bR_{+}\}$ is uniformly asymptotically normal.
\end{theorem}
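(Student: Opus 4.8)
The plan is to verify directly the conditions of \cite[Definition II.2.2]{IbragimovKhasminskiiBook1981}. The key observation is that, because of the explicit representation~\eqref{eq:Soluk} of the Fourier modes, the model enjoys an \emph{exact} (not merely asymptotic) local asymptotic normality structure: the log-likelihood is exactly quadratic in the local parameter, with a standard Gaussian linear term, an identically vanishing remainder, and a deterministic normalization that does not even depend on $\theta$. Once this is established, every uniformity-in-$\theta$ requirement in the definition becomes vacuous.

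First I would fix the normalization. Put $S_N:=\sum_{k=1}^N\mu_k^2q_k^{-2}$, so that $I_N=T\sigma^{-2}S_N$ by~\eqref{eq:FisherInfo}, and take $\varphi_N:=I_N^{-1/2}=\sigma(TS_N)^{-1/2}$, a nonrandom constant independent of $\theta$. In view of~\eqref{eq:mainAssumption} one has $q_k^2\le C\mu_k$ for all large $k$, hence $\mu_k^2q_k^{-2}\ge C^{-1}\mu_k\to\infty$, so $S_N\to\infty$ and $\varphi_N\to0$. Next, for an arbitrary base point $\theta\in\bR_+$, I would write the likelihood ratio $\dif\bP_N^{\vartheta}/\dif\bP_N^{\theta}$ in the form~\eqref{eq:LogLikelihood} with $\theta$ in the role of $\theta_0$, substitute $\dif u_k^\theta(t)/u_k^\theta(t)=-\theta\mu_k\,\dif t+\sigma q_k\,\dif w_k(t)$ from~\eqref{eq:SDEuk}, carry out the integration, and set $\vartheta=\theta+\varphi_N u$. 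A direct computation collapses the exponent to $u\,\Delta_N(\theta)-u^2/2$ — the deterministic drift terms cancel against part of the quadratic contribution, the surviving linear coefficient is the normalized Wiener term, and the quadratic coefficient equals exactly $-1/2$ by the choice of $\varphi_N$:
\begin{align*}
\frac{\dif\bP_N^{\theta+\varphi_N u}}{\dif\bP_N^{\theta}}\big(U_N^\theta\big)=\exp\!\left(u\,\Delta_N(\theta)-\frac{u^2}{2}\right),\qquad\Delta_N(\theta):=-\frac{1}{\sqrt{TS_N}}\sum_{k=1}^N\frac{\mu_k}{q_k}\,w_k(T).
\end{align*}
Since $w_1(T),\dots,w_N(T)$ are i.i.d.\ $\cN(0,T)$, the random variable $\Delta_N(\theta)$ is \emph{exactly} $\cN(0,1)$-distributed under $\bP_N^\theta$, for every $N$ and every $\theta$; it is moreover a measurable functional of the observed path, since $\sigma q_kw_k(T)=\int_0^T\dif u_k^\theta(t)/u_k^\theta(t)+\theta\mu_kT$.

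Finally I would match these facts against the definition: the display above is the required representation with identically vanishing remainder $\psi_N(u,\theta)\equiv0$; the law of $\Delta_N(\theta)$ under $\bP_N^\theta$ is literally $\cN(0,1)$, so it converges to $\cN(0,1)$ uniformly in $\theta$ in whatever sense is demanded; and $\varphi_N$ is a constant tending to $0$, so the continuity and comparison conditions on $\theta\mapsto\varphi_N(\theta)$ hold trivially. (If the stronger estimates that feed the Ibragimov--Khasminskii method are wanted downstream, they too are immediate from the Gaussian form: under $\bP_N^\theta$ one has $\bE[(\dif\bP_N^{\theta+\varphi_N u}/\dif\bP_N^{\theta})^{1/2}]=e^{-u^2/8}$, and the square-root likelihood is Lipschitz in the local parameter.) I do not anticipate a real obstacle; the only delicate point is the bookkeeping that makes the quadratic coefficient come out exactly $-u^2/2$ and $\Delta_N(\theta)$ exactly standard normal, i.e.\ aligning $\varphi_N$ with the Fisher information.
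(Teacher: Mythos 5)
Your proposal is correct and follows essentially the same route as the paper: you compute the likelihood ratio explicitly from \eqref{eq:LogLikelihood} and \eqref{eq:SDEuk}, observe that with $\varphi_N=I_N^{-1/2}$ the exponent becomes exactly $u\,\Delta_N(\theta)-u^2/2$ with $\Delta_N(\theta)\ed\cN(0,1)$ for every $N$ and $\theta$, and then verify the Ibragimov--Khasminskii Definition II.2.2. The paper phrases this by taking arbitrary sequences $\vartheta_N$, $\eta_N\to\eta$ and invoking Slutsky's theorem, while you emphasize that the remainder vanishes identically and $\Delta_N(\theta)$ is exactly standard normal so that uniformity in $\theta$ is vacuous; this is a presentational difference only, not a different argument.
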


\begin{proof} For reader's convenience, we first match the notations in \cite[Definition II.2.2]{IbragimovKhasminskiiBook1981} with those from our model: the perturbation variable $\varepsilon_N$ will be $1/N$, and $\varphi(\varepsilon_N,t)=1/\sqrt{I_{N}}$. For any sequences $\{\vartheta_{N}\}_{N\in\bN}\subset\bR_{+}$ and $\{\eta_{N}\}_{N\in\bN}\subset\bR$ such that $\eta_{N}\rightarrow\eta\in\bR$, as $N\rightarrow\infty$, and that $\theta_{N}:=\vartheta_{N}+\eta_{N}/\sqrt{I_{N}}\in\bR_{+}$ for all $N\in\bN$, by \eqref{eq:LogLikelihood} and \eqref{eq:FisherInfo} we have
\begin{align*}
\frac{\dif\bP^{\theta_{N}}_{N}}{\dif\bP^{\vartheta_{N}}_{N}}\big(U_{N}^{\vartheta_{N}}\big)&=\exp\left(\frac{\vartheta_{N}-\theta_{N}}{\sigma^{2}}\sum_{k=1}^{N}\mu_{k}q_{k}^{-2}\int_{0}^{T}\frac{\dif u_{k}^{\vartheta}(t)}{u_{k}^{\vartheta}(t)}+\frac{\big(\vartheta_{N}^{2}-\theta_{N}^{2}\big)T}{2\sigma^{2}}\sum_{k=1}^{N}\mu_{k}q_{k}^{-2}\right)\\
&=\exp\bigg(\frac{-\eta_{N}}{\sigma^{2}\sqrt{I_{N}}}\sum_{k=1}^{N}\mu_{k}q_{k}^{-2}\int_{0}^{T}\frac{\dif u_{k}^{\vartheta_{N}}(t)}{u_{k}^{\vartheta_{N}}(t)}-\frac{\eta_{N}^{2}}{2}-\vartheta_{N}\eta_{N}\sqrt{I_{N}}\bigg)\\
&=\exp\left(-\eta_{N}\bigg(\frac{1}{\sigma^{2}\sqrt{I_{N}}}\sum_{k=1}^{N}\mu_{k}q_{k}^{-2}\int_{0}^{T}\frac{\dif u_{k}^{\vartheta_{N}}(t)}{u_{k}^{\vartheta_{N}}(t)}+\vartheta_{N}\sqrt{I_{N}}\bigg)-\frac{\eta^{2}}{2}+\bigg(\frac{\eta^{2}}{2}-\frac{\eta_{N}^{2}}{2}\bigg)\right).
\end{align*}
Clearly $\eta^{2}/2-\eta_{N}^{2}/2\rightarrow 0$, as $N\rightarrow\infty$. It remains to show that $\eta_{N}\xi_{N}$ converges in distribution to some centered normal distribution, as $N\rightarrow\infty$, where
\begin{align}\label{eq:xiN}
\xi_{N}:= -\frac{1}{\sigma^{2}\sqrt{I_{N}}}\sum_{k=1}^{N}\mu_{k}q_{k}^{-2}\int_{0}^{T}\frac{\dif u_{k}^{\theta}(t)}{u_{k}^{\theta}(t)}+\theta\sqrt{I_{N}}= -\frac{1}{\sigma\sqrt{I_{N}}}\sum_{k=1}^{N}u_{k}q_{k}^{-1}w_{k}(T).
\end{align}
Clearly, $\xi_{N}\ed\cN(0,1)$ under $\bP$, and therefore, by Slutsky's theorem, $\eta_{N}\xi_{N}\cd\cN(0,\eta^{2})$ under $\bP$, as $N\rightarrow\infty$, which completes the proof.
\end{proof}

\section{Maximum Likelihood Estimator}\label{sec:MLE}

In this section, we will investigate the asymptotic properties of the \textit{maximum likelihood estimator}  for the unknown parameter $\theta$. The obtained results, albeit simple, are important on their own, and we will also use them later to study the Bayesian estimators in Section~\ref{sec:BayeEst}.

By maximizing the likelihood ratio in \eqref{eq:LogLikelihood} with respect to $\theta$, we obtain the following estimator for $\theta$
\begin{equation}\label{eq:MLEIni1}
\wh{\theta}_{N}:=-\frac{\sum_{k=1}^{N}\mu_{k}q_{k}^{-2}\int_{0}^{T}\frac{\dif u_{k}(t)}{u_{k}(t)}}{T\sum_{k=1}^{N}\mu_{k}^{2}q_{k}^{-2}}.
\end{equation}
Using \eqref{eq:SDEuk}, by It\^o's formula, the estimator $\wh\theta_N$ can also be  written as
\begin{equation}\label{eq:MLEIni2}
\wh{\theta}_{N}:=-\frac{\sum_{k=1}^{N}\mu_{k}\big(q_{k}^{-2}\log(u_k(T)/u_k(0)) + \frac{1}{2} \sigma^2T\big)}{T\sum_{k=1}^{N}\mu_{k}^{2}q_{k}^{-2}},
\end{equation}
which can be useful for practical purposes.

Since $\theta\in\bR_{+}$, formally, the MLE for $\theta$ is given by
\begin{align}\label{eq:MLE}
\wh{\theta}_{N}^{\,\text{MLE}}:=\1_{\{\wh{\theta}_{N}\in\bR_{+}\}}\wh{\theta}_{N}.
\end{align}

Before we start the analysis on the asymptotic properties of the MLE, let us first introduce the following classes of loss functions, which will be used in the statement of asymptotic efficiency of MLE,  as well as in the later discussions on Bayesian estimators.

\begin{definition}\label{def:LossFunt}
Let $\mathbf{W}$ be the set of Borel measurable \textit{loss functions} $\ell:\bR\rightarrow[0,\infty)$ such that
\begin{itemize}
\item [(i)] $\ell$ is symmetric on $\bR$ and is non-decreasing on $[0,\infty)$;
\item [(ii)] $\ell(0)=0$ and $\ell$ is continuous at $x=0$, but is not identically 0.
\end{itemize}
Denote by $\mathbf{W}_{e,2}$ the set of functions $\ell\in\mathbf{W}$ whose growth as $|x|\rightarrow\infty$ is bounded by one of the functions $e^{c|x|^{r}}$ with $c\in\bR_{+}$ and $r\in(0,2)$, i.e. loss functions with `exponential' growth. The set of functions $\ell\in\mathbf{W}$ which possess a polynomial majorant as $|x|\rightarrow\infty$ will be denoted by $\mathbf{W}_{p}$.

We will denote by $\mathbf{W}'$ the class of loss functions $\ell$  that are non-negative, Borel-measurable, and locally bounded function on $\bR$, with $\ell(0)=0$, such that
\begin{align}\label{eq:MonotoneLossFunt}
\ell(x_{1})\leq\ell(x_{2}),\quad\text{for any }\,x_{1},x_{2}\in\bR\,\,\,\text{with}\,\,\,|x_{1}|\leq|x_{2}|.
\end{align}
Clearly, $\mathbf{W}\subset \mathbf{W}'$.
\end{definition}

The following result is a consequence of \cite[Theorem III.1.1, Theorem III.1.2, Corollary III.1.1]{IbragimovKhasminskiiBook1981}, which summaries the uniform weak consistency, the uniform asymptotic normality, and the asymptotic efficiency of $\wh{\theta}_{N}^{\,\text{MLE}}$.

\begin{theorem}\label{thm:MLEUAN}
Let $B\subset\bR_{+}$ be any compact set. Then, the following asymptotic properties of $\wh{\theta}^{\,\text{MLE}}_{N}$ hold true.
\begin{itemize}
\item[(a)] $\wh{\theta}^{\,\text{MLE}}_{N}$ is weakly consistent, uniformly in $\theta_{0}\in B$. That is, for any $\varepsilon>0$,
    \begin{align*}
    \lim_{N\rightarrow\infty}\sup_{\theta_{0}\in B}\bP\Big(\big|\wh{\theta}^{\,\text{MLE}}_{N}-\theta_{0}\big|>\varepsilon\Big)=0,
    \end{align*}
\item[(b)] $\wh{\theta}^{\,\text{MLE}}_{N}$ is asymptotically normal with parameter $(\theta_{0},I_{N}^{-1})$, uniformly in $\theta_{0}\in B$, where $I_{N}$ is given by \eqref{eq:FisherInfo}. That is, uniformly in $\theta_{0}\in B$,
    \begin{align*}
    \sqrt{I_{N}}\big(\wh{\theta}^{\,\text{MLE}}_{N}-\theta_{0}\big)\cd\cN(0,1),\quad N\rightarrow\infty.
    \end{align*}
\item[(c)] For any $\varepsilon>0$,
    \begin{align*}
    \lim_{N\rightarrow\infty}\sup_{\theta_{0}\in B}\bP\Big(\Big|\sqrt{I_{N}}\big(\wh{\theta}^{\,\text{MLE}}_{N}-\theta_{0}\big)-\xi_{N}\Big|>\varepsilon\Big)=0,
    \end{align*}
    where $\xi_{N}$ is defined as in \eqref{eq:xiN}.
\item[(d)] For any $\varpi\in\mathbf{W}_{p}$, $\wh{\theta}^{\,\text{MLE}}_{N}$ is asymptoticly efficient in $B$ for the loss function $\varpi_{N}(x):=\varpi(\sqrt{I_{N}}x)$. That is, for any $\theta_{0}\in B$,
    \begin{align*}
    \lim_{\delta\rightarrow 0+}\liminf_{N\rightarrow\infty}\sup_{|\theta-\theta_{0}|<\delta}\bE\Big(\varpi_{N}\big(\wh{\theta}_{N}^{\,\text{MLE}}-\theta\big)\Big)=\bE(\varpi(\xi)),
    \end{align*}
    where $\xi\ed\cN(0,1)$ under $\bP$.
\end{itemize}
\end{theorem}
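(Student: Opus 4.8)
The plan is to derive Theorem~\ref{thm:MLEUAN} directly from the three cited results of Ibragimov--Khasminskii, \cite[Theorem III.1.1, Theorem III.1.2, Corollary III.1.1]{IbragimovKhasminskiiBook1981}, by verifying that their hypotheses are met in our setting. The crucial structural input has already been established: Theorem~\ref{thm:Regularity} shows the model $\sE$ is regular, and Theorem~\ref{thm:UAN} shows the family $\{\bP_N^\theta\}$ is uniformly asymptotically normal with normalizing factor $\varphi(\varepsilon_N,\theta)=1/\sqrt{I_N}$ and limiting random variable $\xi\ed\cN(0,1)$. Since the Ibragimov--Khasminskii theory for MLE in the i.i.d.-type / LAN setting delivers exactly statements (a)--(d) once these regularity and uniform asymptotic normality conditions hold, the bulk of the work is organizational: confirming that the remaining technical conditions in \cite[Ch. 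III]{IbragimovKhasminskiiBook1981} (e.g. the uniform-in-$\theta$ moment/tail bounds on the normalized likelihood ratio, often phrased via an exponential bound or a bound on the Hellinger-type distance) are satisfied.

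The key steps, in order, would be as follows. First I would recall the explicit form of the normalized likelihood ratio $Z_N(\eta) := \dif\bP_N^{\vartheta_N+\eta/\sqrt{I_N}}/\dif\bP_N^{\vartheta_N}(U_N^{\vartheta_N})$ computed in the proof of Theorem~\ref{thm:UAN}, namely $Z_N(\eta)=\exp(\eta\,\eta_N\xi_N - \eta^2/2 + o(1))$-type expressions, and observe that because $\xi_N\ed\cN(0,1)$ \emph{exactly} (not just asymptotically) for every $N$, one has clean Gaussian-type control: $\bE\big(Z_N^{1/2}(\eta)\big)$ and the increments $\bE\big(|Z_N^{1/2}(\eta_1)-Z_N^{1/2}(\eta_2)|^2\big)$ admit the bounds required by \cite[Theorem III.1.1]{IbragimovKhasminskiiBook1981}, uniformly over $\theta_0$ in a compact set $B$. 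Second, I would invoke \cite[Theorem III.1.1]{IbragimovKhasminskiiBook1981} to obtain uniform weak consistency, which is part (a). Third, parts (b) and (c) follow from \cite[Theorem III.1.2]{IbragimovKhasminskiiBook1981}: the theorem gives the asymptotic expansion $\sqrt{I_N}(\wh\theta_N^{\,\mathrm{MLE}}-\theta_0) = \xi_N + o_{\bP}(1)$ uniformly in $\theta_0\in B$ — this is precisely (c) — and combined with $\xi_N\ed\cN(0,1)$ yields the uniform CLT in (b). Finally, part (d) is \cite[Corollary III.1.1]{IbragimovKhasminskiiBook1981} (the asymptotic efficiency / local asymptotic minimax statement for loss functions in $\mathbf{W}_p$), applied with the rescaled loss $\varpi_N(x)=\varpi(\sqrt{I_N}x)$ and the limiting risk $\bE(\varpi(\xi))$; here one also needs the polynomial-growth restriction so that the relevant uniform integrability holds against the Gaussian tails of $Z_N$.

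The main obstacle — or rather, the only place requiring genuine care rather than bookkeeping — is verifying the uniform-in-$\theta_0$ integrability and continuity estimates on $Z_N^{1/2}$ that underlie all three cited results. Concretely, one must check an inequality of the form $\bE\big(|Z_N^{1/2}(\eta_1)-Z_N^{1/2}(\eta_2)|^2\big)\le C|\eta_1-\eta_2|^2$ with $C$ independent of $N$ and of $\theta_0\in B$, together with an exponential decay bound $\bE\big(Z_N^{1/2}(\eta)\big)\le e^{-c\eta^2}$ for $|\eta|$ large, again uniformly. Both follow from the exact representation $Z_N(\eta)=\exp(\text{Gaussian} - \tfrac12\text{variance})$ afforded by \eqref{eq:xiN} and the fact that $I_N\to\infty$ (which holds since $\mu_k\to\infty$, so $\sum_{k=1}^N\mu_k^2 q_k^{-2}\to\infty$), but one should confirm that the shift $\vartheta_N\to\theta_0$ uniformly over $B$ does not destroy the bounds — it does not, because $I_N$ and the law of $\xi_N$ do not depend on $\theta_0$ at all, as already noted after \eqref{eq:FisherInfo}. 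Once this uniformity is in hand, (a)--(d) are immediate citations, and I would present the proof simply as: ``The conditions of \cite[Theorems III.1.1, III.1.2 and Corollary III.1.1]{IbragimovKhasminskiiBook1981} are satisfied by Theorems~\ref{thm:Regularity} and~\ref{thm:UAN} together with the uniform bounds on $Z_N^{1/2}$ established above; the conclusions (a)--(d) follow.''
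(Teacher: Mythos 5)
Your proposal matches the paper's proof in every essential respect: the paper likewise reduces the theorem to verifying conditions (N1)--(N4) of \cite[Section III.1]{IbragimovKhasminskiiBook1981}, takes (N1) from Theorem~\ref{thm:UAN}, notes (N2) is trivial, and establishes (N3) and (N4) — precisely the Hellinger-type increment bound and the exponential tail bound you identify — by exploiting the exact identity $\xi_N(\theta_0)\ed\cN(0,1)$ together with the observation that neither $I_N$ nor the law of $\xi_N$ depends on $\theta_0$, so the bounds are automatically uniform over any compact $B$. The only thing the paper adds that you leave implicit is the explicit computation $\bE\big(|Z_N^{1/2}(\eta)-Z_N^{1/2}(\zeta)|^2\big)=2-2e^{-(\eta-\zeta)^2/8}\le 16|\eta-\zeta|^2/8\cdot 8=16(\eta-\zeta)^2$ and $\bE\big(Z_N^{1/2}(\eta)\big)=e^{-\eta^2/8}$, both of which are the routine Gaussian moment computations you allude to.
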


\begin{proof}
We only need to check the conditions (N1)$-$(N4) in \cite[Section III.1]{IbragimovKhasminskiiBook1981}. The condition (N1) follows from Theorem \ref{thm:UAN}, while the condition (N2) is trivial since $\varphi(\varepsilon,t)=1/\sqrt{I_{N}}$ in our case. To verify the condition (N3), for any $\theta_{0}\in B$ and $\eta,\zeta\in\bR$ such that $\eta_{N}:=\theta_{0}+\eta/\sqrt{I_{N}}\in\bR_{+}$ and $\zeta_{N}:=\theta_{0}+\zeta/\sqrt{I_{N}}\in\bR_{+}$, by \eqref{eq:LogLikelihood} and \eqref{eq:xiN} we have
\begin{align*}
\bE\left(\left(\sqrt{\frac{\dif\bP^{\eta_{N}}_{N}}{\dif\bP_{N}}(U_{N})}-\sqrt{\frac{\dif\bP^{\zeta_{N}}_{N}}{\dif\bP_{N}}(U_{N})}\right)^{2}\right)=\bE\bigg(\Big(e^{-\eta\xi_{N}(\theta_{0})/2-\eta^{2}/4}-e^{-\zeta\xi_{N}(\theta_{0})/2-\zeta^{2}/4}\Big)^{2}\bigg).
\end{align*}
Similar to the proof of Theorem~\ref{thm:UAN}, we note that $\xi_{N}(\theta_{0})\ed\cN(0,1)$ under $\bP$, and thus
\begin{align*}
(\eta-\zeta)^{-2}\,\bE\left(\left(\sqrt{\frac{\dif\bP^{\eta_{N}}_{N}}{\dif\bP_{N}}(U_{N})}-\sqrt{\frac{\dif\bP^{\zeta_{N}}_{N}}{\dif\bP_{N}}(U_{N})}\right)^{2}\right)=\frac{2-2e^{-(\eta-\zeta)^{2}/8}}{(\eta-\zeta)^{2}}\leq 16.
\end{align*}
Therefore, the condition (N3) is valid with $\beta=m=2$ and any positive constants $B$, $R$, and $\alpha$. Similarly, for any $\theta_{0}\in B$, $n,N\in\bN$, and $\eta\in\bR$ with $\eta_{N}:=\theta_{0}+\eta/\sqrt{I_{N}}\in\bR_{+}$,
\begin{align*}
|\eta|^{n}\,\bE\left(\sqrt{\frac{\dif\bP^{\eta_{N}}_{N}}{\dif\bP_{N}}(U_{N})}\right)=|\eta|^{n}\,\bE\Big(e^{-\eta\xi_{N}(\theta_{0})/2-\eta^{2}/4}\Big)\leq\sup_{\eta\in\bR_{+}}|\eta|^{n}e^{-\eta^{2}/8}<\infty.
\end{align*}
which verifies the validity of the condition (N4). The proof is complete.
\end{proof}

Given the particular form of the estimator $\wh\theta_N$, one can establish its strong consistency and asymptotic normality. Indeed, by \eqref{eq:SDEuk}, \eqref{eq:MLEIni1} can be conveniently written as
\begin{align}\label{eq:MLEIni3}
\wh{\theta}_{N}=\theta_{0}-\frac{\sigma}{T}\frac{\sum_{k=1}^{N}\mu_{k}q_{k}^{-1}w_{k}(T)}{\sum_{k=1}^{N}\mu_{k}^{2}q_{k}^{-2}}=\theta_{0}-\frac{\sigma}{T}\frac{\sum_{k=1}^{N}a_{k}}{b_{N}},
\end{align}
where $a_{k}:=\mu_{n}q_{n}^{-1}w_{n}(T)$ and $b_{n}:=\sum_{k=1}^{n}\mu_{k}^{2}q_{k}^{-2}$, $n\in\bN$. Clearly, $b_{N}\nearrow\infty$ as $N\rightarrow\infty$. Moreover, in view of Lemma \ref{lem:techlem1}, we have that
\begin{align*}
\sum_{k=1}^{\infty}\frac{\text{Var}(a_{k})}{b_{k}^{2}}<\infty.
\end{align*}
By the Law of Large Numbers, Theorem~\ref{thm:SLLN}, $\sum_{k=1}^{N}a_{k}/b_{N}\rightarrow 0$ as $N\rightarrow\infty$, and thus by \eqref{eq:MLEIni3}, we obtain that
\begin{align}\label{eq:MLEIniStrConst}
\lim_{N\rightarrow\infty}\wh{\theta}_{N}=\theta_{0},\quad\bP-\text{a.s.},
\end{align}
namely $\wh{\theta}_{N}$ is a \textit{strongly consistent} estimator of $\theta_{0}$. Together with \eqref{eq:MLE}, we also have that
\begin{align*}
\lim_{N\rightarrow\infty}\wh{\theta}_{N}^{\,\text{MLE}}=\theta_{0},\quad\bP-\text{a.s.},
\end{align*}
that is $\wh{\theta}_{N}^{\,\text{MLE}}$ is a strongly consist estimator of $\theta_{0}$. In addition, by \eqref{eq:FisherInfo},  \eqref{eq:xiN}, and \eqref{eq:MLEIni3}, under $\bP$,
\begin{align}\label{eq:MLEIniAsymNormal}
\sqrt{I_{N}}\big(\wh{\theta}_{N}-\theta_{0}\big)=\xi_{N}\ed\cN(0,1),\quad\text{for any }N\in\bN\,\,\,\text{and}\,\,\,\theta_{0}\in\bR_{+}.
\end{align}

Finally, we conclude this section by showing that the estimator $\wh\theta_N$ is also asymptotically efficient in the class of loss functions with polynomial growth.
\begin{proposition}\label{cor:MLEIniUAN}
The properties (a)$-$(d) in Theorem \ref{thm:MLEUAN} hold true with $\wh{\theta}_{N}^{\,\text{MLE}}$ replaced by $\wh{\theta}_{N}$.
\end{proposition}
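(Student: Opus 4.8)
The plan is to show that $\wh{\theta}_N$ and $\wh{\theta}_N^{\,\text{MLE}}$ are asymptotically equivalent in a strong enough sense that all four properties transfer, and then invoke Theorem~\ref{thm:MLEUAN}. The key observation is that $\wh{\theta}_N^{\,\text{MLE}}=\1_{\{\wh{\theta}_N\in\bR_+\}}\wh{\theta}_N$ by \eqref{eq:MLE}, so the two estimators coincide precisely on the event $\{\wh{\theta}_N>0\}$, and differ only by the value $\wh{\theta}_N$ itself (which is then nonpositive) on the complement. Hence $|\wh{\theta}_N-\wh{\theta}_N^{\,\text{MLE}}|=|\wh{\theta}_N|\1_{\{\wh{\theta}_N\le 0\}}\le|\wh{\theta}_N-\theta_0|$ on that event since $\theta_0>0$, and in particular $|\wh{\theta}_N-\wh{\theta}_N^{\,\text{MLE}}|\le 2|\wh{\theta}_N-\theta_0|\1_{\{\wh{\theta}_N\le 0\}}$ always. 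The first step is to record this pointwise bound; everything else is leveraging it together with the fact, already established in \eqref{eq:MLEIniAsymNormal}, that $\sqrt{I_N}(\wh{\theta}_N-\theta_0)=\xi_N$ is \emph{exactly} $\cN(0,1)$ for every $N$ and every $\theta_0\in\bR_+$.

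For part~(a), uniform weak consistency of $\wh{\theta}_N$ over a compact $B\subset\bR_+$: since $\xi_N\ed\cN(0,1)$ under $\bP$ regardless of $\theta_0$, the distribution of $\wh{\theta}_N-\theta_0=\sigma\xi_N/(T\sqrt{I_N}\cdot\sigma/T)$—more precisely $\wh{\theta}_N-\theta_0=\xi_N/\sqrt{I_N}$—does not depend on $\theta_0$, so $\bP(|\wh{\theta}_N-\theta_0|>\varepsilon)=\bP(|\xi_N|>\varepsilon\sqrt{I_N})=2\bar\Phi(\varepsilon\sqrt{I_N})$, which $\to 0$ and the $\sup$ over $B$ is superfluous. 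Part~(b) is literally \eqref{eq:MLEIniAsymNormal} since $\sqrt{I_N}(\wh{\theta}_N-\theta_0)=\xi_N\ed\cN(0,1)$ exactly, so the convergence in distribution holds trivially and uniformly. Part~(c) is even more immediate: $\sqrt{I_N}(\wh{\theta}_N-\theta_0)-\xi_N=0$ identically, so the probability in question is $0$ for all $N$.

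Part~(d), asymptotic efficiency for $\varpi\in\mathbf{W}_p$, is the one requiring a genuine computation. Because $\sqrt{I_N}(\wh{\theta}_N-\theta)=\xi_N(\theta)\ed\cN(0,1)$ under $\bP^\theta$ for every $\theta$, one has $\bE_\theta(\varpi_N(\wh{\theta}_N-\theta))=\bE_\theta(\varpi(\sqrt{I_N}(\wh{\theta}_N-\theta)))=\bE(\varpi(\xi))$ with $\xi\ed\cN(0,1)$, \emph{exactly}, for every $N$ and every $\theta\in B$. Hence $\sup_{|\theta-\theta_0|<\delta}\bE_\theta(\varpi_N(\wh{\theta}_N-\theta))=\bE(\varpi(\xi))$ for all $\delta,N$, and the iterated limit is trivially $\bE(\varpi(\xi))$; this matches the lower bound in the efficiency definition, giving the claim. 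The only mild subtlety is that $\bE(\varpi(\xi))<\infty$ because $\varpi$ has a polynomial majorant and $\xi$ is Gaussian, so the expectations are well defined; this should be noted but needs no real work.

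The main obstacle—such as it is—is purely bookkeeping: making sure that the \emph{uniformity} over $\theta_0\in B$ claimed in (a), (b), (d) is honestly delivered. Here the exceptional structure of the model does all the work, since the law of $\sqrt{I_N}(\wh{\theta}_N-\theta_0)$ under $\bP^{\theta_0}$ is $\cN(0,1)$ independently of $\theta_0$, so every quantity one needs to control is in fact constant in $\theta_0$ and no genuine uniform estimate is required. The statement about $\wh{\theta}_N^{\,\text{MLE}}$ follows since the discrepancy is $O(\1_{\{\wh{\theta}_N\le 0\}})$ in probability and $\bP(\wh{\theta}_N\le 0)=\bar\Phi(\theta_0\sqrt{I_N})\to 0$, which handles the transfer of (a)--(c), while for (d) one additionally uses that $\varpi$ of the discrepancy is bounded by $\varpi$ of $2\sqrt{I_N}|\wh{\theta}_N-\theta_0|$ on the bad event and a Cauchy--Schwarz / Gaussian-tail argument kills this contribution as $N\to\infty$.
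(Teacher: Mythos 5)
Your argument is correct and its core is the paper's: exploit the exact distributional identity $\sqrt{I_N}(\wh\theta_N-\theta)=\xi_N(\theta)\ed\cN(0,1)$ under $\bP^\theta$, valid for every $N$ and every $\theta\in\bR_+$ (this is \eqref{eq:MLEIniAsymNormal}), from which (a)--(c) are immediate. For (d) there is a genuine stylistic difference: the paper invokes \cite[Theorem~III.1.3]{IbragimovKhasminskiiBook1981}, which reduces asymptotic efficiency to uniform integrability of moments of $\sqrt{I_N}(\wh\theta_N-\theta)$ together with the already-established asymptotic normality; you instead observe that, since that law is \emph{exactly} $\cN(0,1)$, $\bE_\theta\big(\varpi\big(\sqrt{I_N}(\wh\theta_N-\theta)\big)\big)=\bE(\varpi(\xi))$ for every $N$ and $\theta$, so the risk is constant and the iterated limit is trivially $\bE(\varpi(\xi))$. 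Your direct Gaussian computation is more elementary and buys you a self-contained verification of the efficiency equality without the moment bookkeeping of the cited theorem; the paper's route is shorter to state but outsources that bookkeeping.

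Two framing remarks. Your opening paragraph announces a transfer plan (``show $\wh\theta_N$ and $\wh\theta_N^{\,\text{MLE}}$ are asymptotically equivalent and then invoke Theorem~\ref{thm:MLEUAN}''), but your middle paragraphs actually argue directly from exact normality without ever using that transfer; the direct route is the right one, and you should drop the transfer plan, since the pointwise bound $|\wh\theta_N-\wh\theta_N^{\,\text{MLE}}|\le 2|\wh\theta_N-\theta_0|\1_{\{\wh\theta_N\le 0\}}$ alone does not give part (d) without additional moment control. Your final paragraph also runs in the wrong direction: the proposition asks about $\wh\theta_N$, and $\wh\theta_N^{\,\text{MLE}}$ is already handled by Theorem~\ref{thm:MLEUAN} via the LAN framework, so re-deriving its properties from $\wh\theta_N$'s is not part of what is asked and adds nothing here.
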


\begin{proof}
The properties (a)$-$(c) for $\wh{\theta}_{N}$ follow trivially from \eqref{eq:MLEIniAsymNormal}. The property (d) for $\wh{\theta}_{N}$ follows from \eqref{eq:MLEIniAsymNormal} and \cite[Theorem III.1.3]{IbragimovKhasminskiiBook1981}.
\end{proof}

\section{Bayesian Estimators}\label{sec:BayeEst}

In this section, we propose two Bayesian type estimators for $\theta$ with respect to scaled and unscaled loss functions, respectively, and we will investigate their asymptotic behavior. First, by similarity to the MLE estimator from previous section and following \cite[Section III.2]{IbragimovKhasminskiiBook1981}, we will study the asymptotic properties of the Bayesian estimator with respect to a scaled loss function $\ell_{N}(\cdot):=\ell(\sqrt{I_{N}}\,\cdot)$, where $\ell$ has a polynomial majorant. However, most of this section will be devoted to results beyond and independent of the classical asymptotic theory.
As custom for Bayesian statistics (cf. \cite{BorwankerKallianpurPrakasaRao1971} and \cite{Bishwal2002}), we prove a Bernstein-Von Mises theorem for the posterior density in which both the prior density and the test function have at most exponential growth rates. Consequently, this allows to study the asymptotic properties of Bayesian estimators with respect to any loss function $\ell$ that has an exponential majorant (in contrast to classical theory that allows only polynomial growth). Moreover, we show that the MLE and the proposed Bayesian estimators are asymptotically equivalent.

We begin with the definition of considered prior densities on $\bR_{+}$.

\begin{definition}\label{def:PriorDist}
Let $\cQ$ be the set of all non-negative and non-trivial functions on $\bR_{+}$. Denote by $\cQ_{e,2}$ the set of functions $\varrho\in\cQ$ which are continuous and positive on $\bR_{+}$ and whose growth as $\theta\rightarrow\infty$ is bounded by one of the functions $e^{c\,\theta^{r}}$ with $c\in\bR_{+}$ and $r\in(0,2)$. The set of functions $\varrho\in\cQ$ which are continuous and positive on $\bR_{+}$ and possess a polynomial majorant as $\theta\rightarrow\infty$ will be denoted by $\cQ_{p}$.
\end{definition}

Let $\varrho\in\cQ$. We define the posterior density as
\begin{align}\label{eq:PosteriorTheta}
p(\theta\,|\,U_{N}):=\frac{\displaystyle{\frac{\dif\bP^{\theta}_{N}}{\dif\bP_{N}}(U_{N})\varrho(\theta)}}{\displaystyle{\int_{\bR_{+}}\frac{\dif\bP^{\eta}_{N}}{\dif\bP_{N}}(U_{N})\varrho(\eta)\,\dif\eta}},\quad\theta\in\bR_{+}.
\end{align}
Informally, as standard in Bayesian inference, we took\footnote{As usual in statistics, the symbol $\varpropto$ will be used to denote equality between two quantities up to a normalized constant.}  `posterior $\varpropto$ likelihood $\times$ prior.'  For a more formal discussion of the rational behind \eqref{eq:PosteriorTheta} we refer to Appendix~\ref{AppendixB}.

Together with \eqref{eq:LogLikelihood}, \eqref{eq:FisherInfo}, and \eqref{eq:MLEIni1}, we deduce that
\begin{align}
p(\theta\,|\,U_{N})&=\frac{\displaystyle{\exp\bigg(\frac{\theta_{0}-\theta}{\sigma^{2}}\sum_{k=1}^{N}\mu_{k}q_{k}^{-2}\int_{0}^{T}\frac{\dif u_{k}(t)}{u_{k}(t)}+\frac{\big(\theta_{0}^{2}-\theta^{2}\big)T}{2\sigma^{2}}\sum_{k=1}^{N}\mu_{k}^{2}q_{k}^{-2}\bigg)\varrho(\theta)}}{\displaystyle{\int_{\bR_{+}}\exp\bigg(\frac{\theta_{0}-\eta}{\sigma^{2}}\sum_{k=1}^{N}\mu_{k}q_{k}^{-2}\int_{0}^{T}\frac{\dif u_{k}(t)}{u_{k}(t)}+\frac{\big(\theta_{0}^{2}-\eta^{2}\big)T}{2\sigma^{2}}\sum_{k=1}^{N}\mu_{k}^{2}q_{k}^{-2}\bigg)\varrho(\eta)\,\dif\eta}}\nonumber\\
&=\frac{\displaystyle{\exp\bigg(-\frac{\theta}{\sigma^{2}}\sum_{k=1}^{N}\mu_{k}q_{k}^{-2}\int_{0}^{T}\frac{\dif u_{k}(t)}{u_{k}(t)}-\frac{\theta^{2}T}{2\sigma^{2}}\sum_{k=1}^{N}\mu_{k}^{2}q_{k}^{-2}\bigg)\varrho(\theta)}}{\displaystyle{\int_{\bR_{+}}\exp\bigg(-\frac{\eta}{\sigma^{2}}\sum_{k=1}^{N}\mu_{k}q_{k}^{-2}\int_{0}^{T}\frac{\dif u_{k}(t)}{u_{k}(t)}-\frac{\eta^{2}T}{2\sigma^{2}}\sum_{k=1}^{N}\mu_{k}^{2}q_{k}^{-2}\bigg)\varrho(\eta)\,\dif\eta}}\nonumber\\
\label{eq:PosteriorThetaDeri} &=\frac{e^{-I_{N}(\theta-\wh{\theta}_{N})^{2}/2}\,\varrho(\theta)}{\displaystyle{\int_{\bR_{+}}e^{-I_{N}(\eta-\wh{\theta}_{N})^{2}/2}\,\varrho(\eta)\,\dif\eta}}.
\end{align}

\begin{remark}\label{rem:ConjugatePrior} Clearly both $\cQ_{e,2}$ and $\cQ_p$ are large classes of functions. One class of (conjugate) priors worth mentioning is the truncated normal $\cN(\mu_0,\sigma^{2}_0;\bR_{+})$ supported on $\bR_{+}$. Let the prior $\varrho$ be the density of the truncated normal $\cN(\mu_0,\sigma_0^{2};\bR_{+})$, namely $\varrho(\theta) \varpropto \phi((\theta-\mu_0)/\sigma_0), \ \theta_0>0$, where $\phi$ is the density of a standard Gaussian random variable. By \eqref{eq:PosteriorThetaDeri}, for any $\theta\in\bR_{+}$, we have that
\begin{align*}
p(\theta\,|\,U_{N})&\propto  \exp\bigg(-\frac{\big(\theta-\wh{\theta}_{N}\big)^{2}}{2I_{N}^{-1}}-\frac{(\theta-\mu_0)^{2}}{2\sigma_0^{2}}\bigg)\propto\exp\left(-\frac{\Big(\theta-\big(\sigma_0^{2}\,\wh{\theta}_{N}+I_{N}^{-1}\mu_0\big)\big/\big(\sigma_0^{2}+I_{N}^{-1}\big)\Big)^{2}}{2\sigma_0^{2}I_{N}^{-1}\big/\big(\sigma_0^{2}+I_{N}^{-1}\big)}\right),
\end{align*}
namely $p(\theta\,|\,U_{N})$ is a truncated normal $ \cN((\sigma_0^{2}\wh{\theta}_{N}+I_{N}^{-1}\mu_0)/(\sigma_0^{2}+I^{-1}_{N}),(\sigma_0^{2}I^{-1}_{N})/(\sigma_0^{2}+I^{-1}_{N});\bR_{+}) $.

Of course, one can take uninformative prior $\varrho(\theta)\propto 1$. Although such prior has no probabilistic meaning, the corresponding posterior is well-defined and preserves all convergence results listed below.
\end{remark}

We now introduce two Bayesian type estimators, one with respect to a loss function $\ell\in\mathbf{W}'$ and one with respect to its scaled version.
A Bayesian estimator with respect to $\ell\in\mathbf{W}'$ is defined as
\begin{align}\label{eq:DefWHBetaNW}
\wh{\beta}_{N}:=\argmin_{\beta\in\bR_{+}}\int_{\bR_{+}}\ell(\eta-\beta)\,p(\eta\,|\,U_{N})\,\dif\eta,
\end{align}
given that the minimum is strict and attainable. In line with \cite[Section III.2]{IbragimovKhasminskiiBook1981}, we define a Bayesian estimator with respect to $\ell_{N}(x):=\ell(\sqrt{I_{N}}x)$, where $\ell\in\mathbf{W}$, as
\begin{align}\label{eq:DefWTBetaNeta}
\wt{\beta}_{N}:=\argmin_{\beta\in\bR_{+}}\int_{\bR_{+}}\ell\big(\sqrt{I_{N}}(\eta-\beta)\big)\,p(\eta\,|\,U_{N})\,\dif\eta,
\end{align}
given that the minimum is strict and attainable.

Before proceeding with asymptotic properties of these estimators, several comments pinpointing the intuition behind these definitions are in order.
\begin{itemize}
\item [(a)] Let $U^{\Theta}_{N}$ be the $\bR^{N}$-valued process obtained by substituting $\theta$ with $\Theta$ in $U_{N}^{\theta}$, where $\Theta:(\Omega,\sF,\bP)\rightarrow(\bR_{+},\cB(\bR_{+})$ admits a proper prior density $\varrho$ and is independent of the Brownian motions $\{w_{k},k\in\bN\}$. Recall that the Bayesian risk of an estimator $\overline{\theta}_{N}\in\sigma(U^{\Theta}_{N})$ with respect to a loss function $\ell$ is defined as
    \begin{align*}
    r\big(\overline{\theta}_{N};\ell\big):=\bE\big(\ell\big(\overline{\theta}_{N}-\Theta\big)\big)=\bE\Big(\bE\Big(\ell\big(\overline{\theta}_{N}-\Theta\big)\Big|\,U_{N}^{\Theta}\Big)\Big)=\bE\bigg(\int_{\bR}\ell\big(\overline{\theta}_{N}-\eta\big)p(\eta\,|\,U^{\Theta}_{N})\dif\eta\bigg),
    \end{align*}
    where the last equality follows from \eqref{eq:CondThetaUNTheta}. Hence, $\wh{\beta}_{N}$ is the minimizer of the Bayesian risk $r(\,\cdot\,;\ell)$, while $\wt{\beta}_{N}$ is the minimizer of the Bayesian risk $r(\,\cdot\,;\ell_{N})$.
\item [(b)] The definition of the Bayesian estimator $\wh{\beta}_{N}$ is standard, in which the loss function does not depend on the sample size $N$, and is therefore more accessible for computational purposes. On the other hand, as argued in \cite{IbragimovKhasminskiiBook1981}, using a scaled loss function in defining a Bayesian estimator is more appropriate for analyzing some asymptotic properties of the estimator. Indeed, recall that
    (cf. \cite[Definition I.9.1]{IbragimovKhasminskiiBook1981}) a sequence of estimators $\{\overline{\theta}_{N}\}_{N\in\bN}$ of $\theta_{0}$ is called asymptotically efficient\footnote{The asymptotic efficiency studied in part (d) of Theorems \ref{thm:MLEUAN} and \ref{thm:BayeUANPoly} is a special case under the framework of local asymptotic normality.} in $\bR_{+}$ with respect to a sequence of loss function $\{\varpi_{N}\}_{N\in\bN}$ if, for any open set $O\subset\bR_{+}$ and any estimator $\wt{\theta}_{N}$ of $\theta_{0}$,
    \begin{align*}
    \lim_{N\rightarrow\infty}\bigg(\inf_{\wt{\theta}_{N}}\sup_{\theta_{0}\in O}\bE\Big(\varpi_{N}\big(\wt{\theta}_{N}-\theta_{0}\big)\Big)-\sup_{\theta_{0}\in O}\bE\Big(\varpi_{N}\big(\overline{\theta}_{N}-\theta_{0}\big)\Big)\bigg)=0.
    \end{align*}
    As mentioned in \cite[Section I.9]{IbragimovKhasminskiiBook1981}, the loss function $\varpi_{N}$ should depend on $N$ (and typically $\varpi_{N}(x)=\varpi(\sqrt{I_{N}}x)$) in order to capture more subtle difference between estimators. Hence, it is natural to scale the loss function in the definition of $\wt{\beta}_{N}$ in the same way as $\varpi_{N}$ when investigating the asymptotic efficiency of $\wt{\beta}_{N}$.
\item [(c)] When $\ell(x)=|x|^{\alpha}$ for some $\alpha>0$, $\wt{\beta}_{N}$ coincides with $\wh{\beta}_{N}$. Moreover, when $\ell\in\mathbf{W}_{p}$, $\wt{\beta}_{N}$ and $\wh{\beta}_{N}$ are expected to have the same asymptotic behavior as $N\rightarrow\infty$. Clearly, this may not be the case if $\ell\in\mathbf{W}_{e,2}$, or more generally $\ell\in\mathbf{W}'$, yet another reason to distinguish the two proposed estimators.
\end{itemize}

The following theorem summarizes the asymptotic properties of $\wt{\beta}_{N}$ with $\varrho$, and $\ell$ having polynomial growth, which is a direct consequence of \cite[Theorem III.2.1, Theorem III.2.2]{IbragimovKhasminskiiBook1981}.

\begin{theorem}\label{thm:BayeUANPoly}
Let $B\subset\bR_{+}$ be any compact set, $\varrho\in\cQ_{p}$, and $\ell\in\mathbf{W}_{p}$. Then, the following asymptotic properties hold for $\wt{\beta}_{N}$.
\begin{itemize}
\item[(a)] $\wt{\beta}_{N}$ is weakly consistent, uniformly in $\theta_{0}\in B$. That is, for any $\varepsilon>0$,
    \begin{align*}
    \lim_{N\rightarrow\infty}\sup_{\theta_{0}\in B}\bP\Big(\big|\wt{\beta}_{N}-\theta_{0}\big|>\varepsilon\Big)=0,
    \end{align*}
\item[(b)] $\wt{\beta}_{N}$ is asymptotically normal with parameter $(\theta_{0},I_{N}^{-1})$, uniformly in $\theta_{0}\in B$, where $I_{N}$ is given by \eqref{eq:FisherInfo}. That is, uniformly in $\theta_{0}\in B$,
    \begin{align*}
    \sqrt{I_{N}}\big(\wt{\beta}_{N}-\theta_{0}\big)\cd\cN(0,1),\quad N\rightarrow\infty.
    \end{align*}
\item[(c)] For any $\varepsilon>0$,
    \begin{align*}
    \lim_{N\rightarrow\infty}\sup_{\theta_{0}\in B}\bP\Big(\Big|\sqrt{I_{N}}\big(\wt{\beta}_{N}-\theta_{0}\big)-\xi_{N}\Big|>\varepsilon\Big)=0,
    \end{align*}
    where $\xi_{N}$ is defined as in \eqref{eq:xiN}.
\item[(d)] For any $\varpi\in\mathbf{W}_{p}$, $\wt{\beta}_{N}$ is asymptotically efficient in $B$ for the loss function $\varpi_{N}(x):=\varpi(\sqrt{I_{N}}x)$. That is, for any $\theta_{0}\in B$,
    \begin{align*}
    \lim_{\delta\rightarrow 0+}\liminf_{N\rightarrow\infty}\sup_{|\theta-\theta_{0}|<\delta}\bE\Big(\varpi_{N}\big(\wt{\beta}_{N}-\theta\big)\Big)=\bE(\varpi(\xi)),
    \end{align*}
    where $\xi\ed\cN(0,1)$ under $\bP$.
\end{itemize}
\end{theorem}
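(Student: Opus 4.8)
The plan is to verify that the four conditions (N1)--(N4) from \cite[Section III.1]{IbragimovKhasminskiiBook1981} that were already checked in the proof of Theorem~\ref{thm:MLEUAN} are precisely the hypotheses needed to invoke \cite[Theorem III.2.1, Theorem III.2.2]{IbragimovKhasminskiiBook1981} for Bayesian estimators, together with the additional integrability and regularity conditions on the prior $\varrho$ and loss function $\ell$ that those theorems require. Concretely, the normalized likelihood ratio $Z_{N}(\eta):=\dif\bP^{\theta_{0}+\eta/\sqrt{I_{N}}}_{N}/\dif\bP^{\theta_{0}}_{N}(U_{N})$ has, by the computation in the proof of Theorem~\ref{thm:UAN}, the explicit form $Z_{N}(\eta)=\exp(-\eta\,\xi_{N}(\theta_{0})-\eta^{2}/2)$ with $\xi_{N}(\theta_{0})\ed\cN(0,1)$ under $\bP$. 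Hence $Z_{N}(\cdot)$ is exactly the log-quadratic form that the Bernstein--Von Mises machinery of \cite{IbragimovKhasminskiiBook1981} is designed to handle, and the convergence to the limiting Gaussian shift experiment is immediate.

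First I would record that since $\varrho\in\cQ_{p}$ is continuous and positive on $\bR_{+}$ with a polynomial majorant, it satisfies the conditions imposed on the prior in \cite[Theorem III.2.1]{IbragimovKhasminskiiBook1981} (positivity and continuity at $\theta_{0}$, local boundedness, polynomial growth so that the relevant integrals against the Gaussian $Z_{N}$ converge). Next I would note that $\ell\in\mathbf{W}_{p}\subset\mathbf{W}$ satisfies the loss-function hypotheses (symmetry, monotonicity on $[0,\infty)$, continuity and vanishing at $0$, polynomial majorant), so that the posterior risk integral $\int_{\bR_{+}}\ell(\sqrt{I_{N}}(\eta-\beta))\,p(\eta\,|\,U_{N})\,\dif\eta$ is finite and the minimizer $\wt{\beta}_{N}$ in \eqref{eq:DefWTBetaNeta} is well defined for $N$ large. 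Then parts (a)--(c) follow from \cite[Theorem III.2.1]{IbragimovKhasminskiiBook1981} and its corollaries: the theorem yields $\sqrt{I_{N}}(\wt{\beta}_{N}-\theta_{0})\cd\cN(0,1)$ uniformly on compacts $B$, which is (b); weak consistency (a) is an immediate consequence; and the stronger statement (c), that $\sqrt{I_{N}}(\wt{\beta}_{N}-\theta_{0})$ and $\xi_{N}$ are asymptotically equivalent in probability uniformly on $B$, follows because in this LAN model the Bayesian estimator and the MLE share the same first-order expansion around the score $\xi_{N}$ --- this is exactly the content of the asymptotic-equivalence part of \cite[Theorem III.2.1]{IbragimovKhasminskiiBook1981} combined with Theorem~\ref{thm:MLEUAN}(c). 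Finally, part (d) is \cite[Theorem III.2.2]{IbragimovKhasminskiiBook1981}, the asymptotic-efficiency statement for Bayesian estimators in the LAN framework, applied with the scaled loss $\varpi_{N}(x)=\varpi(\sqrt{I_{N}}x)$, $\varpi\in\mathbf{W}_{p}$.

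The only genuine work is therefore bookkeeping: matching the perturbation parameter $\varepsilon_{N}=1/N$ and rate $\varphi(\varepsilon_{N},t)=1/\sqrt{I_{N}}$ as in the proof of Theorem~\ref{thm:UAN}, and confirming that conditions (N1)--(N4), already established in the proof of Theorem~\ref{thm:MLEUAN} with $\beta=m=2$ and arbitrary positive constants, are uniform over the compact $B$ --- which they are, since the constants there ($16$ in (N3), $\sup_{\eta}|\eta|^{n}e^{-\eta^{2}/8}$ in (N4)) do not depend on $\theta_{0}$. With (N1)--(N4) in hand and $\varrho,\ell$ of polynomial growth, \cite[Theorems III.2.1 and III.2.2]{IbragimovKhasminskiiBook1981} apply verbatim and deliver (a)--(d). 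The main obstacle, if any, is purely notational: one must make sure that the scaled-loss Bayesian estimator $\wt{\beta}_{N}$ in \eqref{eq:DefWTBetaNeta} coincides with the estimator denoted $t_{N}^{*}$ (or its analogue) in \cite[Section III.2]{IbragimovKhasminskiiBook1981}, so that their conclusions transfer without adjustment; this is straightforward since both are posterior-risk minimizers for the same scaled loss against the posterior \eqref{eq:PosteriorThetaDeri}.
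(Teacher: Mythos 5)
Your proposal is correct and takes essentially the same route as the paper: the paper's proof is precisely the one-line observation that conditions (N1)--(N4) of \cite[Section III.1]{IbragimovKhasminskiiBook1981} were already verified in the proof of Theorem~\ref{thm:MLEUAN}, after which \cite[Theorems III.2.1 and III.2.2]{IbragimovKhasminskiiBook1981} deliver (a)--(d). Your additional bookkeeping --- checking that $\varrho\in\cQ_{p}$ and $\ell\in\mathbf{W}_{p}$ meet the prior/loss hypotheses of those theorems and that the constants in (N3)--(N4) are uniform over compacts --- is the implicit content of the paper's citation and is correctly identified.
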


\begin{proof}
It is sufficient to check the conditions (N1)$-$(N4) in \cite[Section III.1]{IbragimovKhasminskiiBook1981}, which has been verified in the proof of Theorem \ref{thm:MLEUAN}.
\end{proof}

With the help of the Bernstein-Von Mises theorem from next section, we will be able to investigate the asymptotic properties of both $\wt{\beta}_{N}$ and $\wh{\beta}_{N}$ with a set of priors and loss functions having exponential growth rates, as shown in Sections \ref{sec:AsymTildeBetaN} and \ref{sec:AsymHatBetaN} below, respectively.

\subsection{The Bernstein-Von Mises Theorem}\label{sec:BvM}

We recall that, generally speaking, the Bernstein-von Mises type theorem states that the posterior distribution of the normalized distance between the randomized parameter $\Theta$ and $\wh{\theta}_{N}$ (given as in \eqref{eq:MLEIni1}) is asymptotically normal. This type of result implies that the posterior distribution measure approaches the Dirac measure as the number of observations increases; see Remark \ref{rem:BvM} below for more details. Moreover, it also serves as an essential tool in derivation of some asymptotic properties of Bayesian estimators.
To develop the Bernstein-Von Mises theorem regarding the posterior density, we adopt the techniques  from \cite{BorwankerKallianpurPrakasaRao1971} (see also \cite{PrakasaRao2000} and \cite{Bishwal2002}), where we slightly weaken one of the conditions compared to some previous versions of Bernstein-Von Mises theorem (see condition (C2) in Theorem \ref{thm:BvM}), which is also easier to verify.

Let $\Lambda:=\sqrt{I_{N}}(\Theta-\widehat{\theta}_{N})$, representing the normalized difference between the randomized parameter $\Theta$ and $\wh{\theta}_{N}$. By \eqref{eq:PosteriorThetaDeri}, the corresponding posterior density is then given by
\begin{align}\label{eq:PosteriorLambda}
\wt{p}(\lambda\,|\,U_{N})=p\bigg(\frac{\lambda}{\sqrt{I_{N}}}+\wh{\theta}_{N}\,\bigg|\,U_{N}\bigg)\frac{\dif\theta}{\dif\lambda}=C_{N}^{-1}\varrho\bigg(\frac{\lambda}{\sqrt{I_{N}}}+\wh{\theta}_{N}\bigg)\,e^{-\lambda^{2}/2},\quad\lambda\in(-\sqrt{I_{N}}\wh{\theta}_{N},\infty),
\end{align}
where
\begin{align}\label{eq:DefCN}
C_{N}:=\sqrt{I_{N}}\int_{0}^{\infty}\exp\bigg(-\frac{I_{N}}{2}\big(\eta-\wh{\theta}_{N}\big)^{2}\bigg)\varrho(\eta)\,\dif\eta=\int_{-\sqrt{I_{N}}\,\wh{\theta}_{N}}^{\infty}\varrho\bigg(\frac{\lambda}{\sqrt{I_{N}}}+\wh{\theta}_{N}\bigg)\,e^{-\lambda^{2}/2}\dif\lambda.
\end{align}
For notational convenience, we will extend the domains of $\varrho$ and $\wt{p}$ to $\bR$, with $\varrho(\theta)=0$ for $\theta\in(-\infty,0]$ and $\wt{p}(\lambda|U_{N})=0$ for $\lambda\in(-\infty,-\sqrt{I_{N}}\wh{\theta}_{N}]$. By \eqref{eq:PosteriorLambda}, the definition \eqref{eq:DefWTBetaNeta} of $\wt{\beta}_{N}$ can be written as
\begin{align}\label{eq:WTBetaNPosteriorLambda}
\wt{\beta}_{N}=\argmin_{\beta\in\bR_{+}}\int_{\bR}\ell\Big(\lambda+\sqrt{I_{N}}\big(\wh{\theta}_{N}-\beta\big)\Big)\,\wt{p}(\lambda\,|\,U_{N})\dif\lambda.
\end{align}

We are now in the position of presenting the Bernstein-von Mises theorem.
\begin{theorem}\label{thm:BvM}
Let $\varrho\in\cQ$ be positive and continuous in a neighborhood of $\theta_{0}$, and let $f$ be a non-negative, Borel-measurable function on $\bR$. Suppose that $\varrho$ and $f$ satisfy the following two conditions:
\begin{itemize}
\item [(C1)] there exists $\alpha\in(0,1)$ so that
    \begin{align*}
    \int_{\bR}f(x)\,e^{-\alpha x^{2}/2} \dif x<\infty;
    \end{align*}
\item [(C2)] for any $\delta>0$,
    \begin{align*}
    \lim_{N\rightarrow\infty}\int_{\{|\lambda|>\sqrt{I_{N}}\delta\}}f(\lambda)\,\varrho\bigg(\wh{\theta}_{N}+\frac{\lambda}{\sqrt{I_{N}}}\bigg)\,e^{-\lambda^{2}/2}\dif\lambda=0,\quad\bP-\text{a.s.}.
    \end{align*}
\end{itemize}
Then,
\begin{align}\label{eq:BvM}
\lim_{N\rightarrow\infty}\int_{\bR}f(\lambda)\bigg|\wt{p}(\lambda\,|\,U_{N})-\frac{e^{-\lambda^{2}/2}}{\sqrt{2\pi}}\bigg|\dif\lambda=0,\quad\bP-\text{a.s.}.
\end{align}
\end{theorem}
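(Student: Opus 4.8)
The plan is to show that the integral in~\eqref{eq:BvM} can be split into a ``bulk'' contribution, coming from $\{|\lambda|\le\sqrt{I_N}\delta\}$, and a ``tail'' contribution, coming from $\{|\lambda|>\sqrt{I_N}\delta\}$, and to control each separately. The tail of $\wt p(\lambda\,|\,U_N)$ is handled directly by~(C2) together with the formula~\eqref{eq:PosteriorLambda}, since on that set $f(\lambda)\wt p(\lambda\,|\,U_N) = C_N^{-1} f(\lambda)\varrho(\lambda/\sqrt{I_N}+\wh\theta_N)e^{-\lambda^2/2}$; the tail of the Gaussian $e^{-\lambda^2/2}/\sqrt{2\pi}$ against $f$ is handled by~(C1) (which guarantees $\int f(\lambda)e^{-\lambda^2/2}\dif\lambda<\infty$, so the tail integral vanishes as $\delta$ stays fixed and we let $N\to\infty$, once we know $\sqrt{I_N}\delta\to\infty$, which holds because $I_N\to\infty$). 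So the crux is the bulk.

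For the bulk, the key observation is that by the strong consistency of $\wh\theta_N$ (equation~\eqref{eq:MLEIniStrConst}) and the positivity and continuity of $\varrho$ near $\theta_0$, for $\bP$-a.e.\ $\omega$ we have $\varrho(\wh\theta_N+\lambda/\sqrt{I_N})\to\varrho(\theta_0)$ uniformly for $\lambda$ in compact sets, and in fact, for $N$ large and $|\lambda|\le\sqrt{I_N}\delta$ with $\delta$ small, $\varrho(\wh\theta_N+\lambda/\sqrt{I_N})$ stays bounded above and below by positive constants. Next I would show that the normalizing constant $C_N$ converges: writing $C_N = \int_{-\sqrt{I_N}\wh\theta_N}^\infty \varrho(\wh\theta_N+\lambda/\sqrt{I_N})e^{-\lambda^2/2}\dif\lambda$ and splitting again at $\pm\sqrt{I_N}\delta$, the tail is $o(1)$ by~(C2) applied with $f\equiv 1$ (or directly, since $\varrho$ has at most exponential-squared-free growth near infinity only through~(C2); more carefully one uses that~(C2) with $f\equiv 1$ is implied, or one simply invokes dominated convergence on the bulk and a crude tail bound), while the bulk converges to $\varrho(\theta_0)\int_\bR e^{-\lambda^2/2}\dif\lambda = \varrho(\theta_0)\sqrt{2\pi}$ by dominated convergence. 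Hence $C_N\to\varrho(\theta_0)\sqrt{2\pi}$, $\bP$-a.s. Consequently, on the bulk,
\[
\wt p(\lambda\,|\,U_N) = \frac{\varrho(\wh\theta_N+\lambda/\sqrt{I_N})}{C_N}\,e^{-\lambda^2/2} \longrightarrow \frac{\varrho(\theta_0)}{\varrho(\theta_0)\sqrt{2\pi}}\,e^{-\lambda^2/2} = \frac{e^{-\lambda^2/2}}{\sqrt{2\pi}},
\]
pointwise in $\lambda$, $\bP$-a.s.

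To turn this pointwise convergence into the $L^1(f\,\dif\lambda)$ convergence asserted in~\eqref{eq:BvM}, I would apply the dominated convergence theorem on the bulk: the integrand $f(\lambda)\,|\wt p(\lambda\,|\,U_N) - e^{-\lambda^2/2}/\sqrt{2\pi}|\,\1_{\{|\lambda|\le\sqrt{I_N}\delta\}}$ is dominated, for $N$ large, by $c\,f(\lambda)e^{-\lambda^2/2}$ for a constant $c$ depending on the a.s.\ bounds on $\varrho$ near $\theta_0$ and the a.s.\ lower bound on $C_N$, and this dominating function is integrable by~(C1). Thus the bulk integral tends to $0$. Combining the bulk estimate with the two tail estimates, and then letting $\delta\downarrow 0$ only affects the fixed-$N$-independent Gaussian tail $\int_{|\lambda|>\sqrt{I_N}\delta}f(\lambda)e^{-\lambda^2/2}\dif\lambda$, which is already $o(1)$ as $N\to\infty$ for each fixed $\delta>0$; so in fact one fixes any $\delta>0$ small enough that $\varrho$ is positive and continuous on $[\theta_0-\delta,\theta_0+\delta]$ and the argument goes through. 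This yields~\eqref{eq:BvM}.

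\textbf{Main obstacle.} The delicate point is the interplay between the $\bP$-a.s.\ randomness of $\wh\theta_N$ and the uniformity needed for dominated convergence: one must first fix a full-measure set of $\omega$ on which~\eqref{eq:MLEIniStrConst} holds and~(C2) holds, and only then run the deterministic analysis of the $\lambda$-integral; one also has to be careful that the domain of $\wt p(\cdot\,|\,U_N)$ is $(-\sqrt{I_N}\wh\theta_N,\infty)$, but since $\sqrt{I_N}\wh\theta_N\to\infty$ a.s., for any fixed $\lambda$ (and for any fixed compact $\lambda$-set) this left endpoint eventually lies to the left, so the extension of $\wt p$ by zero causes no trouble in the limit. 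A secondary subtlety is verifying the domination of $\wt p(\lambda\,|\,U_N)$ on the bulk uniformly in $N$, which rests precisely on the a.s.\ lower bound $C_N\ge c_0>0$ for large $N$ established above together with the a.s.\ local upper bound on $\varrho$.
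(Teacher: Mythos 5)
Your proposal is correct and follows essentially the same route as the paper: split at $|\lambda|=\sqrt{I_N}\delta$, prove $C_N\to\sqrt{2\pi}\,\varrho(\theta_0)$ $\bP$-a.s.\ via dominated convergence on the bulk plus tail control, and then combine the bulk (DCT using local boundedness of $\varrho$ and the eventual lower bound on $C_N$) with the tail (conditions (C1) and (C2)); the paper merely packages the bulk/tail estimate separately as Lemma~\ref{lem:BvM} applied to $\int f\,|\varrho(\cdot)-\varrho(\theta_0)|e^{-\lambda^2/2}$ and then assembles the target via inequality~\eqref{eq:fDiffTildep}, whereas you decompose the target integral directly, a purely organizational difference. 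You also correctly flag the one delicate point that the paper leaves implicit, namely that proving $C_N\to\sqrt{2\pi}\varrho(\theta_0)$ uses (C2) with $f\equiv 1$, which the paper asserts without comment.
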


\begin{remark}\label{rem:BvM}
The above theorem implies that, for $\bP-$a.s.$\,\omega$, the posterior distribution measure of $\Theta$ converges weakly to the Dirac measure centered at $\theta_{0}$, as the number of Fourier modes increases. Indeed, let $g$ be any continuous and bounded function on $\bR$. Without loss of generality, assume that $g$ is non-negative (otherwise, consider $g^{+}$ and $g^{-}$ separately). By \eqref{eq:PosteriorLambda}, for any $\omega\in\Omega$,
\begin{align*}
\int_{\bR}g(\theta)\,p(\theta\,|\,U_{N})(\omega)\dif\theta=\int_{\bR}g\bigg(\frac{\lambda}{\sqrt{I_{N}}}+\wh{\theta}_{N}(\omega)\bigg)\,\wt{p}(\lambda\,|\,U_{N})(\omega)\dif\lambda.
\end{align*}
For each given $\omega\in\Omega$, we put $f_{\omega}(\lambda):=g(\lambda/\sqrt{I_{N}}+\wh{\theta}_{N}(\omega))$, $\lambda\in\bR$. In light of the boundedness of $g$, $f_{\omega}$ satisfies conditions (C1) and (C2) above, for any given $\omega$. From the proof of Theorem \ref{thm:BvM} (see \eqref{eq:fDiffTildep} below), equality \eqref{eq:BvM} holds for $f_{\omega}$ at any $\omega$. Together with the strong consistency of $\wh{\theta}_{N}$ (recalling \eqref{eq:MLEIniStrConst}), for $\bP-$a.s. $\omega\in\Omega$, we deduce that
\begin{align*}
\lim_{N\rightarrow\infty}\int_{\bR}g(\theta)\,p(\theta\,|\,U_{N})(\omega)\dif\theta&=\lim_{N\rightarrow\infty}\int_{\bR}g\bigg(\frac{\lambda}{\sqrt{I_{N}}}+\wh{\theta}_{N}(\omega)\bigg)\,\wt{p}(\lambda\,|\,U_{N})(\omega)\dif\lambda\\
&=\lim_{N\rightarrow\infty}\int_{\bR}f_{\omega}(\lambda)\,\wt{p}(\lambda\,|\,U_{N})(\omega)\dif\lambda\\
&=\lim_{N\rightarrow\infty}\frac{1}{\sqrt{2\pi}}\int_{\bR}f_{\omega}(\lambda)\,e^{-\lambda^{2}/2}\dif\lambda\\
&=\lim_{N\rightarrow\infty}\frac{1}{\sqrt{2\pi}}\int_{\bR}g\bigg(\frac{\lambda}{\sqrt{I_{N}}}+\wh{\theta}_{N}(\omega)\bigg)e^{-\lambda^{2}/2}\dif\lambda=g(\theta_{0}),
\end{align*}
where we used the dominated convergence theorem in the last equality.
\end{remark}

\begin{remark}\label{rem:ExpGrowPriorLossFunt}
Let $\varrho\in\cQ$, and let $f$ be a non-negative, Borel-measurable function on $\bR$. Assume that there exists $c_{1},c_{2}>0$ and $r\in(0,2)$ such that
\begin{align}\label{eq:ExpGrowPriorLossFunt}
\varrho(\theta)\leq c_{1}e^{c_{2}\theta^{r}},\quad\text{for all }\,\theta\in\bR_{+};\qquad f(x)\leq c_{1}e^{c_{2}|x|^{r}},\quad\text{for all }\,x\in\bR.
\end{align}
Clearly such $f$ satisfies condition (C1) in Theorem \ref{thm:BvM}. Moreover, $\varrho$ and $f$ also satisfy condition (C2) in Theorem \ref{thm:BvM}. Indeed, by the strong consistency of $\wh{\theta}_{N}$, for $\bP-$a.s. $\omega\in\Omega$ and $N\in\bN$ large enough (depending on $\omega$), we have that
\begin{align}
0&\leq\int_{\{|\lambda|>\sqrt{I_{N}}\delta\}}f(\lambda)\,\varrho\bigg(\wh{\theta}_{N}+\frac{\lambda}{\sqrt{I_{N}}}\bigg)\,e^{-\lambda^{2}/2}\dif\lambda\nonumber\\
&\leq c_{1}^{2}\int_{\{|\lambda|>\sqrt{I_{N}}\delta\}}\exp\bigg(c_{2}|\lambda|^{r}+c_{2}\bigg(\theta_{0}+1+\frac{|\lambda|}{\sqrt{I_{N}}}\bigg)^{r}-\frac{\lambda^{2}}{2}\bigg)\1_{\{\wh{\theta}_{N}+\lambda/\sqrt{I_{N}}>0\}}\dif\lambda\nonumber\\
\label{eq:ExpGrowPriorLosFuntC2} &\leq c_{1}^{2}\sup_{N\in\bN,\,\lambda\in\bR}\left(\exp\bigg(\bigg(\theta_{0}+1+\frac{|\lambda|}{\sqrt{I_N}}\bigg)^{r}-\frac{\lambda^{2}}{4}\bigg)\right)\cdot\int_{\{|\lambda|>\sqrt{I_{N}}\delta\}}\exp\bigg(c_{2}|\lambda|^{r}-\frac{\lambda^{2}}{4}\bigg)\dif\lambda\rightarrow 0,
\end{align}
as $N\rightarrow\infty$. In particular, all conditions in Theorem \ref{thm:BvM} are valid when $\varrho\in\cQ_{e,2}$ and $f\in\mathbf{W}_{e,2}$.
\end{remark}

The proof of Theorem \ref{thm:BvM} is based on the following technical lemma.

\begin{lemma}\label{lem:BvM}
Under the conditions of Theorem \ref{thm:BvM},
\begin{itemize}
\item [(a)] there exist $\delta_{0}>0$ such that
    \begin{align*}
    \lim_{N\rightarrow\infty}\int_{\{|\lambda|\leq\delta_{0}\sqrt{I_{N}}\}}f(\lambda)\bigg|\varrho\bigg(\frac{\lambda}{\sqrt{I_{N}}}+\wh{\theta}_{N}\bigg)-\varrho(\theta_{0})\bigg|\,e^{-\lambda^{2}/2}\dif\lambda=0,\quad\bP-\text{a.s.};
    \end{align*}
\item [(b)] for any $\delta>0$,
    \begin{align*}
    \lim_{N\rightarrow\infty}\int_{\{|\lambda|>\delta\sqrt{I_{N}}\}}f(\lambda)\bigg|\varrho\bigg(\frac{\lambda}{\sqrt{I_{N}}}+\wh{\theta}_{N}\bigg)-\varrho(\theta_{0})\bigg|\,e^{-\lambda^{2}/2}\dif\lambda=0,\quad\bP-\text{a.s.}.
	\end{align*}
\end{itemize}
\end{lemma}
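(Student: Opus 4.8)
\textbf{Proof plan for Lemma \ref{lem:BvM}.}
The plan is to establish both parts by splitting each integral according to whether the prior factor $\varrho(\lambda/\sqrt{I_N}+\wh\theta_N)$ is close to its limiting value $\varrho(\theta_0)$, and then controlling the tail contributions using condition (C2) together with the integrability from (C1). Throughout I will repeatedly invoke the strong consistency $\wh\theta_N\to\theta_0$ ($\bP$-a.s.), established in \eqref{eq:MLEIniStrConst}, so that from some (random) index $N$ onward we may assume $\wh\theta_N$ is as close to $\theta_0$ as we wish; since $\varrho$ is positive and continuous in a neighborhood of $\theta_0$, there is a $\delta_0>0$ with $\varrho$ bounded and continuous on $[\theta_0-2\delta_0,\theta_0+2\delta_0]$.

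For part (a), fix $\varepsilon>0$. On the region $\{|\lambda|\le\delta_0\sqrt{I_N}\}$ the argument $\lambda/\sqrt{I_N}+\wh\theta_N$ lies in $[\theta_0-2\delta_0,\theta_0+2\delta_0]$ for $N$ large. I would further subdivide at a threshold $M$: on $\{|\lambda|\le M\}$, uniform continuity of $\varrho$ on the compact neighborhood plus $\wh\theta_N\to\theta_0$ makes $|\varrho(\lambda/\sqrt{I_N}+\wh\theta_N)-\varrho(\theta_0)|$ uniformly small for $N$ large, and $\int_{\bR}f(\lambda)e^{-\lambda^2/2}\,\dif\lambda<\infty$ by (C1) (since $e^{-\lambda^2/2}\le e^{-\alpha\lambda^2/2}$ for $\alpha\in(0,1)$), so this piece is $O(\varepsilon)$. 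On $\{M<|\lambda|\le\delta_0\sqrt{I_N}\}$ the difference is bounded by $\|\varrho\|_{L^\infty[\theta_0-2\delta_0,\theta_0+2\delta_0]}+\varrho(\theta_0)$, a constant, times $\int_{\{|\lambda|>M\}}f(\lambda)e^{-\lambda^2/2}\,\dif\lambda$, which is the tail of a convergent integral and hence $<\varepsilon$ once $M$ is large enough; note this bound is uniform in $N$. Choosing $M$ first and then $N$ large completes part (a).

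For part (b), fix $\delta>0$ and use the triangle inequality to bound the integrand by $f(\lambda)\varrho(\lambda/\sqrt{I_N}+\wh\theta_N)e^{-\lambda^2/2}+\varrho(\theta_0)f(\lambda)e^{-\lambda^2/2}$, both restricted to $\{|\lambda|>\delta\sqrt{I_N}\}$. The first term tends to $0$ $\bP$-a.s.\ as $N\to\infty$: this is \emph{precisely} condition (C2). The second term equals $\varrho(\theta_0)\int_{\{|\lambda|>\delta\sqrt{I_N}\}}f(\lambda)e^{-\lambda^2/2}\,\dif\lambda$, which is the tail of the convergent integral from (C1) (as $I_N\to\infty$, the domain shrinks to the empty set), hence also tends to $0$. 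Adding the two gives part (b).

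The main obstacle is the middle-range estimate in part (a): one cannot use uniform continuity of $\varrho$ over all of $\{|\lambda|\le\delta_0\sqrt{I_N}\}$ at once (the window of $\theta$-values the argument explores is genuinely of fixed size $2\delta_0$, but the $\lambda$-range is unbounded as $N\to\infty$), so the $\varepsilon$-$M$-$N$ ordering must be set up carefully — choose the tail cutoff $M$ first using (C1), exploiting that the constant bound on $|\varrho(\cdot)-\varrho(\theta_0)|$ over the whole neighborhood is $N$-independent, and only then send $N\to\infty$. Everything else is routine once strong consistency of $\wh\theta_N$ and the integrability furnished by (C1)--(C2) are in hand.
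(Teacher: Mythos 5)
Your argument is correct and follows essentially the same route as the paper's: pick $\delta_0>0$ so that $\varrho$ is bounded and continuous on $[\theta_0-2\delta_0,\theta_0+2\delta_0]$, use strong consistency of $\wh\theta_N$ to confine the argument of $\varrho$ to that interval a.s.\ for $N$ large, split at a finite threshold $M$, handle the near region via continuity and the far region via the uniform (in $N$) tail bound from (C1), and in part (b) apply the triangle inequality with (C2) for the first term and the shrinking tail from (C1) for the second. The only cosmetic difference is that on $\{|\lambda|\le M\}$ you invoke uniform continuity of $\varrho$ to get a uniform-in-$\lambda$ estimate, whereas the paper uses pointwise convergence plus dominated convergence — both are routine and interchangeable here.
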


\begin{proof}
Pick $\delta_{0}\in(0,\theta_{0}/2)$ such that $\varrho$ is continuous on $[\theta_{0}-2\delta_{0},\theta_{0}+2\delta_{0}]$. Recall that the Fisher information $I_{N}$ is unbounded, and thus, for any $C>0$, there exists $N\in\mathbb{N}$ so that $\delta_{0}\sqrt{I_{N}}>C$. We decompose the integral from part (a) as follows
\begin{align}\label{eq:DecompIntALemmaBvM}
\int_{\{|\lambda|\leq\delta_{0}\sqrt{I_{N}}\}}\!\!=\1_{\{|\wh{\theta}_{N}-\theta_{0}|>\delta_{0}\}}\!\int_{\{|\lambda|\leq\delta_{0}\sqrt{I_{N}}\}}\!\!+\1_{\{|\wh{\theta}_{N}-\theta_{0}|\leq\delta_{0}\}}\!\int_{\{|\lambda|\leq C\}}\!\!+\1_{\{|\wh{\theta}_{N}-\theta_{0}|\leq\delta_{0}\}}\!\int_{\{C<|\lambda|\leq\delta_{0}\sqrt{I_{N}}\}}.
\end{align}
By the strong consistency of $\wh{\theta}_{N}$, $|\wh{\theta}_{N}(\omega)-\theta_{0}|\leq\delta_{0}$, $\bP-$a.s., for sufficiently large $N$ (that may depend on $\omega$). Hence
\begin{align}\label{eq:LimitIntALemmaBvM1}
\lim_{N\rightarrow\infty}\1_{\{|\wh{\theta}_{N}-\theta_{0}|>\delta_{0}\}}\int_{\{|\lambda|\leq\delta_{0}\sqrt{I_{N}}\}}f(\lambda)\bigg|\varrho\bigg(\frac{\lambda}{\sqrt{I_{N}}}+\wh{\theta}_{N}\bigg)-\varrho(\theta_{0})\bigg|\,e^{-\lambda^{2}/2}\dif\lambda=0,\quad\bP-\text{a.s.}.
\end{align}
Moreover, by the strong consistency of $\wh{\theta}_{N}$ and the continuity of $\varrho$ on $[\theta_{0}-2\delta_{0},\theta_{0}+2\delta_{0}]$, for any $\lambda\in[-C,C]$,
\begin{align*}
\lim_{N\rightarrow\infty}\bigg|\varrho\bigg(\frac{\lambda}{\sqrt{I_{N}}}+\wh{\theta}_{N}\bigg)-\varrho(\theta_{0})\bigg|=0,\quad\bP-\text{a.s.}.
\end{align*}
Hence, by condition (C1) and the dominated convergence theorem,
\begin{align}\label{eq:LimitIntALemmaBvM2}
\lim_{N\rightarrow\infty}\1_{\{|\wh{\theta}_{N}-\theta_{0}|\leq\delta_{0}\}}\int_{\{|\lambda|\leq C\}}f(\lambda)\bigg|\varrho\bigg(\frac{\lambda}{\sqrt{I_{N}}}+\wh{\theta}_{N}\bigg)-\varrho(\theta_{0})\bigg|\,e^{-\lambda^{2}/2}\dif\lambda=0,\quad\bP-\text{a.s.}.
\end{align}
Finally, by  (C1) and the boundedness of $\varrho$ on $[\theta_{0}-2\delta_{0},\theta_{0}+2\delta_{0}]$,
\begin{align}
&\1_{\{|\widehat{\theta}_{N}-\theta_{0}|\leq\delta_{0}\}}\int_{C<|\lambda|\leq\delta_{0}\sqrt{I_{N}}}f(\lambda)\bigg|\varrho\bigg(\frac{\lambda}{\sqrt{I_{N}}}+\wh{\theta}_{N}\bigg)-\varrho(\theta_{0})\bigg|\,e^{-\lambda^{2}/2}\dif\lambda\nonumber\\
\label{eq:EstIntALemmaBvM3} &\quad\leq 2M\int_{\{|\lambda|>C\}}f(\lambda)\,e^{-\alpha\lambda^{2}/2}\,e^{-(1-\alpha)\lambda^{2}/2}\,\dif\lambda\leq 2M\,e^{-(1-\alpha)C^{2}/2}\int_{\bR}f(\lambda)\,e^{-\alpha\lambda^{2}/2}\,d\lambda,
\end{align}
where $M:=\sup_{\theta\in[\theta_{0}-2\delta_{0},\theta_{0}+2\delta_{0}]}\varrho(\theta)$. Combining \eqref{eq:DecompIntALemmaBvM}$-$\eqref{eq:EstIntALemmaBvM3}, and since $C>0$ is arbitrarily chosen, we complete the proof of part (a).

As for part (b), note that for any $\delta>0$,
\begin{align*}
&\int_{\{|\lambda|>\delta\sqrt{I_{N}}\}}f(\lambda)\bigg|\varrho\bigg(\frac{\lambda}{\sqrt{I_{N}}}+\wh{\theta}_{N}\bigg)-\varrho(\theta_{0})\bigg|\,e^{-\lambda^{2}/2}\dif\lambda\\
&\quad\leq\int_{\{|\lambda|>\delta\sqrt{I_{N}}\}}f(\lambda)\,\varrho\bigg(\frac{\lambda}{\sqrt{I_{N}}}+\wh{\theta}_{N}\bigg)\,e^{-\lambda^{2}/2}\dif\lambda+\varrho(\theta_{0})e^{-(1-\alpha)\delta^{2}I_{N}}\!\int_{\{|\lambda|>\delta\sqrt{I_{N}}\}}f(\lambda)\,e^{-\alpha\lambda^{2}/2}\dif\lambda,
\end{align*}
and in view of (C1) and (C2), (b) follows at once. The proof is now complete.
\end{proof}

\noindent
{\it Proof of Theorem \ref{thm:BvM}.} Since the constant function $f\equiv 1$ satisfies (C1) and (C2), by Lemma \ref{lem:BvM},
\begin{align*}
\lim_{N\rightarrow\infty}\int_{\bR}e^{-\lambda^{2}/2}\bigg|\varrho\bigg(\frac{\lambda}{\sqrt{I_{N}}}+\wh{\theta}_{N}\bigg)-\varrho(\theta_{0})\bigg|\dif\lambda=0,\quad\bP-\text{a.s.},
\end{align*}
which, together with \eqref{eq:DefCN}, implies that
\begin{align*}
\lim_{N\rightarrow\infty}C_{N}:=\lim_{N\rightarrow\infty}\int_{\bR}\varrho\bigg(\frac{\lambda}{\sqrt{I_{N}}}+\wh{\theta}_{N}\bigg)\,e^{-\lambda^{2}/2}\dif\lambda=\varrho(\theta_{0})\int_{\bR}e^{-\lambda^{2}/2}\dif\lambda=\sqrt{2\pi}\varrho(\theta_{0}),\quad\bP-\text{a.s.}.
\end{align*}
Therefore, by \eqref{eq:PosteriorLambda}, as $N\rightarrow\infty$,
\begin{align}
\int_{\bR}f(\lambda)\bigg|\wt{p}(\lambda\,|\,U_{N})-\frac{e^{-\lambda^{2}/2}}{\sqrt{2\pi}}\bigg|\dif\lambda&\leq C_{N}^{-1}\int_{\bR}f(\lambda)\bigg|\varrho\bigg(\frac{\lambda}{\sqrt{I_{N}}}+\wh{\theta}_{N}\bigg)-\varrho(\theta_{0})\bigg|\,e^{-\lambda^{2}/2}\dif\lambda\nonumber\\
\label{eq:fDiffTildep} &\quad +\bigg|C^{-1}_{N}\varrho(\theta_{0})-\frac{1}{\sqrt{2\pi}}\bigg|\int_{\bR}f(\lambda)\,e^{-\lambda^{2}/2}\dif\lambda\rightarrow 0,\quad\bP-\text{a.s.},
\end{align}
which completes the proof of the theorem.\hfill $\Box$

\medskip
In order to investigate the uniform asymptotic properties of $\wt{\beta}_{N}$, we need the following uniform version of Bernstein-Von Mises Theorem.

\begin{theorem}\label{thm:BvMUnifConvProb}
Let $\varrho\in\cQ$ be continuous and positive on $\bR_{+}$, and let $f$ be a non-negative, Borel-measurable function on $\bR$. Assume that $\varrho$ and $f$ satisfy \eqref{eq:ExpGrowPriorLossFunt}. Then, for any compact set $B\subset\bR_{+}$ and any $\varepsilon>0$,
\begin{align*}
\lim_{N\rightarrow\infty}\sup_{\theta_{0}\in B}\bP\bigg(\int_{\bR}f(\lambda)\bigg|\wt{p}(\lambda\,|\,U_{N})-\frac{e^{-\lambda^{2}/2}}{\sqrt{2\pi}}\bigg|\dif\lambda>\varepsilon\bigg)=0.
\end{align*}
\end{theorem}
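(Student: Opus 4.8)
The plan is to upgrade the $\bP$-almost sure convergence in Theorem \ref{thm:BvM} to convergence in probability, uniformly over the compact set $B$, by re-examining where the almost sure statement actually came from: the only probabilistic ingredient was the strong consistency of $\wh\theta_N$. I would first observe that, by the triangle-inequality bound in \eqref{eq:fDiffTildep}, it suffices to prove that, uniformly in $\theta_0\in B$,
\begin{align*}
\int_{\bR}f(\lambda)\bigg|\varrho\bigg(\frac{\lambda}{\sqrt{I_N}}+\wh\theta_N\bigg)-\varrho(\theta_0)\bigg|e^{-\lambda^2/2}\dif\lambda\xrightarrow{\ \bP\ }0
\end{align*}
and that $C_N\xrightarrow{\bP}\sqrt{2\pi}\varrho(\theta_0)$ uniformly (the latter being the special case $f\equiv 1$), together with a lower bound on $C_N$ that holds with probability tending to $1$ uniformly in $\theta_0\in B$ so that $C_N^{-1}$ is controlled. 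So the whole theorem reduces to the displayed convergence.

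**Key steps.** First I would fix $\varepsilon,\varepsilon'>0$ and recall the uniform weak consistency of $\wh\theta_N$ from Theorem \ref{thm:MLEUAN}(a) (or Proposition \ref{cor:MLEIniUAN}): there is $N_0$ so that $\sup_{\theta_0\in B}\bP(|\wh\theta_N-\theta_0|>\delta_0)<\varepsilon'$ for $N\ge N_0$, where $\delta_0$ is chosen (uniformly over the compact $B$) so that $\varrho$ is continuous and bounded on the $2\delta_0$-neighborhood of $B$; here one uses that $B$ is compact and $\varrho\in\cQ$ is continuous and positive on all of $\bR_+$, so $\inf_{B}\varrho>0$ and $\sup$ over a neighborhood of $B$ is finite. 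Second, on the event $\{|\wh\theta_N-\theta_0|\le\delta_0\}$ I would repeat verbatim the decomposition \eqref{eq:DecompIntALemmaBvM}: the first indicator term vanishes on this event; the middle term $\int_{|\lambda|\le C}$ is handled by uniform continuity of $\varrho$ on the compact neighborhood of $B$ (so the modulus is $\le\omega_\varrho(C/\sqrt{I_N}+\delta_0)\to 0$ as $N\to\infty$, a deterministic bound, using $I_N\to\infty$), times $\int f(\lambda)e^{-\lambda^2/2}\dif\lambda<\infty$ from \eqref{eq:ExpGrowPriorLossFunt}; the tail term $\int_{C<|\lambda|\le\delta_0\sqrt{I_N}}$ is bounded exactly as in \eqref{eq:EstIntALemmaBvM3} by $2M e^{-(1-\alpha)C^2/2}\int f(\lambda)e^{-\alpha\lambda^2/2}\dif\lambda$, with $M=\sup$ of $\varrho$ over the neighborhood of $B$, which can be made $<\varepsilon$ by choosing $C$ large, independently of $\theta_0$. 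Third, for the region $\{|\lambda|>\delta_0\sqrt{I_N}\}$ I would use the explicit exponential-growth estimate as in Remark \ref{rem:ExpGrowPriorLossFunt}, \eqref{eq:ExpGrowPriorLosFuntC2}: on $\{|\wh\theta_N-\theta_0|\le\delta_0\}$ one has $\varrho(\wh\theta_N+\lambda/\sqrt{I_N})\le c_1\exp(c_2(\theta_0+\delta_0+|\lambda|/\sqrt{I_N})^r)$, and since $\sup_{\theta_0\in B}\theta_0<\infty$ the supremum factor in \eqref{eq:ExpGrowPriorLosFuntC2} is bounded by a constant uniform in $\theta_0\in B$ and $N$, while $\int_{|\lambda|>\delta_0\sqrt{I_N}}\exp(c_2|\lambda|^r-\lambda^2/4)\dif\lambda\to 0$ deterministically because $I_N\to\infty$ and $r<2$. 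Putting these together: off an event of probability $<\varepsilon'$ (uniform in $\theta_0\in B$, for $N\ge N_0$), the integral is bounded by $\omega_\varrho(C/\sqrt{I_N}+\delta_0)\!\int f e^{-\lambda^2/2} + 2Me^{-(1-\alpha)C^2/2}\!\int f e^{-\alpha\lambda^2/2} + (\text{tail})$, which can be made arbitrarily small by first fixing $C$ large and then $N$ large. Fourth, I would deduce the uniform convergence $C_N\xrightarrow{\bP}\sqrt{2\pi}\varrho(\theta_0)$ from the $f\equiv 1$ case, and since $\inf_{\theta_0\in B}\varrho(\theta_0)>0$, on the complement of an event of small uniform probability we get $C_N^{-1}\le 2/(\sqrt{2\pi}\inf_B\varrho)$; combining with \eqref{eq:fDiffTildep} finishes the argument.

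**Main obstacle.** The routine calculations are all already done in Lemma \ref{lem:BvM} and Remark \ref{rem:ExpGrowPriorLossFunt}; the only genuine work is making every constant that appeared there uniform over $\theta_0\in B$. The step I expect to require the most care is replacing the single pointwise use of strong consistency of $\wh\theta_N$ by the uniform weak consistency on $B$ while simultaneously ensuring that the thresholds $\delta_0$, the continuity modulus $\omega_\varrho$, the bound $M$, and the supremum factor in \eqref{eq:ExpGrowPriorLosFuntC2} are all chosen once and for all for the whole neighborhood of $B$; this works precisely because $B$ is compact, $\varrho$ is continuous and strictly positive on $\bR_+$ (hence bounded above and below away from zero on a compact neighborhood of $B$), and the quantities $e^{-(1-\alpha)\delta_0^2 I_N}$, $\int_{|\lambda|>\delta_0\sqrt{I_N}}(\cdot)\dif\lambda$ and $\omega_\varrho(C/\sqrt{I_N})$ are deterministic and governed solely by $I_N\to\infty$. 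A minor secondary point is that one should state the argument in the contrapositive form $\bP(\,\cdot\,>\varepsilon)\to 0$ rather than trying to extract an almost sure subsequence, so that the ``off an event of probability $<\varepsilon'$'' bookkeeping goes through cleanly.
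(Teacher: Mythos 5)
Your proposal is correct and follows essentially the same route as the paper: the paper's proof first establishes the uniform-in-probability analogue of Lemma~\ref{lem:BvM} (its Lemma~\ref{lem:BvMUnifConvProb}) by replacing strong consistency with the uniform weak consistency of $\wh\theta_N$ from Proposition~\ref{cor:MLEIniUAN}, using the decomposition \eqref{eq:DecompIntALemmaBvM}, uniform continuity of $\varrho$ on a compact neighborhood $B_{\delta_0}$ of $B$, and the deterministic exponential-growth tail bound from Remark~\ref{rem:ExpGrowPriorLossFunt}, then combines everything through the same triangle-inequality estimate as in \eqref{eq:fDiffTildep}, controlling $C_N^{-1}$ via $L:=\inf_{\theta\in B}\varrho(\theta)>0$. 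Your identification of the single substantive modification (uniform choice of $\delta_0$, the continuity modulus, the bound $M$, and the supremum factor, all possible by compactness of $B$ and positivity of $\varrho$ on $\bR_+$) matches the paper's argument.
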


The following technical lemma is a key ingredient for the proof of Theorem \ref{thm:BvMUnifConvProb}.

\begin{lemma}\label{lem:BvMUnifConvProb}
Under the conditions of Theorem \ref{thm:BvMUnifConvProb}, for any compact set $B\subset\bR_{+}$,
\begin{itemize}
\item [(a)] there exists $\delta_{0}>0$, such that for any $\varepsilon>0$,
    \begin{align*}
    \lim_{N\rightarrow\infty}\sup_{\theta_{0}\in B}\bP\left(\int_{\{|\lambda|\leq\delta_{0}\sqrt{I_{N}}\}}f(\lambda)\bigg|\varrho
    \bigg(\frac{\lambda}{\sqrt{I_{N}}}+\wh{\theta}_{N}\bigg)-\varrho(\theta_{0})\bigg|\,e^{-\lambda^{2}/2}\dif\lambda>\varepsilon\right)=0;
    \end{align*}
\item [(b)] for any $\delta>0$ and $\varepsilon>0$,
    \begin{align*}
    \lim_{N\rightarrow\infty}\sup_{\theta_{0}\in B}\bP\left(\int_{\{|\lambda|>\delta\sqrt{I_{N}}\}}f(\lambda)\bigg|\varrho\bigg(\frac{\lambda}{\sqrt{I_{N}}}+\wh{\theta}_{N}\bigg)
    -\varrho(\theta_{0})\bigg|\,e^{-\lambda^{2}/2}\dif\lambda>\varepsilon\right)=0.
    \end{align*}
\end{itemize}
\end{lemma}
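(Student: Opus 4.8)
The plan is to mimic the almost-sure argument in Lemma \ref{lem:BvM}, but replace every appeal to the strong consistency of $\wh\theta_N$ by its weak consistency, uniform in $\theta_0\in B$ (Proposition \ref{cor:MLEIniUAN}(a), equivalently Theorem \ref{thm:MLEUAN}(a)), and keep all deterministic estimates uniform over the compact set $B$. Throughout, fix a compact $B\subset\bR_+$ and choose $\delta_0>0$ small enough that the closed $2\delta_0$-neighborhood of $B$ stays inside $\bR_+$; since $\varrho$ is continuous and positive on $\bR_+$, the quantity $M_B:=\sup\{\varrho(\theta):\operatorname{dist}(\theta,B)\le 2\delta_0\}$ is finite, and the modulus of continuity of $\varrho$ on this neighborhood is finite and uniform. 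Also recall $I_N\to\infty$, a deterministic fact independent of $\theta_0$.

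For part (a), I would split the integral exactly as in \eqref{eq:DecompIntALemmaBvM}, writing it as the sum of three pieces according to whether $|\wh\theta_N-\theta_0|>\delta_0$, and, on the complementary event, whether $|\lambda|\le C$ or $C<|\lambda|\le\delta_0\sqrt{I_N}$, for a large constant $C$ to be sent to infinity afterwards. The first piece is handled by $\1_{\{|\wh\theta_N-\theta_0|>\delta_0\}}$ and the uniform bound $\varrho(\lambda/\sqrt{I_N}+\wh\theta_N)\le c_1 e^{c_2(\theta_0+1+|\lambda|/\sqrt{I_N})^r}$ on the event $\{\wh\theta_N+\lambda/\sqrt{I_N}>0\}$, so that the whole integral is dominated by a deterministic constant (uniform over $B$, using $I_N\ge 1$ for large $N$); thus its $\bP$-probability of exceeding $\varepsilon$ is bounded by $\bP(|\wh\theta_N-\theta_0|>\delta_0)$, which tends to $0$ uniformly in $\theta_0\in B$ by weak consistency. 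The third piece, on $\{|\wh\theta_N-\theta_0|\le\delta_0\}$, is bounded deterministically by $2M_B e^{-(1-\alpha)C^2/2}\int_{\bR}f(\lambda)e^{-\alpha\lambda^2/2}\dif\lambda$ exactly as in \eqref{eq:EstIntALemmaBvM3}, which is $<\varepsilon$ once $C$ is large — giving probability $0$ for those $C$. The genuinely new point is the middle piece $\1_{\{|\wh\theta_N-\theta_0|\le\delta_0\}}\int_{\{|\lambda|\le C\}}f(\lambda)|\varrho(\lambda/\sqrt{I_N}+\wh\theta_N)-\varrho(\theta_0)|e^{-\lambda^2/2}\dif\lambda$: here I use uniform continuity of $\varrho$ on the $2\delta_0$-neighborhood of $B$ to get that, on $\{|\wh\theta_N-\theta_0|\le\delta_0\}\cap\{|\wh\theta_N-\theta_0|\le\delta\}$ and $|\lambda|\le C$, the argument $\lambda/\sqrt{I_N}+\wh\theta_N$ is within $\delta + C/\sqrt{I_N}$ of $\theta_0$, hence $|\varrho(\cdot)-\varrho(\theta_0)|\le\omega_\varrho(\delta+C/\sqrt{I_N})$, which can be made smaller than any prescribed level by first taking $\delta$ small and then $N$ large. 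This bounds the middle piece by $\omega_\varrho(\delta+C/\sqrt{I_N})\int_{\bR}f e^{-\lambda^2/2}$ plus the term coming from the complement $\{|\wh\theta_N-\theta_0|>\delta\}$, whose probability again vanishes uniformly over $B$ by weak consistency; choosing $\delta$ then $N$ makes the whole thing $<\varepsilon$ with probability $\to 1$ uniformly in $\theta_0\in B$.

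Part (b) is the easier direction: as in the proof of Lemma \ref{lem:BvM}(b), bound the integrand's $|\varrho(\cdot)-\varrho(\theta_0)|$ by $\varrho(\lambda/\sqrt{I_N}+\wh\theta_N)+\varrho(\theta_0)$, split into two integrals, and estimate $\varrho(\lambda/\sqrt{I_N}+\wh\theta_N)\le c_1 e^{c_2(\theta_0+1+|\lambda|/\sqrt{I_N})^r}$ on $\{\wh\theta_N+\lambda/\sqrt{I_N}>0\}$, which via \eqref{eq:ExpGrowPriorLossFunt} and the computation \eqref{eq:ExpGrowPriorLosFuntC2} yields a deterministic bound $c_1^2 \sup_{N,\lambda}\exp((\theta_0+1+|\lambda|/\sqrt{I_N})^r-\lambda^2/4)\cdot\int_{\{|\lambda|>\delta\sqrt{I_N}\}}\exp(c_2|\lambda|^r-\lambda^2/4)\dif\lambda$ that is uniform over $\theta_0\in B$ (since $B$ is bounded) and tends to $0$ as $N\to\infty$ because $I_N\to\infty$; similarly $\varrho(\theta_0)e^{-(1-\alpha)\delta^2 I_N}\int f e^{-\alpha\lambda^2/2}\to 0$ uniformly. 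So on $\{\wh\theta_N+\lambda/\sqrt{I_N}>0\}$ the integral is deterministically $o(1)$ uniformly in $\theta_0\in B$, and the event itself has probability $\to 1$ uniformly by weak consistency; hence the probability in (b) $\to 0$ uniformly. I expect the main obstacle to be keeping the constants honestly uniform over $B$ in the middle piece of part (a) — in particular justifying one uniform modulus of continuity for $\varrho$ and one uniform exponential majorant $\sup_{N,\lambda}\exp((\sup B+1+|\lambda|/\sqrt{I_N})^r-\lambda^2/4)<\infty$ (valid since $r<2$) — but this is bookkeeping rather than a conceptual difficulty.
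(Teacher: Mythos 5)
Your proposal mirrors the paper's proof: the same decomposition \eqref{eq:DecompIntALemmaBvM} with strong consistency replaced by uniform weak consistency, the same $C<|\lambda|\leq\delta_0\sqrt{I_N}$ tail estimate via \eqref{eq:EstIntALemmaBvM3}, the same uniform-continuity-plus-large-$N$ step for the $|\lambda|\le C$ piece, and essentially the same argument for part (b) via the exponential majorant \eqref{eq:ExpGrowPriorLosFuntC2} restricted to an event of uniformly high probability. Two small glosses: the claim that the first piece in (a) is "dominated by a deterministic constant" is false on $\{|\wh\theta_N-\theta_0|>\delta_0\}$ (but is also unnecessary, since the indicator alone gives the probability bound), and in (b) the exponential majorant $c_1e^{c_2(\theta_0+1+|\lambda|/\sqrt{I_N})^r}$ holds on an event like $\{\wh\theta_N\le\theta_0+1\}$, not on $\{\wh\theta_N+\lambda/\sqrt{I_N}>0\}$ as written---the intended event is supplied by uniform weak consistency, as you note, so the argument goes through once this is made explicit.
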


\begin{proof}
The proof follows the same line as that of Lemma \ref{lem:BvM}. Pick $\delta_{0}\in(0,\inf_{\theta\in B}(\theta/2))$. Since $I_{N}\to+\infty$, for any $C>0$, there exists $N\in\mathbb{N}$ so that $\delta_{0}\sqrt{I_{N}}>C$. We decompose the integral from part (a) as in \eqref{eq:DecompIntALemmaBvM}. By the uniform weak consistency of $\wh{\theta}_{N}$ (property (a) in Corollary \ref{cor:MLEIniUAN}), we have that
\begin{align}
&\lim_{N\rightarrow\infty}\sup_{\theta_{0}\in B}\bP\left(\1_{\{|\wh{\theta}_{N}-\theta_{0}|>\delta_{0}\}}\int_{\{|\lambda|\leq\delta_{0}\sqrt{I_{N}}\}}f(\lambda)\bigg|\varrho\bigg(\frac{\lambda}{\sqrt{I_{N}}}+\wh{\theta}_{N}\bigg)-\varrho(\theta_{0})\bigg|\,e^{-\lambda^{2}/2}\dif\lambda>\frac{\varepsilon}{3}\right)\nonumber\\
\label{eq:BvMUnifConvProbA1} &\quad\leq\lim_{N\rightarrow\infty}\sup_{\theta_{0}\in B}\bP\Big(\big|\wh{\theta}_{N}-\theta_{0}\big|>\delta_{0}\Big)=0.
\end{align}
Moreover, for any $\gamma\in(0,\inf_{\theta\in B}\theta)$, let $M_{\gamma}:=\sup_{\{\theta\in B_{\gamma}\}}\varrho(\theta)$, where $B_{\gamma}:=\{\theta\in\bR_{+}:\,\inf_{\eta\in B}|\eta-\theta|\leq\gamma\}$ is a compact subset of $\bR_{+}$. Since $C>0$ is arbitrary, an argument similar to those leading to \eqref{eq:EstIntALemmaBvM3} implies that
\begin{align}
&\sup_{\theta_{0}\in B}\bP\left(\1_{\{|\wh{\theta}_{N}-\theta|\leq\delta_{0}\}}\int_{\{C<|\lambda|\leq\delta_{0}\sqrt{I_{N}}\}}f(\lambda)\bigg|\varrho\bigg(\frac{\lambda}{\sqrt{I_{N}}}+\wh{\theta}_{N}\bigg)-\varrho(\theta)\bigg|\,e^{-\lambda^{2}/2}\dif\lambda>\frac{\varepsilon}{3}\right)\nonumber\\
\label{eq:BvMUnifConvProbA2} &\quad\leq\sup_{\theta_{0}\in B}\bP\bigg(2M_{2\delta_{0}}\,e^{-(1-\alpha)C^{2}/2}\int_{\bR}f(\lambda)\,e^{-\alpha\lambda^{2}/2}\,d\lambda>\frac{\varepsilon}{3}\bigg)=0.
\end{align}
Finally, since $\varrho$ is uniformly continuous on $B_{\delta_{0}}$, there exists $\delta_{1}\in(0,\delta_{0})$ such that, for any $\theta,\eta\in B_{\delta_{0}}$ with $|\theta-\eta|\leq\delta_{1}$, $|\varrho(\theta)-\varrho(\eta)|\leq\varepsilon/(6CK)$, where $K:=\sup_{|\lambda|\leq C}f(\lambda)<\infty$ in view of \eqref{eq:ExpGrowPriorLossFunt}. By the uniform weak consistency of $\wh{\theta}_{N}$ (property (a) in Corollary \ref{cor:MLEIniUAN}), for $N\in\bN$ large enough (so that $C/\sqrt{I_{N}}\leq\delta_{1}/2$), we deduce that
\begin{align}
&\sup_{\theta_{0}\in B}\bP\left(\1_{\{|\wh{\theta}_{N}-\theta_{0}|\leq\delta_{0}\}}\int_{\{|\lambda|\leq C\}}f(\lambda)\bigg|\varrho\bigg(\frac{\lambda}{\sqrt{I_{N}}}+\wh{\theta}_{N}\bigg)-\varrho(\theta_{0})\bigg|\,e^{-\lambda^{2}/2}\dif\lambda>\frac{\varepsilon}{3}\right)\nonumber\\
&\quad\leq\sup_{\theta_{0}\in B}\bP\left(\sup_{|\lambda|\leq C}\bigg|\varrho\bigg(\frac{\lambda}{\sqrt{I_{N}}}+\wh{\theta}_{N}\bigg)-\varrho(\theta_{0})\bigg|>\frac{\varepsilon}{6CK},\,\big|\wh{\theta}_{N}-\theta_{0}\big|\leq\frac{\delta_{1}}{2}\right)+\sup_{\theta_{0}\in B}\bP\bigg(\big|\wh{\theta}_{N}-\theta_{0}\big|>\frac{\delta_{1}}{2}\bigg)\nonumber\\
\label{eq:BvMUnifConvProbA3} &\quad =\sup_{\theta_{0}\in B}\bP\bigg(\big|\wh{\theta}_{N}-\theta_{0}\big|>\frac{\delta_{1}}{2}\bigg)\rightarrow 0,\quad N\rightarrow\infty.
\end{align}
Combining \eqref{eq:BvMUnifConvProbA1}$-$\eqref{eq:BvMUnifConvProbA3} completes the proof of part (a).

As for part (b), using similar arguments to those used in establishing \eqref{eq:ExpGrowPriorLosFuntC2}, for any $\delta>0$ and $\delta_{1}\in(0,\inf_{\theta\in B}\theta)$, we have that, as $N\rightarrow\infty$,
\begin{align*}
&\sup_{\theta\in B_{\delta_{1}}}\int_{\{|\lambda|>\delta\sqrt{I_{N}}\}}f(\lambda)\,\varrho\bigg(\frac{\lambda}{\sqrt{I_{N}}}+\theta\bigg)\,e^{-\lambda^{2}/2}\dif\lambda\\
&\quad\leq c_{1}^{2}\sup_{\theta\in B_{\delta_{1}},\lambda\in\bR,N\in\bN}\left(\exp\bigg(\bigg(\theta+\frac{\lambda}{\sqrt{I_N}}\bigg)^{r}-\frac{\lambda^{2}}{4}\bigg)\right)\cdot\int_{\{|\lambda|>\sqrt{I_{N}}\delta\}}\exp\bigg(c_{2}|\lambda|^{r}-\frac{\lambda^{2}}{4}\bigg)\dif\lambda\rightarrow 0.
\end{align*}
This, together with the uniform weak consistency of $\wh{\theta}_{N}$ (property (a) in Corollary~\ref{cor:MLEIniUAN}), implies that, for any $\varepsilon>0$ and $\delta>0$,
\begin{align*}
&\sup_{\theta_{0}\in B}\bP\left(\int_{\{|\lambda|>\delta\sqrt{I_{N}}\}}f(\lambda)\bigg|\varrho\bigg(\frac{\lambda}{\sqrt{I_{N}}}+\wh{\theta}_{N}\bigg)-\varrho(\theta_{0})\bigg|\,e^{-\lambda^{2}/2}\dif\lambda>\varepsilon\right)\\
&\quad\leq\sup_{\theta_{0}\in B}\bP\left(\int_{\{|\lambda|>\delta\sqrt{I_{N}}\}}f(\lambda)\,\varrho\bigg(\frac{\lambda}{\sqrt{I_{N}}}+\wh{\theta}_{N}\bigg)\,e^{-\lambda^{2}/2}\dif\lambda>\frac{\varepsilon}{2}\right)\\
&\qquad +\bP\left(\sup_{\theta_{0}\in B}\varrho(\theta_{0})\cdot\int_{\{|\lambda|>\delta\sqrt{I_{N}}\}}f(\lambda)\,e^{-\lambda^{2}/2}\dif\lambda>\frac{\varepsilon}{2}\right)\\
&\quad\leq\sup_{\theta_{0}\in B}\bP\left(\int_{\{|\lambda|>\delta\sqrt{I_{N}}\}}f(\lambda)\,\varrho\bigg(\frac{\lambda}{\sqrt{I_{N}}}+\wh{\theta}_{N}\bigg)\,e^{-\lambda^{2}/2}\dif\lambda>\frac{\varepsilon}{2},\,\wh{\theta}_{N}\in B_{\delta_{1}}\right)+\sup_{\theta_{0}\in B}\bP\big(\wh{\theta}_{N}\notin B_{\delta_{1}}\big)\\
&\quad\leq\bP\left(\sup_{\theta\in B_{\delta_{1}}}\int_{\{|\lambda|>\delta\sqrt{I_{N}}\}}f(\lambda)\,\varrho\bigg(\frac{\lambda}{\sqrt{I_{N}}}+\theta\bigg)\,e^{-\lambda^{2}/2}\dif\lambda>\frac{\varepsilon}{2}\right)+\sup_{\theta_{0}\in B}\bP\Big(\big|\wh{\theta}_{N}-\theta_{0}\big|>\delta_{1}\Big)\rightarrow 0,
\end{align*}
as $N\rightarrow\infty$, which completes the proof of part (b).
\end{proof}

\noindent
\textit{Proof of Theorem \ref{thm:BvMUnifConvProb}.} Since $B\subset\bR_{+}$ is compact and $\rho$ is positive on $\bR_{+}$, $L:=\inf_{\theta\in B}\varrho(\theta)>0$. For any $\varepsilon>0$, by \eqref{eq:PosteriorLambda} and \eqref{eq:DefCN}, we have that
\begin{align}
&\sup_{\theta_{0}\in B}\bP\left(\int_{\bR}f(\lambda)\,\bigg|\wt{p}(\lambda\,|\,U_{N})-\frac{e^{-\lambda^{2}/2}}{\sqrt{2\pi}}\bigg|\dif\lambda>\varepsilon\right)\nonumber\\
&\quad\leq\sup_{\theta_{0}\in B}\bP\left(C_{N}^{-1}\int_{\bR}f(\lambda)\bigg|\varrho\bigg(\frac{\lambda}{\sqrt{I_{N}}}+\wh{\theta}_{N}\bigg)-\varrho(\theta_{0})\bigg|\,e^{-\lambda^{2}/2}\dif\lambda>\frac{\varepsilon}{2}\right)\nonumber\\
&\qquad +\sup_{\theta_{0}\in B}\bP\left(\bigg|C^{-1}_{N}\varrho(\theta_{0})-\frac{1}{\sqrt{2\pi}}\bigg|\int_{\bR}f(\lambda)\,e^{-\lambda^{2}/2}\dif\lambda>\frac{\varepsilon}{2}\right)\nonumber\\
&\quad\leq\sup_{\theta_{0}\in B}\bP\left(\int_{\bR}f(\lambda)\bigg|\varrho\bigg(\frac{\lambda}{\sqrt{I_{N}}}+\wh{\theta}_{N}\bigg)-\varrho(\theta_{0})\bigg|\,e^{-\lambda^{2}/2}\dif\lambda>\frac{\varepsilon}{2}C_{N},\,C_{N}\geq\frac{L}{2}\right)\nonumber\\
&\qquad +2\sup_{\theta_{0}\in B}\bigg(C_{N}<\frac{L}{2}\bigg)+\sup_{\theta_{0}\in B}\bP\left(\big|\sqrt{2\pi}\varrho(\theta_{0})-C_{N}\big|\int_{\bR}f(\lambda)\,e^{-\lambda^{2}/2}\dif\lambda>\frac{\sqrt{2\pi}\varepsilon}{2}C_{N},\,C_{N}\geq\frac{L}{2}\right)\nonumber\\
&\quad\leq\sup_{\theta_{0}\in B}\bP\left(\int_{\bR}f(\lambda)\bigg|\varrho\bigg(\frac{\lambda}{\sqrt{I_{N}}}+\wh{\theta}_{N}\bigg)-\varrho(\theta_{0})\bigg|\,e^{-\lambda^{2}/2}\dif\lambda>\frac{L\varepsilon}{4}\right)+2\sup_{\theta_{0}\in B}\bigg(\big|C_{N}-\sqrt{2\pi}\varrho(\theta_{0})\big|>\frac{L}{2}\bigg)\nonumber\\
\label{eq:DecompBvMUnifConvProb} &\qquad +\sup_{\theta_{0}\in B}\bP\left(\big|\sqrt{2\pi}\varrho(\theta_{0})-C_{N}\big|\int_{\bR}f(\lambda)\,e^{-\lambda^{2}/2}\dif\lambda>\frac{\sqrt{2\pi}L\varepsilon}{4}\right).
\end{align}
By Lemma \ref{lem:BvMUnifConvProb}, clearly we have
\begin{align}\label{eq:LimitBvMUnifConvProb1}
\lim_{N\rightarrow\infty}\sup_{\theta_{0}\in B}\bP\left(\int_{\bR}f(\lambda)\bigg|\varrho\bigg(\frac{\lambda}{\sqrt{I_{N}}}+\wh{\theta}_{N}\bigg)-\varrho(\theta_{0})\bigg|\,e^{-\lambda^{2}/2}\dif\lambda>\frac{L\varepsilon}{4}\right)=0.
\end{align}
Moreover, for any $\delta>0$, by \eqref{eq:DefCN} and Lemma \ref{lem:BvMUnifConvProb} (with $f\equiv 1$) we have, as $N\rightarrow\infty$, that
\begin{align}\label{eq:LimitBvMUnifConvProb2}
\sup_{\theta_{0}\in B}\bP\Big(\big|C_{N}-\sqrt{2\pi}\varrho(\theta_{0})\big|>\delta\Big)\leq\sup_{\theta_{0}\in B}\bP\left(\int_{\bR}\bigg|\varrho\bigg(\frac{\lambda}{\sqrt{I_{N}}}+\wh{\theta}_{N}\bigg)-\varrho(\theta_{0})\bigg|\,e^{-\lambda^{2}/2}\dif\lambda>\delta\right)\rightarrow 0.
\end{align}
Combining \eqref{eq:DecompBvMUnifConvProb}$-$\eqref{eq:LimitBvMUnifConvProb2} completes the proof of the theorem.\hfill$\Box$

\subsection{Asymptotic Properties of $\wt{\beta}_{N}$}\label{sec:AsymTildeBetaN}

With the help of Theorems \ref{thm:BvM} and \ref{thm:BvMUnifConvProb}, we will now study the asymptotic properties of $\wt{\beta}_N$ with respect to the set of loss functions $\ell$ and prior densities $\varrho$ which have exponential growth rates.

\begin{theorem}\label{thm:BayeUANExp}
Let $\varrho\in\cQ_{e,2}$ and $\ell\in\mathbf{W}_{e,2}$. Assume that $r\mapsto\int_{\bR}{\ell}(\lambda+r)e^{-\lambda^{2}/2}\dif\lambda$ has a strict minimum at $r=0$, and that $\wt{\beta}_{N}$ is well defined with respect to each loss function $\ell\in\mathbf{W}_{e,2}$, for every $N\in\bN$. Then,
\begin{align}\label{eq:ConvasWTBetaN}
\lim_{N\rightarrow\infty}\sqrt{I_{N}}\big(\wt{\beta}_{N}-\wh{\theta}_{N}\big)=0,\quad\bP-\text{a.s.}.
\end{align}
Moreover, for any compact set $B\subset\bR_{+}$ and any $\varepsilon>0$,
\begin{align}\label{eq:UnifConvProbWTBetaN}
\lim_{N\rightarrow\infty}\sup_{\theta_{0}\in B}\bP\Big(\sqrt{I_{N}}\big|\wt{\beta}_{N}-\wh{\theta}_{N}\big|>\varepsilon\Big)=0.
\end{align}
\end{theorem}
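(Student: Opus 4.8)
The plan is to rewrite the optimality condition \eqref{eq:WTBetaNPosteriorLambda} defining $\wt\beta_N$ as the minimization of a random functional that, by the Bernstein--Von Mises theorems just proved, converges to a deterministic functional with a strict minimum at the origin, and then to deduce that the minimizer tends to $0$; the argument is formally the same for \eqref{eq:ConvasWTBetaN} (using Theorem~\ref{thm:BvM} and the strong consistency \eqref{eq:MLEIniStrConst} of $\wh\theta_N$) and for \eqref{eq:UnifConvProbWTBetaN} (using Theorem~\ref{thm:BvMUnifConvProb} and the uniform weak consistency of $\wh\theta_N$, Proposition~\ref{cor:MLEIniUAN}). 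Concretely, the substitution $s=\sqrt{I_N}(\wh\theta_N-\beta)$ shows that $s_N^{\ast}:=\sqrt{I_N}(\wh\theta_N-\wt\beta_N)$ minimizes
\[
\Psi_N(s):=\int_{\bR}\ell(\lambda+s)\,\wt p(\lambda\,|\,U_N)\,\dif\lambda
\]
over $s<\sqrt{I_N}\wh\theta_N$, and $\sqrt{I_N}(\wt\beta_N-\wh\theta_N)=-s_N^{\ast}$, so it suffices to show $s_N^{\ast}\to0$ in the respective modes of convergence. The candidate limit is $\Psi(s):=(2\pi)^{-1/2}\int_{\bR}\ell(\lambda+s)e^{-\lambda^2/2}\dif\lambda$, which does not depend on $\theta_0$, is finite and continuous (since $\ell\in\mathbf{W}_{e,2}$), is bounded above by $L_\infty:=\lim_{x\to\infty}\ell(x)$, has a strict minimum at $s=0$ by hypothesis, and satisfies $\Psi(0)<L_\infty$ because $\ell(0)=0$, $\ell$ is continuous at $0$, and $\ell\not\equiv0$.

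First I would prove local uniform convergence $\Psi_N\to\Psi$. For $|s|\le M$, the monotonicity and symmetry of $\ell$ give $\ell(\lambda+s)\le\ell(|\lambda|+M)$, and $\lambda\mapsto\ell(|\lambda|+M)$ obeys the exponential bound \eqref{eq:ExpGrowPriorLossFunt}; hence $\sup_{|s|\le M}|\Psi_N(s)-\Psi(s)|$ is at most $\int_{\bR}\ell(|\lambda|+M)\,|\wt p(\lambda\,|\,U_N)-(2\pi)^{-1/2}e^{-\lambda^2/2}|\,\dif\lambda$, which tends to $0$ $\bP$-a.s.\ by Theorem~\ref{thm:BvM} (via Remark~\ref{rem:ExpGrowPriorLossFunt}), and whose probability of exceeding any fixed level tends to $0$ uniformly over $\theta_0\in B$ by Theorem~\ref{thm:BvMUnifConvProb}. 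In particular $\Psi_N(0)\to\Psi(0)$ in the appropriate sense; and since $\wh\theta_N\to\theta_0>0$ (resp.\ $\sup_{\theta_0\in B}\bP(\wh\theta_N\le0)\to0$), the point $s=0$ is eventually feasible, so $\Psi_N(s_N^{\ast})\le\Psi_N(0)$.

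Next I would fix $\varepsilon>0$ and set $\delta:=\inf_{\varepsilon\le|s|\le M}\Psi(s)-\Psi(0)$, which is positive by compactness, continuity and strict minimality, and which satisfies $\delta\le L_\infty-\Psi(0)$, so that $\Psi(0)+\delta/3<L_\infty$. One then chooses $\eta>0$ small, then $K$ with $\Phi(K)\ge1-\eta/2$, then $M\ge K$ large enough, so that $(1-\eta)\ell(M-K)>\Psi(0)+\delta/3$. For $s\ge M$ one has $\ell(\lambda+s)\ge\ell(M-K)$ on $\{\lambda\ge-K\}$, hence $\Psi_N(s)\ge\ell(M-K)\int_{\{\lambda\ge-K\}}\wt p(\lambda\,|\,U_N)\,\dif\lambda$, and this posterior mass exceeds $1-\eta$ for $N$ large by Theorem~\ref{thm:BvM} applied to $f=\1_{[-K,\infty)}$ (resp.\ Theorem~\ref{thm:BvMUnifConvProb}); a symmetric estimate on $\{\lambda\le K\}$ handles $s\le-M$. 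Thus, on the event where $\Psi_N(0)<\Psi(0)+\delta/3$, $\sup_{|s|\le M}|\Psi_N-\Psi|<\delta/3$, and the two relevant posterior masses exceed $1-\eta$ — an event of full probability for large $N$ in the a.s.\ setting and of probability tending to $1$ uniformly over $\theta_0\in B$ — one obtains $\Psi_N(s)>\Psi(0)+\delta/3>\Psi_N(0)\ge\Psi_N(s_N^{\ast})$ for every feasible $s$ with $|s|\ge\varepsilon$ (on $\varepsilon\le|s|\le M$ because then $\Psi_N(s)\ge\Psi(s)-\delta/3\ge\Psi(0)+2\delta/3$, and on $|s|>M$ by the preceding bound). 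Hence $|s_N^{\ast}|<\varepsilon$, and since $\varepsilon$ was arbitrary this yields $s_N^{\ast}\to0$ $\bP$-a.s.\ and $\sup_{\theta_0\in B}\bP(|s_N^{\ast}|>\varepsilon)\to0$, i.e.\ \eqref{eq:ConvasWTBetaN} and \eqref{eq:UnifConvProbWTBetaN}.

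I expect the one genuine obstacle to be this no-escape step: since the loss $\ell$ need not be coercive ($L_\infty$ may be finite), one cannot simply claim $\Psi_N(s)\to\infty$ and must instead exploit the strict gap $\Psi(0)<L_\infty$ together with the Bernstein--Von Mises fact that the posterior of $\Lambda=\sqrt{I_N}(\Theta-\wh\theta_N)$ concentrates almost all of its mass on a fixed bounded set, which is precisely what forces $\Psi_N(s)$ above $\Psi(0)$ for all large $|s|$, uniformly. The local part is routine once the (uniform) Bernstein--Von Mises theorem and the consistency of $\wh\theta_N$ are available, and the mismatch between the true normalizer $C_N\to\sqrt{2\pi}\,\varrho(\theta_0)$ of $\wt p$ and the constant $(2\pi)^{-1/2}$ appearing in $\Psi$ is harmless because everything is controlled through the $L^1$-type convergence \eqref{eq:BvM}.
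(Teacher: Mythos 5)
Your proposal is correct and in fact arrives at the same two ingredients the paper relies on — the Bernstein--Von Mises theorems (\ref{thm:BvM} and \ref{thm:BvMUnifConvProb}) and the strict minimality of $r\mapsto\int_\bR\ell(\lambda+r)e^{-\lambda^2/2}\,\dif\lambda$ at $r=0$ — but it organizes the argument differently. You phrase it as ``local uniform convergence of the objectives $\Psi_N\to\Psi$ on compacts'' plus a ``no-escape'' lower bound at large $|s|$ obtained from the posterior mass $\int_{[-K,K]}\widetilde p(\lambda\,|\,U_N)\,\dif\lambda\ge 1-\eta$ (i.e.\ BvM applied to an indicator function). The paper instead works with a single chain of probability inequalities: it bounds $\bP(\sqrt{I_N}|\widetilde\beta_N-\widehat\theta_N|>\varepsilon)$ by the probability that $\inf_{|r|>\varepsilon}\int\ell(\lambda+r)\widetilde p\,\dif\lambda$ is not above $\psi(0)$ plus a BvM error term, and then handles the tail $|r|$ large by a monotonicity reduction: after truncating the integral to $[-K,K]$, the map $|r|\mapsto\int_{-K}^K\ell(\lambda+r)\widetilde p\,\dif\lambda$ is non-decreasing for $|r|>\varepsilon+1+2K$, which replaces the infimum over all $|r|>\varepsilon$ by an infimum over the bounded interval $(\varepsilon,\varepsilon+1+2K)$, where BvM applies directly. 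Your version makes the non-coercivity issue more visible (you explicitly use $\Psi(0)<L_\infty$ together with posterior concentration to force $\Psi_N$ above $\Psi(0)$ at large $|s|$); the paper's version is perhaps slicker in that the tail is absorbed into the bounded-range estimate by pure monotonicity, so no separate ``gap below $L_\infty$'' is needed. Both are sound.

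One point you should clean up: you define $\delta:=\inf_{\varepsilon\le|s|\le M}\Psi(s)-\Psi(0)$ and then choose $M$ \emph{after} $\delta$, so the definition is circular as written. The fix is trivial — set $\delta:=\inf_{|s|\ge\varepsilon}\Psi(s)-\Psi(0)$, which is positive and does not involve $M$, because $\Psi$ is continuous, has a strict minimum at $0$, and $\liminf_{|s|\to\infty}\Psi(s)\ge L_\infty>\Psi(0)$ (by monotone/dominated convergence or Fatou, using $\ell(0)=0$, continuity at $0$, and $\ell\not\equiv0$). With that $\delta$ the rest of your choices $\eta$, $K$, $M$ go through exactly as you describe, and both \eqref{eq:ConvasWTBetaN} and \eqref{eq:UnifConvProbWTBetaN} follow from Theorems~\ref{thm:BvM} and~\ref{thm:BvMUnifConvProb} respectively, together with the (uniform weak and a.s.) consistency of $\widehat\theta_N$ to guarantee feasibility of $s=0$.
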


\begin{proof}
For any $r\in\bR$, denote by
\begin{align*}
\psi(r):=\int_{\bR}{\ell}(\lambda+r)\,\frac{e^{-\lambda^{2}/2}}{\sqrt{2\pi}}\dif\lambda,
\end{align*}
which has a strict minimum at $r=0$. Note that the integral above is finite since $\ell\in\mathbf{W}_{e,2}$. For any $\varepsilon>0$ and $N\in\bN$, we first have that
\begin{align}
&\bP\Big(\sqrt{I_{N}}\big|\wt{\beta}_{N}-\wh{\theta}_{N}\big|>\varepsilon\Big)\nonumber\\
&\quad\leq\bP\bigg(\inf_{\beta:\sqrt{I_{N}}|\beta-\wh{\theta}_{N}|>\varepsilon}\int_{\bR_{+}}\ell\big(\sqrt{I_{N}}(\eta-\beta)\big)\,p(\eta\,|\,U_{N})\dif\eta<\int_{\bR_{+}}\ell\Big(\sqrt{I_{N}}\big(\eta-\wh{\theta}_{N}\big)\Big)p(\eta\,|\,U_{N})\dif\eta\bigg)\nonumber\\
&\quad =\bP\bigg(\inf_{|r|>\varepsilon}\int_{\bR}\ell(\lambda+r)\,\wt{p}(\lambda\,|\,U_{N})\dif\lambda<\int_{\bR}\ell(\lambda)\,\wt{p}(\lambda\,|\,U_{N})\dif\lambda\bigg)\nonumber\\
\label{eq:UpperBoundDistWTBetaWHTheta1} &\quad\leq\bP\left(\inf_{|r|>\varepsilon}\int_{\bR}\ell(\lambda+r)\,\wt{p}(\lambda\,|\,U_{N})\dif\lambda<\psi(0)+\int_{\bR}\ell(\lambda)\bigg|\wt{p}(\lambda\,|\,U_{N})-\frac{e^{-\lambda^{2}/2}}{\sqrt{2\pi}}\bigg|\dif\lambda\right).
\end{align}
Pick $\delta=\delta(\varepsilon)\in(0,\inf_{|r|>\varepsilon}\psi(r)-\psi(0))$. Since $\ell$ is symmetric on $\bR$ and is non-decreasing on $\bR_{+}$, there exists $m=m(\varepsilon,\delta)>\varepsilon$ and $M=M(\varepsilon,\delta)>m$ such that
\begin{align}\label{eq:ChoiceMm}
\int_{-m}^{m}\frac{e^{-\lambda^{2}/2}}{\sqrt{2\pi}}\dif\lambda\geq\frac{\inf_{|r|>\varepsilon}\psi(r)-\delta/2}{\inf_{|r|>\varepsilon}\psi(r)-\delta/4},\quad\inf_{|x|>M-m}\ell(x)\geq\inf_{|r|>\varepsilon}\psi(r)-\frac{\delta}{4}.
\end{align}
To obtain the existence of $M$ above, assume that for any $x\in\bR$, $\ell(x)\leq\inf_{|r|>\varepsilon}\psi(r)-\delta/4$, then
\begin{align*}
\inf_{|r|>\varepsilon}\psi(r)=\inf_{|r|>\varepsilon}\int_{\bR}{\ell}(\lambda+r)\,\frac{e^{-\lambda^{2}/2}}{\sqrt{2\pi}}\dif\lambda\leq\inf_{|r|>\varepsilon}\psi(r)-\frac{\delta}{4},
\end{align*}
which is clearly a contradiction. Next, there exists $K=K(\varepsilon,\delta)>M$ large enough such that, whenever $|r|\in(\varepsilon,M)$,
\begin{align}\label{eq:ChoiceK}
\int_{-K}^{K}\ell(\lambda+r)\,\frac{e^{-\lambda^{2}/2}}{\sqrt{2\pi}}\dif\lambda\geq\int_{\bR}\ell(\lambda+r)\,\frac{e^{-\lambda^{2}/2}}{\sqrt{2\pi}}\dif\lambda-\delta\geq\inf_{|r|>\varepsilon}\psi(r)-\frac{\delta}{2}.
\end{align}
Note that for $|r|\geq M$, \eqref{eq:ChoiceMm} implies that
\begin{align}\label{eq:IntmpmBigr}
\int_{-m}^{m}\ell(\lambda+r)\,\frac{e^{-\lambda^{2}/2}}{\sqrt{2\pi}}\dif\lambda\geq\frac{\inf_{|r|>\varepsilon}\psi(r)-\delta/2}{\inf_{|r|>\varepsilon}\psi(r)-\delta/4}\cdot\bigg(\inf_{|r|>\varepsilon}\psi(r)-\frac{\delta}{4}\bigg)=\inf_{|r|>\varepsilon}\psi(r)-\frac{\delta}{2}.
\end{align}
Hence, by combining \eqref{eq:ChoiceK} and \eqref{eq:IntmpmBigr}, we obtain that
\begin{align}\label{eq:LowerBoundIntKellnormal}
\inf_{|r|>\varepsilon}\int_{-K}^{K}\ell(\lambda+r)\,\frac{e^{-\lambda^{2}/2}}{\sqrt{2\pi}}\dif\lambda\geq\inf_{|r|\geq\varepsilon}\psi(r)-\frac{\delta}{2}>\psi(0)+\frac{\delta}{2}.
\end{align}
Moreover, since $\ell$ is symmetric on $\bR$ and is non-decreasing on $\bR_{+}$, for any $|r|>\varepsilon+1+2K$,
\begin{align*}
\int_{-K}^{K}\ell(\lambda+r)\,\wt{p}(\lambda\,|\,U_{N})\dif\lambda\geq\sup_{|x|\leq\varepsilon+1+K}\!\!\ell(x)\cdot\int_{-K}^{K}\wt{p}(\lambda\,|\,U_{N})\dif\lambda\geq\int_{-K}^{K}\ell(\lambda+\varepsilon+1)\,\wt{p}(\lambda\,|\,U_{N})\dif\lambda.
\end{align*}
Hence, we have that
\begin{align*}
\inf_{|r|>\varepsilon}\!\int_{\bR}\ell(\lambda+r)\wt{p}(\lambda\,|\,U_{N})\!\dif\lambda\geq\inf_{|r|>\varepsilon}\!\int_{-K}^{K}\!\ell(\lambda+r)\wt{p}(\lambda\,|\,U_{N})\!\dif\lambda=\inf_{|r|\in(\varepsilon,\varepsilon+1+2K)}\!\int_{-K}^{K}\!\ell(\lambda+r)\wt{p}(\lambda\,|\,U_{N})\!\dif\lambda.
\end{align*}
Together with \eqref{eq:LowerBoundIntKellnormal}, we deduce that
\begin{align}
&\inf_{|r|>\varepsilon}\int_{\bR}\ell(\lambda+r)\,\wt{p}(\lambda\,|\,U_{N})\dif\lambda\nonumber\\
&\quad\geq\inf_{|r|\in(\varepsilon,\varepsilon+1+2K)}\int_{-K}^{K}\ell(\lambda+r)\,\frac{e^{-\lambda^{2}/2}}{\sqrt{2\pi}}\dif\lambda-\sup_{|r|\in(\varepsilon,\varepsilon+1+2K)}\int_{-K}^{K}\ell(\lambda+r)\bigg|\wt{p}(\lambda\,|\,U_{N})-\frac{e^{-\lambda^{2}/2}}{\sqrt{2\pi}}\bigg|\dif\lambda\nonumber\\
\label{eq:LowerBoundIntRellTildep} &\quad >\psi(0)+\frac{\delta}{2}-\sup_{|x|\leq\varepsilon+1+3K}\ell(x)\cdot\int_{\bR}\bigg|\wt{p}(\lambda\,|\,U_{N})-\frac{e^{-\lambda^{2}/2}}{\sqrt{2\pi}}\bigg|\dif\lambda.
\end{align}
Finally, by combining \eqref{eq:UpperBoundDistWTBetaWHTheta1} and \eqref{eq:LowerBoundIntRellTildep}, we obtain that for any $N\in\bN$,
\begin{align*}
\bP\Big(\sqrt{I_{N}}\big|\wt{\beta}_{N}-\wh{\theta}_{N}\big|>\varepsilon\Big)\leq\bP\left(\int_{\bR}\bigg(\ell(\lambda)+\sup_{|x|\leq\varepsilon+1+3K}\ell(x)\bigg)\bigg|\wt{p}(\lambda\,|\,U_{N})-\frac{e^{-\lambda^{2}/2}}{\sqrt{2\pi}}\bigg|\dif\lambda>\frac{\delta}{2}\right),
\end{align*}
and thus
\begin{align*}
\bP\Big(\limsup_{N\rightarrow\infty}\Big\{\sqrt{I_{N}}\big|\wt{\beta}_{N}-\wh{\theta}_{N}\big|>\varepsilon\Big\}\Big)\leq\bP\left(\limsup_{N\rightarrow\infty}\bigg\{\int_{\bR}\big(\ell(\lambda)+C\big)\bigg|\wt{p}(\lambda\,|\,U_{N})-\frac{e^{-\lambda^{2}/2}}{\sqrt{2\pi}}\bigg|\dif\lambda>\frac{\delta}{2}\bigg\}\right),
\end{align*}
where $C=C(\varepsilon,\delta,K):=\sup_{|x|\leq\varepsilon+1+3K}\ell(x)$. The results of the theorem follow immediately from Theorem~\ref{thm:BvM} and \ref{thm:BvMUnifConvProb}.
\end{proof}

\begin{remark}\label{rem:AsymetricLossFunt}
Theorem \ref{thm:BayeUANExp} remains valid when the symmetry of $\ell$ in condition (i) of Definition \ref{def:LossFunt} is relaxed to the non-increasing monotonicity on $(-\infty,0)$. In this case, one has to split the integral $\int_{\bR}\ell(\lambda+r)\,\wt{p}(\lambda\,|\,U_{N})\dif\lambda$ into the integrals on $[0,\infty)$ and $(-\infty,0)$, and to perform similar estimates for each integral. We skip the technical details here for the sake of brevity.
\end{remark}

\begin{proposition}\label{cor:BayeUANExp}
Under the conditions of Theorem~\ref{thm:BayeUANExp}, the asymptotic properties (a)$-$(c) stated in Theorem~\ref{thm:BayeUANPoly} remain true for $\wt{\beta}_{N}$. Moreover, $\wt{\beta}_{N}$ is strongly consistent. If in addition,  $\varrho(x)=\1_{\bR_{+}}(x)$,  is the uninformed/uniform prior, then  $\wt{\beta}_{N}$ also satisfies the asymptotic property $(d)$ in Theorem~\ref{thm:BayeUANPoly}.
\end{proposition}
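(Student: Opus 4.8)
The plan is to deduce everything from Theorem~\ref{thm:BayeUANExp}, which already shows that $\wt{\beta}_{N}$ is asymptotically equivalent to the maximum--likelihood--type estimator $\wh{\theta}_{N}$ --- both $\bP$-a.s.\ in the rescaled sense \eqref{eq:ConvasWTBetaN} and uniformly in probability in \eqref{eq:UnifConvProbWTBetaN} --- and then to graft this onto the properties of $\wh{\theta}_{N}$ obtained in Section~\ref{sec:MLE}. The starting point is the decomposition
\[
\sqrt{I_{N}}\big(\wt{\beta}_{N}-\theta_{0}\big)=\sqrt{I_{N}}\big(\wt{\beta}_{N}-\wh{\theta}_{N}\big)+\sqrt{I_{N}}\big(\wh{\theta}_{N}-\theta_{0}\big)=\sqrt{I_{N}}\big(\wt{\beta}_{N}-\wh{\theta}_{N}\big)+\xi_{N}.
\]
Property (c) of Theorem~\ref{thm:BayeUANPoly} is then exactly \eqref{eq:UnifConvProbWTBetaN}. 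Since $\xi_{N}\ed\cN(0,1)$ for every $N$ (by \eqref{eq:MLEIniAsymNormal}), property (b) follows from Slutsky's theorem together with the uniformity in \eqref{eq:UnifConvProbWTBetaN}. Because $I_{N}\to\infty$, \eqref{eq:UnifConvProbWTBetaN} also gives $\sup_{\theta_{0}\in B}\bP(|\wt{\beta}_{N}-\wh{\theta}_{N}|>\varepsilon)\to0$, which combined with the uniform weak consistency of $\wh{\theta}_{N}$ (Proposition~\ref{cor:MLEIniUAN}(a)) yields property (a). Finally, \eqref{eq:ConvasWTBetaN} gives $\wt{\beta}_{N}-\wh{\theta}_{N}\to0$ $\bP$-a.s.\ (since $\sqrt{I_{N}}\ge1$ eventually), and adding the strong consistency \eqref{eq:MLEIniStrConst} of $\wh{\theta}_{N}$ gives $\wt{\beta}_{N}\to\theta_{0}$ $\bP$-a.s. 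All of this is routine bookkeeping.

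The substantive part is property (d) under the uniform prior $\varrho\equiv\1_{\bR_{+}}$. By parts (a)--(c) just obtained, $\wt{\beta}_{N}$ is uniformly asymptotically normal on $B$ with parameter $(\theta,I_{N}^{-1})$, so $\sqrt{I_{N}}(\wt{\beta}_{N}-\theta)$ converges in distribution to $\cN(0,1)$ uniformly on compact parameter sets. What remains is to upgrade this to convergence of the risk $\bE(\varpi_{N}(\wt{\beta}_{N}-\theta))=\bE(\varpi(\sqrt{I_{N}}(\wt{\beta}_{N}-\theta)))$ to $\bE(\varpi(\xi))$ for $\varpi\in\mathbf{W}_{p}$ (the expectation under the law with true parameter $\theta$), which, together with the LAN lower bound already encoded in conditions (N1)--(N4), yields asymptotic efficiency. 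As for $\wh{\theta}_{N}$ in Proposition~\ref{cor:MLEIniUAN}(d), I would invoke \cite[Theorem III.1.3]{IbragimovKhasminskiiBook1981}; beyond asymptotic normality, the only thing to check is the uniform moment bound
\[
\sup_{N\in\bN}\ \sup_{|\theta-\theta_{0}|\le\delta_{0}}\bE\Big(\big|\sqrt{I_{N}}(\wt{\beta}_{N}-\theta)\big|^{p}\Big)<\infty\qquad\text{for every }p>0 .
\]
Since $\sqrt{I_{N}}(\wh{\theta}_{N}-\theta)\ed\cN(0,1)$ exactly (by \eqref{eq:MLEIniAsymNormal} applied at the true parameter $\theta$), that term contributes bounded moments, so everything reduces to an $L^{p}$-bound on $\sqrt{I_{N}}(\wt{\beta}_{N}-\wh{\theta}_{N})$, uniform in $N$ and in $\theta$ near $\theta_{0}$.

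This is where the uniform prior is essential: by Remark~\ref{rem:ConjugatePrior} (with $\varrho\equiv1$) the posterior is the truncated Gaussian $\cN(\wh{\theta}_{N},I_{N}^{-1};\bR_{+})$, and after the substitution $\lambda=\sqrt{I_{N}}(\eta-\wh{\theta}_{N})$ the estimator becomes
\[
\sqrt{I_{N}}\big(\wh{\theta}_{N}-\wt{\beta}_{N}\big)=\argmin_{\,r<\sqrt{I_{N}}\wh{\theta}_{N}}\ \int_{-\sqrt{I_{N}}\wh{\theta}_{N}}^{\infty}\ell(\lambda+r)\,e^{-\lambda^{2}/2}\,\dif\lambda .
\]
On the event $\{\wh{\theta}_{N}\ge\theta_{0}/2\}$ the truncation point $\sqrt{I_{N}}\wh{\theta}_{N}$ is very large, so this integral is close to $\int_{\bR}\ell(\lambda+r)e^{-\lambda^{2}/2}\dif\lambda$; comparing its value at the minimizer with its value at $r=0$ (which is at most $\int_{\bR}\ell(\lambda)e^{-\lambda^{2}/2}\dif\lambda<\infty$, finite because $\ell\in\mathbf{W}_{e,2}$), and using the symmetry and monotonicity of $\ell$ together with the strict-minimum hypothesis of Theorem~\ref{thm:BayeUANExp}, forces $|\sqrt{I_{N}}(\wt{\beta}_{N}-\wh{\theta}_{N})|\le L$ for a deterministic $L$ and all large $N$. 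On the complementary event $\{\wh{\theta}_{N}<\theta_{0}/2\}$ --- which is contained in $\{|\sqrt{I_{N}}(\wh{\theta}_{N}-\theta)|>\sqrt{I_{N}}\theta_{0}/2\}$ and hence, by the exact normality of $\sqrt{I_{N}}(\wh{\theta}_{N}-\theta)$, has probability decaying faster than any power of $1/\sqrt{I_{N}}$ --- one bounds $|\sqrt{I_{N}}(\wt{\beta}_{N}-\wh{\theta}_{N})|$ crudely by $\sqrt{I_{N}}\wt{\beta}_{N}+\sqrt{I_{N}}|\wh{\theta}_{N}|$, with $\sqrt{I_{N}}\wt{\beta}_{N}$ controlled via the concentration of the truncated Gaussian near $0$ when its location is small or negative, and a Gaussian-tail moment estimate then shows this event's contribution to the $p$-th moment is bounded, in fact $o(1)$. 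Assembling the two pieces gives the moment bound, and \cite[Theorem III.1.3]{IbragimovKhasminskiiBook1981} finishes (d).

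The routine part is (a)--(c) and strong consistency, which are immediate transfers from Theorem~\ref{thm:BayeUANExp}. The main obstacle is the moment bound underlying (d): one has to produce the deterministic bound $|\sqrt{I_{N}}(\wt{\beta}_{N}-\wh{\theta}_{N})|\le L$ on the bulk event purely from the shape of $\ell$ and the truncated-Gaussian posterior --- treating the cases of bounded and unbounded $\ell$ separately, since coercivity of $r\mapsto\int_{\bR}\ell(\lambda+r)e^{-\lambda^{2}/2}\dif\lambda$ is automatic only in the latter --- and one must show that on the rare event where the $\bR_{+}$-constraint drags $\wt{\beta}_{N}$ away from $\wh{\theta}_{N}$ the damage is negligible at the level of $p$-th moments. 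Both steps rely on having the posterior in closed form, which is available only for the uniform prior; this is precisely why part (d) is stated in that case alone.
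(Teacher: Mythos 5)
Your treatment of parts (a)--(c) and strong consistency is exactly the paper's: properties (a)--(c) transfer from $\wh{\theta}_{N}$ via \eqref{eq:UnifConvProbWTBetaN} and \eqref{eq:ConvasWTBetaN}, and strong consistency from \eqref{eq:MLEIniStrConst} plus \eqref{eq:ConvasWTBetaN}. For part (d) you have the right high-level idea --- reduce asymptotic efficiency to a uniform $p$-th moment bound on $\sqrt{I_{N}}(\wt{\beta}_{N}-\theta)$, then invoke \cite[Theorem III.1.3]{IbragimovKhasminskiiBook1981} --- and you correctly identify that the closed-form truncated-Gaussian posterior is what makes the uniform prior tractable, and that bounded $\ell$ needs to be set aside (the paper simply excludes it, since bounded $\ell\in\mathbf W_p$ is already covered by Theorem~\ref{thm:BayeUANPoly}). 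Where you diverge is the event split and the treatment of the bad event. The paper splits on $\{\sqrt{I_{N}}\wh{\theta}_{N}\ge -K_1\}$ versus $\{\sqrt{I_{N}}\wh{\theta}_{N}\le -K_1\}$ for a fixed constant $K_1$ and obtains, on \emph{each} piece, a purely deterministic pointwise bound: on the first piece $\sqrt{I_{N}}|\wt{\beta}_{N}-\wh{\theta}_{N}|\le K$ via a comparison of posterior risks at $r$ and at $r=0$ exploiting symmetry and monotonicity of $\ell$; on the second piece $\wt{\beta}_{N}\le 2x_1/\sqrt{I_{N}}$, whence $\sqrt{I_{N}}|\wt{\beta}_{N}-\theta_0|\le\sqrt{I_{N}}\theta_0\le\sqrt{I_{N}}|\wh{\theta}_{N}-\theta_0|$. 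The net result is the clean inequality \eqref{eq:UpperBoundWTBetaNMLEIni}, valid for all $\omega$ and all large $N$, from which uniform integrability is immediate by \eqref{eq:MLEIniAsymNormal}. Your split on $\{\wh{\theta}_{N}\ge\theta_0/2\}$ puts more mass in the bad event, and you then handle that event with a probabilistic decay estimate plus a rather compressed concentration argument for $\sqrt{I_{N}}\wt{\beta}_{N}$; that can be made to work (for $\delta<\theta_0/2$ the containment argument you describe is fine), but it is noticeably vaguer than the paper's explicit pointwise inequality $\wt{\beta}_{N}\le 2x_1/\sqrt{I_{N}}$ on the bad event, and your "the integral is close to the untruncated one" step on the bulk event needs the paper's risk-comparison-at-$r=0$ device spelled out to actually pin down a deterministic $L$, since a priori the minimizer is not known to be bounded.
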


\begin{proof}
Properties (a)$-$(c) in Theorem~\ref{thm:BayeUANPoly} follow immediately from the properties (a)$-$(c) in Proposition~\ref{cor:MLEIniUAN} for $\wh{\theta}_{N}$ and Theorem~\ref{thm:BayeUANExp}. The strong consistency is a direct consequence of \eqref{eq:MLEIniStrConst} and \eqref{eq:ConvasWTBetaN}.

Next, with $\ell\in\mathbf{W}_{e,2}$  and satisfying the conditions of Theorem \ref{thm:BayeUANExp}, we take $\varrho(x)=\1_{\bR_{+}}(x)$.
Due to \eqref{eq:PosteriorLambda} and \eqref{eq:WTBetaNPosteriorLambda}, we obtain
\begin{align}\label{eq:WTBetaNUnifPrior}
\wt{\beta}_{N}=\argmin_{\beta\in\bR_{+}}\int_{-\sqrt{I_{N}}\wh{\theta}_{N}}^{\infty}\ell\Big(\lambda+\sqrt{I_{N}}\big(\wh{\theta}_{N}-\beta\big)\Big)e^{-\lambda^{2}/2}\dif\lambda,
\end{align}
We will investigate the asymptotic efficiency of $\wt{\beta}_{N}$ as stated in Theorem \ref{thm:BayeUANPoly}.(d).  We will exclude any bounded loss function $\ell$ which clearly belong to $\mathbf{W}_{p}$, and thus their asymptotic efficiency is covered by Theorem~\ref{thm:BayeUANPoly}.

In view of the asymptotic normality of $\wt{\beta}_{N}$ with parameter $(\theta_{0},I_{N}^{-1})$ (property (b) above), uniformly in $\theta_{0}\in B$, and by \cite[Theorem III.1.3]{IbragimovKhasminskiiBook1981}, it is sufficient to show that for any $m\in\bN$, there exists $N_{0}=N_{0}(m)\in\bN$, such that the family of random variables $\{|\sqrt{I_{N}}(\wt{\beta}_{N}-\theta_{0})|^{m},N\geq N_{0},\theta_{0}\in B\}$ are uniformly integrable under $\bP$, for any given compact set $B\subset\bR_{+}$. In what follows, we will show that for any $m\in\bN$ and $\theta_{0}\in B$, there exists $N_{0}=N_{0}(m)\in\bN$ and $K=K(m)>0$, such that for any $N\geq N_{0}$ and $\omega\in\Omega$,
\begin{align}\label{eq:UpperBoundWTBetaNMLEIni}
\Big|\sqrt{I_{N}}\big(\wt{\beta}_{N}(\omega)-\theta_{0}\big)\Big|^{m}\leq\Big(\Big|\sqrt{I_{N}}\big(\wh{\theta}_{N}(\omega)-\theta_{0}\big)\Big|^{m}+K\Big),
\end{align}
which clearly implies the uniform integrability in light of \eqref{eq:MLEIniAsymNormal}.

Since $\ell$ is symmetric on $\bR$ and non-decreasing on $\bR_{+}$, there exists $x_{1}>0$ such that $\ell(x_{1})>0$, $\ell(x)\geq\ell(x-2x_{1})$ for all $x\geq x_{1}$, and $\ell(x)<\ell(x-2x_{1})$ for all $x\in[0,x_{1})$. For such $x_{1}$, we can find $x_{2}\in(0,x_{1})$ so that $\ell(x_{2})<\ell(x_{1})$. Then, for any $N\in\bN$ and $\beta\geq 2x_{1}/\sqrt{I_{N}}$,
\begin{align*}
&\int_{-\sqrt{I_{N}}\wh{\theta}_{N}}^{\infty}\bigg(\ell\Big(\lambda+\sqrt{I_{N}}\big(\wh{\theta}_{N}-\beta\big)\Big)-\ell\big(\lambda+\sqrt{I_{N}}\wh{\theta}_{N}\big)\bigg)\,e^{-\lambda^{2}/2}\dif\lambda\\
&\quad =\int_{\bR_{+}}\Big(\ell\big(y-\sqrt{I_{N}}\beta\big)-\ell(y)\Big)e^{-(y-\sqrt{I_{N}}\wh{\theta}_{N})^{2}/2}\dif y\\
&\quad =\int_{0}^{x_{1}}\!\Big(\ell\big(y-\sqrt{I_{N}}\beta\big)-\ell(y)\Big)e^{-(y-\sqrt{I_{N}}\wh{\theta}_{N})^{2}/2}\dif y+\!\int_{x_{1}}^{\infty}\!\Big(\ell\big(y-\sqrt{I_{N}}\beta\big)-\ell(y)\Big)e^{-(y-\sqrt{I_{N}}\wh{\theta}_{N})^{2}/2}\dif y\\
&\quad\geq\int_{0}^{x_{2}}\Big(\ell\big(y-\sqrt{I_{N}}\beta\big)-\ell(y)\Big)\,e^{-(y-\sqrt{I_{N}}\wh{\theta}_{N})^{2}/2}\dif y-\int_{x_{1}}^{\infty}\ell(y)\,e^{-(y-\sqrt{I_{N}}\wh{\theta}_{N})^{2}/2}\dif y\\
&\quad\geq\big(\ell\big(x_{2}-2x_{1}\big)-\ell(x_{2})\big)\int_{0}^{x_{2}}\,e^{-(y-\sqrt{I_{N}}\wh{\theta}_{N})^{2}/2}\dif y-\int_{x_{1}}^{\infty}\ell(y)\,e^{-(y-\sqrt{I_{N}}\wh{\theta}_{N})^{2}/2}\dif y\\
&\quad\geq\big(\ell(x_{1})-\ell(x_{2})\big)\int_{0}^{x_{2}}\,e^{-(y-\sqrt{I_{N}}\wh{\theta}_{N})^{2}/2}\dif y-\int_{\bR_{+}}\ell(z+x_{1})\,e^{-(z+x_{1}-\sqrt{I_{N}}\wh{\theta}_{N})^{2}/2}\dif z.
\end{align*}
Hence, for any $N\in\bN$, $\beta\geq 2x_{1}/\sqrt{I_{N}}$, and $\omega\in\Omega$ so that $\wh{\theta}_{N}(\omega)<0$,
\begin{align*}
&\int_{-\sqrt{I_{N}}\wh{\theta}_{N}(\omega)}^{\infty}\bigg(\ell\Big(\lambda+\sqrt{I_{N}}\big(\wh{\theta}_{N}(\omega)-\beta\big)\Big)-\ell\big(\lambda+\sqrt{I_{N}}\wh{\theta}_{N}(\omega)\big)\bigg)\,e^{-\lambda^{2}/2}\dif\lambda\\
&\quad\geq\big(\ell(x_{1})-\ell(x_{2})\big)x_{2}\,e^{-(x_{2}-\sqrt{I_{N}}\wh{\theta}_{N}(\omega))^{2}/2}-e^{-(x_{1}-\sqrt{I_{N}}\wh{\theta}_{N}(\omega))^{2}/2}\int_{\bR_{+}}\ell(z+x_{1})\,e^{-z^{2}/2}\dif z.
\end{align*}
Since $x_{1}>x_{2}>0$, this implies that there exists $K_{1}=K_{1}(x_{1},x_{2})>0$ such that, for any $\omega\in\Omega$ with $\sqrt{I_{N}}\wh{\theta}_{N}(\omega)\leq -K_{1}$, the right-hand side of the above inequality is strictly positive. Hence, for any $N\in\bN$ and $\beta\geq 2x_{1}/\sqrt{I_{N}}$, we obtain that
\begin{align*}
&\1_{\{\sqrt{I_{N}}\wh{\theta}_{N}\leq -K_{1}\}}\int_{-\sqrt{I_{N}}\wh{\theta}_{N}}^{\infty}\ell\Big(\lambda+\sqrt{I_{N}}\big(\wh{\theta}_{N}-\beta\big)\Big)\,e^{-\lambda^{2}/2}\dif\lambda\\
&\quad>\1_{\{\sqrt{I_{N}}\wh{\theta}_{N}\leq -K_{1}\}}\int_{-\sqrt{I_{N}}\wh{\theta}_{N}}^{\infty}\ell\big(\lambda+\sqrt{I_{N}}\wh{\theta}_{N}\big)\,e^{-\lambda^{2}/2}\dif\lambda,
\end{align*}
which, together with \eqref{eq:WTBetaNUnifPrior}, implies that
\begin{align*}
\1_{\{\sqrt{I_{N}}\wh{\theta}_{N}\leq -K_{1}\}}\,\wt{\beta}_{N}\leq\1_{\{\sqrt{I_{N}}\wh{\theta}_{N}\leq -K_{1}\}}\frac{2x_{1}}{\sqrt{I_{N}}},\quad\text{for any }N\in\bN.
\end{align*}
Since $\wt{\beta}_{N}\geq 0$, we conclude that there exists $N_{1}=N_{1}(x_{1})\in\bN$ so that $2x_{1}/\sqrt{I_{N_{1}}}\leq\inf_{\theta\in B}\theta$, and that, for any $\theta_{0}\in B$ and $N\geq N_{1}$,
\begin{align}\label{eq:UpperBoundWTBetaNMLEIni1}
\1_{\{\sqrt{I_{N}}\wh{\theta}_{N}\leq -K_{1}\}}\sqrt{I_{N}}\big|\wt{\beta}_{N}-\theta_{0}\big|\leq\1_{\{\sqrt{I_{N}}\wh{\theta}_{N}\leq -K_{1}\}}\sqrt{I_{N}}\,\theta_{0}\leq\1_{\{\sqrt{I_{N}}\wh{\theta}_{N}\leq -K_{1}\}}\sqrt{I_{N}}\big|\wh{\theta}_{N}-\theta_{0}\big|.
\end{align}

It remains to prove \eqref{eq:UpperBoundWTBetaNMLEIni}, when $\sqrt{I_{N}}\wh{\theta}_{N}> -K_{1}$. Since $\ell\in\mathbf{W}_{e,2}$ is unbounded, there exists $\wt{K}_{2}>K_{1}$ large enough such that
\begin{align*}
\int_{K_{1}}^{\wt{K}_{2}}\Big(\ell\big(\wt{K}_{2}\big)-\ell(\lambda)\Big)\,e^{-\lambda^{2}/2}\dif\lambda>\int_{[-\wt{K}_{2},\wt{K}_{2}]^{c}}\ell(\lambda)\,e^{-\lambda^{2}/2}\dif\lambda.
\end{align*}
Recalling that $\ell$ is symmetric, for any $r<-2\wt{K}_{2}$ and $N\in\bN$, we deduce that
\begin{align*}
\1_{\{\sqrt{I_{N}}\wh{\theta}_{N}\geq -K_{1}\}}\int_{-\sqrt{I_{N}}\wh{\theta}_{N}}^{\infty}\!\ell(\lambda+r)\,e^{-\lambda^{2}/2}\dif\lambda &\geq\1_{\{\sqrt{I_{N}}\wh{\theta}_{N}\geq -K_{1}\}}\int_{(-\sqrt{I_{N}}\wh{\theta}_{N})\vee(-\wt{K}_{2})}^{\wt{K}_{2}}\ell(\lambda+r)\,e^{-\lambda^{2}/2}\dif\lambda\\
&\geq\1_{\{\sqrt{I_{N}}\wh{\theta}_{N}\geq -K_{1}\}}\ell\big(\wt{K}_{2}\big)\int_{(-\sqrt{I_{N}}\wh{\theta}_{N})\vee(-\wt{K}_{2})}^{\wt{K}_{2}}e^{-\lambda^{2}/2}\dif\lambda\\
&>\1_{\{\sqrt{I_{N}}\wh{\theta}_{N}\geq -K_{1}\}}\int_{(-\sqrt{I_{N}}\wh{\theta}_{N})\vee(-\wt{K}_{2})}^{\wt{K}_{2}}\ell(\lambda)\,e^{-\lambda^{2}/2}\dif\lambda\\
&\quad +\1_{\{\sqrt{I_{N}}\wh{\theta}_{N}\geq -K_{1}\}}\int_{[-\wt{K}_{2},\wt{K}_{2}]^{c}}\ell(\lambda)\,e^{-\lambda^{2}/2}\dif\lambda\\
&> \1_{\{\sqrt{I_{N}}\wh{\theta}_{N}\geq -K_{1}\}}\int_{-\sqrt{I_{N}}\wh{\theta}_{N}}^{\infty}\ell(\lambda)\,e^{-\lambda^{2}/2}\dif\lambda,
\end{align*}
which, combined  with \eqref{eq:WTBetaNUnifPrior}, implies that
\begin{align}\label{eq:UpperBoundWTBetaNWHThetaNSmall}
\1_{\{\sqrt{I_{N}}\wh{\theta}_{N}\geq -K_{1}\}}\sqrt{I_{N}}\big(\wh{\theta}_{N}-\wt{\beta}_{N}\big)\geq\1_{\{\sqrt{I_{N}}\wh{\theta}_{N}\geq -K_{1}\}}\big(-2\wt{K}_{2}\big),\quad\text{for any }N\in\bN.
\end{align}
Using similar arguments as above, we can show that there exists $\overline{K}_{2}>K_{1}$ large enough, such that
\begin{align}\label{eq:LowerBoundWTBetaNWHThetaNSmall}
\1_{\{\sqrt{I_{N}}\wh{\theta}_{N}\geq -K_{1}\}}\sqrt{I_{N}}\big(\wh{\theta}_{N}-\wt{\beta}_{N}\big)\leq 2\overline{K}_{2}\1_{\{\sqrt{I_{N}}\wh{\theta}_{N}\geq -K_{1}\}},\quad\text{for any }N\in\bN.
\end{align}
Finally, by combining \eqref{eq:UpperBoundWTBetaNWHThetaNSmall} and \eqref{eq:LowerBoundWTBetaNWHThetaNSmall} and letting $K=\max(2\wt{K}_{2},2\overline{K}_{2})$, we obtain that, for any $N\in\bN$ and $\theta_{0}\in B$,
\begin{align}\label{eq:UpperBoundWTBetaNMLEIni2}
\1_{\{\sqrt{I_{N}}\wh{\theta}_{N}\geq -K_{1}\}}\sqrt{I_{N}}\big|\wt{\beta}_{N}-\theta_{0}\big|\leq\1_{\{\sqrt{I_{N}}\wh{\theta}_{N}\geq -K_{1}\}}\Big(\sqrt{I_{N}}\big|\wh{\theta}_{N}-\theta_{0}\big|+K\Big).
\end{align}
Combining \eqref{eq:UpperBoundWTBetaNMLEIni1} and \eqref{eq:UpperBoundWTBetaNMLEIni2} clearly leads to \eqref{eq:UpperBoundWTBetaNMLEIni}. The proof is now complete.
\end{proof}

\begin{remark}\label{rem:BayeUANExp}
The symmetry of the loss function $\ell$ is again not essential for the validity of Proposition~\ref{cor:BayeUANExp}, and can be relaxed to the non-increasing monotonicity on $(-\infty,0)$ with a more technical proof. Moreover, we also conjecture that the asymptotic efficiency for $\wt{\beta}_{N}$ remains valid with any prior $\varrho\in\cQ_{e,2}$, and the detail proof will be given elsewhere.

Finally, while we have extended the choice of loss function $\ell$ to the class $\mathbf{W}_{e,2}$ with exponential growth in the definition of $\wt{\beta}_{N}$ for Theorem~\ref{thm:BayeUANExp} and Proposition~\ref{cor:BayeUANExp}, we still keep $\varpi\in\mathbf{W}_{p}$ in the definition of asymptotic efficiency for Corollary \ref{cor:BayeUANExp}. The validity of the asymptotic efficiency for $\wt{\beta}_{N}$ with more general $\varpi\in\mathbf{W}_{e,2}$ is yet to be studied.
\end{remark}

\subsection{Asymptotic properties of $\wh{\beta}_{N}$}\label{sec:AsymHatBetaN}

In this section we will study the asymptotic properties of the Bayesian estimator $\wh\beta_N$. While we will investigate only (strong) consistency, and asymptotic normality of  $\wh\beta_N$, we recall that $\wh\beta_N$ is defined for larger class of loss functions $\mathbf{W}'$, in comparison to $\widetilde{\beta}_N$ from previous section.

When $\wh{\beta}_{N}$ is well defined for all $N\in\bN$, the following theorem provides a sufficient condition for the consistency and asymptotic normality of $\wh{\beta}_{N}$.

\begin{theorem}\label{thm:BayeEst}
Assume that the prior density $\varrho\in\cQ$ is positive and continuous in a neighborhood of $\theta_{0}$, and that $\wh{\beta}_{N}$ is well-defined with respect to a loss function $\ell\in\mathbf{W}'$, for every $N\in\bN$. Moreover, assume that there exists $\{a_{N}\}_{N\in\bN}\subset\bR_{+}$, a test function $f$ as in Theorem \ref{thm:BvM} satisfying conditions (C1) and (C2), and another loss function $\wt{\ell}$, such that
\begin{itemize}
\item [(i)] $\,\displaystyle{a_{N}\,\ell\bigg(\frac{\lambda}{\sqrt{I_{N}}}\bigg)\leq f(\lambda)},\,\,$ for any $\lambda\in\bR$ and $N\in\bN$;
\item [(ii)] $\,\displaystyle{\lim_{N\rightarrow\infty}\sup_{\lambda\in B}\left|a_{N}\,\ell\bigg(\frac{\lambda}{\sqrt{I_{N}}}\bigg)-\wt{\ell}(\lambda)\right|=0},\,\,$ for any compact set $B\subset\bR$;
\item [(iii)] $\,\displaystyle{r\mapsto\int_{\bR}\wt{\ell}(\lambda+r)\,e^{-\lambda^{2}/2}\dif\lambda}$ has a strict minimum at $r=0$.
\end{itemize}
Then,
\begin{itemize}
\item [(a)] $\,\displaystyle{\lim_{N\rightarrow\infty}\sqrt{I_{N}}\big(\wh{\beta}_{N}-\wh{\theta}_{N}\big)=0}$, $\,\bP-\text{a.s.}$;
\item [(b)] $\,\displaystyle{\lim_{N\rightarrow\infty}a_{N}\int_{\bR_{+}}\ell\big(\theta-\wh{\beta}_{N}\big)\,p(\theta\,|\,U_{N})\dif\theta=\int_{\bR}\wt{\ell}(\lambda)\,\frac{e^{-\lambda^{2}/2}}{\sqrt{2\pi}}\dif\lambda}$, $\,\bP-\text{a.s.}$.
\end{itemize}
In particular, $\wh{\beta}_N$ is strongly consistent and asymptotically normal, as $N\rightarrow\infty$, namely
\begin{align*}
\wh{\beta}_{N}\rightarrow\theta_{0},\,\,\,\bP-\text{a.s.}\quad\text{and}\quad\sqrt{I_{N}}\big(\wh{\beta}_{N}-\theta_{0}\big)\cd\cN(0,1).
\end{align*}
\end{theorem}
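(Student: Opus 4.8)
The plan is to transfer the problem, through the substitution $\lambda=\sqrt{I_N}(\eta-\wh\theta_N)$, from the posterior $p(\,\cdot\,|\,U_N)$ to the normalized posterior $\wt p(\,\cdot\,|\,U_N)$ of \eqref{eq:PosteriorLambda}, and then to rerun the argument of Theorem~\ref{thm:BayeUANExp} with the single loss $\ell$ replaced by the sequence $g_N(\lambda):=a_N\,\ell(\lambda/\sqrt{I_N})$. Setting $r_N:=\sqrt{I_N}(\wh\theta_N-\wh\beta_N)$, the definition \eqref{eq:DefWHBetaNW} of $\wh\beta_N$ becomes, after multiplication by $a_N$,
\[
a_N\int_{\bR_{+}}\ell(\eta-\beta)\,p(\eta\,|\,U_N)\,\dif\eta=\int_{\bR}g_N\big(\lambda+\sqrt{I_N}(\wh\theta_N-\beta)\big)\,\wt p(\lambda\,|\,U_N)\,\dif\lambda ,
\]
so $\wh\beta_N$ minimizes the right-hand side over $\beta\in\bR_{+}$, i.e.\ $r_N$ minimizes $r\mapsto\int_{\bR}g_N(\lambda+r)\,\wt p(\lambda\,|\,U_N)\,\dif\lambda$ over $r<\sqrt{I_N}\wh\theta_N$, with $r=0$ admissible once $\wh\theta_N>0$. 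Each $g_N$ inherits from $\ell\in\mathbf{W}'$ being symmetric and nondecreasing in $|\lambda|$, hence by (ii) so does $\wt\ell$; being moreover locally bounded with $\wt\ell\le f$, it lies in $\mathbf{W}'$, the function $\psi(r):=\int_{\bR}\wt\ell(\lambda+r)e^{-\lambda^2/2}/\sqrt{2\pi}\,\dif\lambda$ is finite for every $r$, and by (iii) it has a strict minimum at $r=0$. From Theorem~\ref{thm:BvM} (and its proof) I will use only that, $\bP$-a.s., $C_N\to\sqrt{2\pi}\,\varrho(\theta_0)$ --- whence, by \eqref{eq:PosteriorLambda}, the continuity of $\varrho$ at $\theta_0$ and \eqref{eq:MLEIniStrConst}, $\wt p(\lambda\,|\,U_N)\to e^{-\lambda^2/2}/\sqrt{2\pi}$ uniformly on compacta --- and that \eqref{eq:BvM} holds for the given $f$.

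For part (a), I would fix $\varepsilon>0$ and put $2\delta:=\inf_{|r|\ge\varepsilon}\psi(r)-\psi(0)$, which is strictly positive because $\psi$ is lower semicontinuous and stays bounded away from $\psi(0)$ as $|r|\to\infty$ (for the latter, $\wt\ell$ being monotone, nontrivial and vanishing at $0$ forces $\psi(r)\to\sup\wt\ell>\psi(0)$). On the a.s.\ event where the two facts above hold together with \eqref{eq:MLEIniStrConst}, once $\wh\theta_N>0$ the minimality of $r_N$ yields, on $\{|r_N|>\varepsilon\}$, $\int_{\bR}g_N(\lambda+r_N)\wt p(\lambda\,|\,U_N)\dif\lambda\le\int_{\bR}g_N(\lambda)\wt p(\lambda\,|\,U_N)\dif\lambda$; the right-hand side tends to $\psi(0)$, since $|\int g_N\wt p-\int g_N e^{-\lambda^2/2}/\sqrt{2\pi}|\le\int f\,|\wt p-e^{-\lambda^2/2}/\sqrt{2\pi}|\to0$ by \eqref{eq:BvM}, while $\int g_N e^{-\lambda^2/2}/\sqrt{2\pi}\to\psi(0)$ by dominated convergence ($g_N\to\wt\ell$ pointwise, dominated by the Gaussian-integrable $f$). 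The crux is the matching lower bound $\liminf_N\inf_{|r|>\varepsilon}\int_{\bR}g_N(\lambda+r)\wt p(\lambda\,|\,U_N)\dif\lambda\ge\psi(0)+\delta$, which I would obtain exactly as in the proof of Theorem~\ref{thm:BayeUANExp}: (1) using the symmetry and monotonicity of $g_N$ one replaces $\{|r|>\varepsilon\}$ by a bounded range $\{\varepsilon<|r|\le R\}$ at the cost of restricting the $\lambda$-integral to a compact $[-K,K]$ (enlarging $|r|$ only increases that part of the integral); (2) on $[-K,K]\times\{\varepsilon\le|r|\le R\}$ the convergence $\int_{-K}^K g_N(\lambda+r)\wt p(\lambda\,|\,U_N)\dif\lambda\to\int_{-K}^K\wt\ell(\lambda+r)e^{-\lambda^2/2}/\sqrt{2\pi}\,\dif\lambda$ is uniform in $r$, by the uniform convergence $g_N\to\wt\ell$ on $[-K-R,K+R]$ from (ii), the local boundedness of $\wt\ell$, $\int_{-K}^K\wt p\le1$, and the uniform convergence of $\wt p$ to the Gaussian on $[-K,K]$; (3) $\int_{-K}^K\wt\ell(\lambda+r)e^{-\lambda^2/2}/\sqrt{2\pi}\,\dif\lambda\uparrow\psi(r)$ as $K\to\infty$ uniformly over $|r|\le R$, because $\sup_{|r|\le R}\int_{|\lambda|>K}\wt\ell(\lambda+r)e^{-\lambda^2/2}\dif\lambda\le\int_{|\mu|>K-R}f(\mu)e^{-\alpha\mu^2/2}\dif\mu\to0$ (using $\wt\ell\le f$, a change of variables that puts $f$ at its plain argument so the bounded shift falls on the Gaussian, and (C1)). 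Combining (1)--(3) with $\inf_{|r|\ge\varepsilon}\psi=\psi(0)+2\delta$ gives the lower bound; hence for all large $N$ the left-hand side strictly exceeds the right-hand side whenever $|r_N|>\varepsilon$, which is impossible, so $|r_N|\le\varepsilon$ eventually. Since $\varepsilon>0$ is arbitrary, $\sqrt{I_N}(\wh\beta_N-\wh\theta_N)\to0$ $\bP$-a.s., which is (a).

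Part (b) is then short. By (a), $r_N\to0$ $\bP$-a.s., so $g_N(\lambda+r_N)\wt p(\lambda\,|\,U_N)\to\wt\ell(\lambda)e^{-\lambda^2/2}/\sqrt{2\pi}$ for Lebesgue-a.e.\ $\lambda$ (at every continuity point of $\wt\ell$, from $g_N\to\wt\ell$ uniformly on compacta and $\wt p\to e^{-\lambda^2/2}/\sqrt{2\pi}$ pointwise), and Fatou's lemma gives $\liminf_N\int_{\bR}g_N(\lambda+r_N)\wt p(\lambda\,|\,U_N)\dif\lambda\ge\int_{\bR}\wt\ell(\lambda)e^{-\lambda^2/2}/\sqrt{2\pi}\,\dif\lambda$; for the reverse inequality, $\beta=\wh\theta_N$ being admissible once $\wh\theta_N>0$ gives $\int_{\bR}g_N(\lambda+r_N)\wt p(\lambda\,|\,U_N)\dif\lambda\le\int_{\bR}g_N(\lambda)\wt p(\lambda\,|\,U_N)\dif\lambda\to\int_{\bR}\wt\ell(\lambda)e^{-\lambda^2/2}/\sqrt{2\pi}\,\dif\lambda$ by the computation from part (a). This is (b). Finally, (a) with \eqref{eq:MLEIniStrConst} gives $\wh\beta_N\to\theta_0$ $\bP$-a.s., and $\sqrt{I_N}(\wh\beta_N-\theta_0)=\sqrt{I_N}(\wh\beta_N-\wh\theta_N)+\xi_N$ with the first summand $\to0$ $\bP$-a.s.\ by (a) and $\xi_N\ed\cN(0,1)$ by \eqref{eq:MLEIniAsymNormal}, so Slutsky's theorem yields $\sqrt{I_N}(\wh\beta_N-\theta_0)\cd\cN(0,1)$.

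The step I expect to be the main obstacle is the lower bound in part (a): one must disentangle, uniformly over the non-compact set $\{|r|>\varepsilon\}$, the two sources of error --- that $g_N$ has not yet reached $\wt\ell$ away from the origin, and that $\wt p(\,\cdot\,|\,U_N)$ has not yet reached the standard Gaussian --- and it is here that the monotonicity and symmetry of the loss, the uniform-on-compacta convergence (ii), the growth bound (C1), and the Bernstein--Von Mises convergence are all used in concert.
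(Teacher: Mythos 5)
Your proposal takes a genuinely different route from the paper's proof of Theorem~\ref{thm:BayeEst}. The paper's Step~2 first establishes a priori that $Y_N:=\sqrt{I_N}(\wh\beta_N-\wh\theta_N)$ is almost surely a \emph{bounded} sequence, by exhibiting a fixed compact window $[-K,K]$ on which the posterior risk already exceeds the Step~1 limsup if $|Y_N|\geq K$; Step~3 then passes to convergent subsequences and uses condition~(iii) to identify the limit as $0$. You instead try to prove directly the non-compact lower bound $\liminf_N\inf_{|r|>\varepsilon}\int_{\bR}g_N(\lambda+r)\,\wt p(\lambda\,|\,U_N)\,\dif\lambda>\psi(0)$, mirroring the proof of Theorem~\ref{thm:BayeUANExp}. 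That route is viable and in fact shorter (it avoids the subsequence step altogether and makes part~(b) an easy corollary of part~(a) plus Fatou), but your sketch of the crucial lower bound contains a concrete gap.

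Specifically, in your step~(1) you cut $\{|r|>\varepsilon\}$ down to $\{\varepsilon<|r|\le R\}$ with $R=\varepsilon+1+2K$, and in step~(3) you claim $\int_{-K}^K\wt\ell(\lambda+r)e^{-\lambda^2/2}\,\dif\lambda\uparrow\psi(r)$ uniformly over $|r|\le R$ via the tail bound $\sup_{|r|\le R}\int_{|\lambda|>K}\wt\ell(\lambda+r)e^{-\lambda^2/2}\,\dif\lambda\le\int_{|\mu|>K-R}f(\mu)e^{-\alpha\mu^2/2}\,\dif\mu$. With $R=\varepsilon+1+2K$ one has $K-R=-K-\varepsilon-1<0$, so that bound degenerates to $\int_{\bR}f(\mu)e^{-\alpha\mu^2/2}\,\dif\mu$, a constant; it does not go to $0$. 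Moreover the change of variables also needs $e^{-(\mu-r)^2/2}\le e^{-\alpha\mu^2/2}$, which requires $|\mu|\gtrsim R/(1-\sqrt{\alpha})$, again impossible uniformly once $R$ grows with $K$. This is not how the paper's Theorem~\ref{thm:BayeUANExp} argument actually works: there one first fixes $\delta$, then a small $m$, then $M$ (not depending on $K$) so large that $\inf_{|x|>M-m}\ell(x)\ge\inf_{|r|>\varepsilon}\psi(r)-\delta/4$, handles $|r|\ge M$ entirely on the small window $[-m,m]$ (where the value of $K$ is irrelevant), and only afterward chooses $K$ large enough that the tail over $|\lambda|>K$ is small \emph{uniformly over the $K$-independent range $|r|<M$} (cf.\ \eqref{eq:ChoiceMm}--\eqref{eq:LowerBoundIntKellnormal}). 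To make your route rigorous you must reproduce this $m<M<K$ decomposition applied to $\wt\ell$, then transfer from the Gaussian to $\wt p$ and from $\wt\ell$ to $g_N$ using that $\sup_{|x|\le \varepsilon+1+3K}g_N(x)\to\sup_{|x|\le\varepsilon+1+3K}\wt\ell(x)<\infty$ by condition~(ii) and $\int f\,|\wt p-e^{-\lambda^2/2}/\sqrt{2\pi}|\,\dif\lambda\to0$ by Theorem~\ref{thm:BvM}. With that repair, your steps~(2) (uniform-on-compacta convergence of both $g_N$ and $\wt p$), the Fatou argument for~(b), and the Slutsky step at the end are all correct.
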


\begin{proof}
The strong consistency and asymptotic normality of $\wh{\beta}_{N}$ are immediate consequences of part (a) together with the strong consistency and asymptotic normality of $\wh{\theta}_{N}$ (recalling \eqref{eq:MLEIniStrConst} and \eqref{eq:MLEIniAsymNormal}).

The proof of  (a) and (b) is split in four steps.

\smallskip
\noindent
\textit{Step 1.} We will first show that
\begin{align}\label{eq:LimsupaN}
\limsup_{N\rightarrow\infty}a_{N}\int_{\bR_{+}}\ell\big(\theta-\wh{\beta}_{N}\big)\,p(\theta\,|\,U_{N})\dif\theta\leq\int_{\bR}\wt{\ell}(\lambda)\,\frac{e^{-\lambda^{2}/2}}{\sqrt{2\pi}}\dif\lambda,\quad\bP-\text{a.s.}.
\end{align}
By the definition of $\wh{\beta}_{N}$,
\begin{align*}
\int_{\bR_{+}}\ell\big(\theta-\wh{\beta}_{N}\big)\,p(\theta\,|\,U_{N})\dif\theta\leq\int_{\bR_{+}}\ell\big(\theta-\wh{\theta}_{N}\big)\,p(\theta\,|\,U_{N})\dif\theta=\int_{\bR}\ell\bigg(\frac{\lambda}{\sqrt{I_{N}}}\bigg)\,\wt{p}(\lambda\,|\,U_{N})\dif\lambda.
\end{align*}
Hence, to prove \eqref{eq:LimsupaN}, it suffices to show that
\begin{align}\label{eq:LimaN}
\lim_{N\rightarrow\infty}a_{N}\int_{\bR}\ell\bigg(\frac{\lambda}{\sqrt{I_{N}}}\bigg)\,\wt{p}(\lambda\,|\,U_{N})\dif\lambda=\int_{\bR}\wt{\ell}(\lambda)\,\frac{e^{-\lambda^{2}/2}}{\sqrt{2\pi}}\dif\lambda,\quad\bP-\text{a.s.}.
\end{align}
Using conditions (i) and (ii), we obtain that $\wt{\ell}(\lambda)\leq f(\lambda)$, for any $\lambda\in\bR$. Therefore, by conditions (i) and (ii), Theorem \ref{thm:BvM}, and the dominated convergence theorem, we deduce that
\begin{align*}
&\lim_{N\rightarrow\infty}\left|a_{N}\int_{\bR}\ell\bigg(\frac{\lambda}{\sqrt{I_{N}}}\bigg)\wt{p}(\lambda\,|\,U_{N})\dif\lambda-\int_{\bR}\wt{\ell}(\lambda)\,\frac{e^{-\lambda^{2}/2}}{\sqrt{2\pi}}\dif\lambda\right|\\
&\quad\leq\lim_{N\rightarrow\infty}\int_{\bR}\left|a_{N}\ell\bigg(\frac{\lambda}{\sqrt{I_{N}}}\bigg)-\wt{\ell}(\lambda)\right|\bigg|\wt{p}(\lambda\,|\,U_{N})-\frac{e^{-\lambda^{2}/2}}{\sqrt{2\pi}}\bigg|\dif\lambda+\lim_{N\rightarrow\infty}\int_{\bR}\wt{\ell}(\lambda)\,\bigg|\wt{p}(\lambda\,|\,U_{N})-\frac{e^{-\lambda^{2}/2}}{\sqrt{2\pi}}\bigg|\dif\lambda\\
&\qquad +\lim_{N\rightarrow\infty}\int_{\bR}\left|a_{N}\ell\bigg(\frac{\lambda}{\sqrt{I_{N}}}\bigg)-\wt{\ell}(\lambda)\right|\frac{e^{-\lambda^{2}/2}}{\sqrt{2\pi}}\dif\lambda\\
&\quad\leq\lim_{N\rightarrow\infty}3\int_{\bR}f(\lambda)\bigg|\wt{p}(\lambda\,|\,U_{N})-\frac{e^{-\lambda^{2}/2}}{\sqrt{2\pi}}\bigg|\dif\lambda+\int_{\bR}\left(\lim_{N\rightarrow\infty}\bigg|a_{N}\ell\bigg(\frac{\lambda}{\sqrt{I_{N}}}\bigg)-\wt{\ell}(\lambda)\bigg|\right)\frac{e^{-\lambda^{2}/2}}{\sqrt{2\pi}}\dif\lambda\\
&\quad\leq 0+\lim_{K\rightarrow\infty}\int_{-K}^{K}\left(\lim_{N\rightarrow\infty}\bigg|a_{N}\ell\bigg(\frac{\lambda}{\sqrt{I_{N}}}\bigg)-\wt{\ell}(\lambda)\bigg|\right)\frac{e^{-\lambda^{2}/2}}{\sqrt{2\pi}}\dif\lambda\\
&\quad\leq\lim_{K\rightarrow\infty}\left(\lim_{N\rightarrow\infty}\sup_{\lambda\in[-K,K]}\bigg|a_{N}\ell\bigg(\frac{\lambda}{\sqrt{I_{N}}}\bigg)-\wt{\ell}(\lambda)\bigg|\right)\int_{-K}^{K}\frac{e^{-\lambda^{2}/2}}{\sqrt{2\pi}}\dif\lambda=0,
\end{align*}
which completes the proof of \eqref{eq:LimaN}, and therefore, of \eqref{eq:LimsupaN}.

\smallskip
\noindent
\textit{Step 2.} Next, we will show that the sequence of random variables $Y_{N}:=\sqrt{I_{N}}(\wh{\beta}_{N}-\wh{\theta}_{N})$, $N\in\bN$, are uniformly bounded, $\bP-$a.s.. That is, for $\bP-\text{a.s.}\,\,\omega$, there exists $K(\omega)\in(0,\infty)$, such that $|Y_{N}(\omega)|\leq K(\omega)$ for all $N\in\bN$.

For any $\omega\in A:=\{\omega\in\Omega:\,\limsup_{N\rightarrow\infty}|Y_{N}(\omega)|=\infty\}$ and any $K\in\bN$, there exists an increasing sequence of integers $N_{j}=N_{j}(\omega,K)$, $j\in\bN$, such that $N_{j}\uparrow\infty$, as $j\rightarrow\infty$, and $|Y_{N_{j}}(\omega)|\geq K$, for any $j\in\bN$. Consequently,
\begin{align}
\int_{\bR_{+}}\ell\big(\theta-\wh{\beta}_{N_{j}}(\omega)\big)\,p\big(\theta\,|\,U_{N_{j}}\big)(\omega)\dif\theta &=\int_{\bR}\ell\bigg(\frac{\lambda+Y_{N_{j}}(\omega)}{\sqrt{I_{N_{j}}}}\bigg)\,\wt{p}\big(\lambda\,|\,U_{N_{j}}\big)(\omega)\dif\lambda\nonumber\\
\label{eq:KLowerBound} &\geq\int_{-K}^{K}\ell\bigg(\frac{\lambda+K}{\sqrt{I_{N_{j}}}}\bigg)\,\wt{p}\big(\lambda\,|\,U_{N_{j}}\big)(\omega)\dif\lambda,
\end{align}
where we used \eqref{eq:MonotoneLossFunt} in the last inequality. On the other hand, since $\wt{\ell}$ is locally bounded, the function $\lambda\mapsto\1_{[-K,K]}(\lambda)\wt{\ell}(\lambda+K)$ is bounded on $\bR$ and thus satisfies conditions (C1) and (C2). Hence, by Theorem \ref{thm:BvM} and condition (ii), as $j\rightarrow\infty$,
\begin{align}
&\left|a_{N_{j}}\int_{-K}^{K}\ell\bigg(\frac{\lambda+K}{\sqrt{I_{N_{j}}}}\bigg)\,\wt{p}\big(\lambda\,|\,U_{N_{j}}\big)\dif\lambda-\int_{-K}^{K}\wt{\ell}(\lambda+K)\,\frac{e^{-\lambda^{2}/2}}{\sqrt{2\pi}}\dif\lambda\right|\nonumber\\
&\quad\leq\int_{-K}^{K}\left|a_{N_{j}}\ell\bigg(\frac{\lambda+K}{\sqrt{I_{N_{j}}}}\bigg)-\wt{\ell}(\lambda+K)\right|\wt{p}\big(\lambda\,|\,U_{N_{j}}\big)\dif\lambda+\int_{-K}^{K}\wt{\ell}(\lambda+K)\bigg|\wt{p}\big(\lambda\,|\,U_{N_{j}}\big)-\frac{e^{-\lambda^{2}/2}}{\sqrt{2\pi}}\bigg|\dif\lambda\nonumber\\
\label{eq:LimitaNellK} &\quad\leq\sup_{\lambda\in[-2K,2K]}\!\left|a_{N_{j}}\ell\bigg(\frac{\lambda}{\sqrt{I_{N_{j}}}}\bigg)\!-\!\wt{\ell}(\lambda)\right|+\!\int_{-K}^{K}\!\wt{\ell}(\lambda\!+\!K)\bigg|\wt{p}\big(\lambda\,|\,U_{N_{j}}\big)\!-\!\frac{e^{-\lambda^{2}/2}}{\sqrt{2\pi}}\bigg|\dif\lambda\rightarrow 0,\,\,\bP-\text{a.s.}.
\end{align}
Let $B$ denote the exceptional subset of $\Omega$ in which the limit in \eqref{eq:LimitaNellK} does not hold, then $\bP(B)=0$. Without loss of generality, assume that $A\setminus B\neq\emptyset$. By \eqref{eq:KLowerBound} and \eqref{eq:LimitaNellK}, for any $\omega\in A\setminus B$,
\begin{align*}
\limsup_{j\rightarrow\infty}a_{N_{j}}\int_{\bR_{+}}\ell\big(\theta-\wh{\beta}_{N_{j}}(\omega)\big)\,p\big(\theta\,|\,U_{N_{j}}\big)(\omega)\dif\theta &\geq\lim_{j\rightarrow\infty}\int_{-K}^{K}\ell\bigg(\frac{\lambda+K}{\sqrt{I_{N_{j}}}}\bigg)\,\wt{p}\big(\lambda\,|\,U_{N_{j}}\big)(\omega)\dif\lambda\\
&=\int_{-K}^{K}\wt{\ell}(\lambda+K)\,\frac{e^{-\lambda^{2}/2}}{\sqrt{2\pi}}\dif\lambda.
\end{align*}
Since $K\in\bN$ is arbitrary, by condition (iii) and monotone convergence theorem, we have
\begin{align*}
\limsup_{j\rightarrow\infty}a_{N_{j}}\!\int_{\bR_{+}}\!\!\ell\big(\theta-\wh{\beta}_{N_{j}}(\omega)\big)\,p\big(\theta\,|\,U_{N_{j}}\big)(\omega)\dif\theta\geq\lim_{K\rightarrow\infty}\int_{-K}^{K}\wt{\ell}(\lambda+K)\,\frac{e^{-\lambda^{2}/2}}{\sqrt{2\pi}}d\lambda>\!\int_{\bR}\wt{\ell}(\lambda)\,\frac{e^{-\lambda^{2}/2}}{\sqrt{2\pi}}\dif\lambda.
\end{align*}
In view of \eqref{eq:LimsupaN}, we must have $\bP(A\setminus B)=0$ so that $\bP(A)=0$, completing the proof of Step 2.

\smallskip
\noindent
\textit{Step 3.} We now prove (a) and (b). By Step 2, for $\bP-\text{a.s.}\,\,\omega$, there exists $N_{j}=N_{j}(\omega)\in\bN$, $j\in\bN$, such that the sequence $(Y_{N_{j}}(\omega))_{j\in\bN}$ is convergent, as $j\rightarrow\infty$, and we denote its limit by $Y^{*}(\omega)$. For any $K>0$,
\begin{align}
a_{N_{j}}\int_{\bR_{+}}\ell\big(\theta-\wh{\beta}_{N_{j}}(\omega)\big)\,p\big(\theta\,|\,U_{N_{j}}\big)(\omega)\dif\theta&=a_{N_{j}}\int_{\bR}\ell\bigg(\frac{\lambda+Y_{N_{j}}(\omega)}{\sqrt{I_{N_{j}}}}\bigg)\,\wt{p}\big(\lambda\,|\,U_{N_{j}}\big)\dif\lambda\nonumber\\
\label{eq:KLowerboundUNj} &\geq a_{N_{j}}\int_{-K}^{K}\ell\bigg(\frac{\lambda+Y_{N_{j}}(\omega)}{\sqrt{I_{N_{j}}}}\bigg)\,\wt{p}\big(\lambda\,|\,U_{N_{j}}\big)\dif\lambda.
\end{align}
Moreover, for $j\in\bN$ large enough,
\begin{align}
&\left|a_{N_{j}}\int_{-K}^{K}\ell\bigg(\frac{\lambda+Y_{N_{j}}(\omega)}{\sqrt{I_{N_{j}}}}\bigg)\,\wt{p}\big(\lambda\,|\,U_{N_{j}}\big)\dif\lambda-\int_{-K}^{K}\wt{\ell}(\lambda+Y^{*}(\omega))\,\frac{e^{-\lambda^{2}/2}}{\sqrt{2\pi}}\dif\lambda\right|\nonumber\\
&\quad\leq\int_{-K}^{K}\left|a_{N_{j}}\ell\bigg(\frac{\lambda+Y_{N_{j}}(\omega)}{\sqrt{I_{N_{j}}}}\bigg)-\wt{\ell}\big(\lambda+Y_{N_{j}}(\omega)\big)\right|\wt{p}\big(\lambda\,|\,U_{N_{j}}\big)(\omega)\dif\lambda\nonumber\\
&\qquad +\!\int_{-K}^{K}\!\wt{\ell}\big(\lambda\!+\!Y_{N_{j}}(\omega)\big)\bigg|\wt{p}\big(\lambda\,|\,U_{N_{j}}\big)(\omega)\!-\!\frac{e^{-\lambda^{2}/2}}{\sqrt{2\pi}}\bigg|\dif\lambda+\!\int_{-K}^{K}\!\Big|\wt{\ell}\big(\lambda\!+\!Y_{N_{j}}(\omega)\big)\!-\!\wt{\ell}(\lambda\!+\!Y^{*}(\omega))\Big|\frac{e^{-\lambda^{2}/2}}{\sqrt{2\pi}}\dif\lambda\nonumber\\
&\quad\leq\int_{-K}^{K}\left|a_{N_{j}}\ell\bigg(\frac{\lambda+Y_{N_{j}}(\omega)}{\sqrt{I_{N_{j}}}}\bigg)-\wt{\ell}\big(\lambda+Y_{N_{j}}(\omega)\big)\right|\wt{p}\big(\lambda\,|\,U_{N_{j}}\big)(\omega)\dif\lambda\nonumber\\
&\qquad +\!\int_{-K}^{K}\!\Big(\tilde{\ell}(\lambda+Y^{*}(\omega)+1)\1_{[0,\infty)}(\lambda)+\wt{\ell}(\lambda+Y^{*}(\omega)-1)\1_{(-\infty,0)}(\lambda)\Big)\bigg|\wt{p}\big(\lambda\,|\,U_{N_{j}}\big)(\omega)-\frac{e^{-\lambda^{2}/2}}{\sqrt{2\pi}}\bigg|\dif\lambda\nonumber\\
\label{eq:DecompaNU*} &\qquad +\int_{-K}^{K}\Big|\wt{\ell}\big(\lambda+Y_{N_{j}}(\omega)\big)-\wt{\ell}(\lambda+Y^{*}(\omega))\Big|\,\frac{e^{-\lambda^{2}/2}}{\sqrt{2\pi}}\dif\lambda.
\end{align}
By the same argument as in \eqref{eq:LimitaNellK}, the first two integrals in \eqref{eq:DecompaNU*} vanish as $j\rightarrow\infty$, for $\bP-\text{a.s.}\,\,\omega$. Moreover, the monotonicity property \eqref{eq:MonotoneLossFunt} for $\wt{\ell}$ implies that it is almost surely (with respect to the Lebesgue measure) continuous on $\bR$. Hence, conditions (i) and (ii) (which implies that $\wt{\ell}$ is bounded by $f$), together with the dominated convergence theorem, imply that the last integral in \eqref{eq:DecompaNU*} vanishes as $j\rightarrow\infty$, for $\bP-\text{a.s.}\,\,\omega$. Therefore, for $\bP-\text{a.s.}\,\,\omega$, we have
\begin{align}\label{eq:LimitaNellKUNj}
\lim_{j\rightarrow\infty}a_{N_{j}}\int_{-K}^{K}\ell\bigg(\frac{\lambda+Y_{N_{j}}(\omega)}{\sqrt{I_{N_{j}}}}\bigg)\,\wt{p}\big(\lambda\,|\,U_{N_{j}}\big)\dif\lambda=\int_{-K}^{K}\wt{\ell}(\lambda+Y^{*}(\omega))\,\frac{e^{-\lambda^{2}/2}}{\sqrt{2\pi}}\dif\lambda.
\end{align}
Combining \eqref{eq:KLowerboundUNj} and \eqref{eq:LimitaNellKUNj}, and noting that $K>0$ is arbitrary, we obtain that
\begin{align}
\liminf_{j\rightarrow\infty}a_{N_{j}}\int_{\bR_{+}}\ell\big(\theta-\wh{\beta}_{N_{j}}\big)\,p\big(\theta\,|\,U_{N_{j}}\big)\dif\theta &\geq\int_{\bR}\wt{\ell}(\lambda+Y^{*}(\omega))\,\frac{e^{-\lambda^{2}/2}}{\sqrt{2\pi}}\dif\lambda\nonumber\\
\label{eq:LiminfaN} &\geq\int_{\bR}\wt{\ell}(\lambda)\,\frac{e^{-\lambda^{2}/2}}{\sqrt{2\pi}}\dif\lambda,\quad\bP-\text{a.s.},
\end{align}
where we have used condition (iii) in the second inequality.

Now assume that $\bP(Y^{*}\neq 0)>0$. Then inequality on \eqref{eq:LiminfaN} would be strict in $\{Y^{*}\neq 0\}$ by assumption (iii), which is clearly a contradiction to \eqref{eq:LimsupaN}. Therefore, $\bP(Y^{*}\neq 0)=0$. Combining \eqref{eq:LimsupaN} in Step 1 with \eqref{eq:LiminfaN} completes the proof of (b).

To prove (a), it remains to show that every subsequence of $(Y_{N})_{N\in\bN}$ converges to $0$ with probability one. Indeed, assume the contrary. Applying similar arguments as in Step~3 to this exceptional subsequence leads to a contradiction to \eqref{eq:LimsupaN}.

The proof is now complete.
\end{proof}

\begin{remark}\label{rem:LossFunt}
Here we present an example of loss function in $\mathbf{W}_{e,2}$ that satisfies conditions (i)$-$(iii) in Theorem \ref{thm:BayeEst} above. Let $\ell(x)=\exp(|x|^{r})-1$ with $r\in(0,2)$, and $a_{N}=I_{N}^{r/2}$. Without loss of generality, we assume $I_{N}\geq 1$ for all $N\in\bN$. Note that
\begin{align*}
a_{N}\ell\bigg(\frac{\lambda}{\sqrt{I_{N}}}\bigg)=I_{N}^{r/2}\Big(e^{|\lambda|^{r}I_{N}^{-r/2}}-1\Big)\leq |\lambda|^{r}e^{|\lambda|^{r}I_{N}^{-r/2}}\leq |\lambda|^{r}e^{|\lambda|^{r}}=:f(\lambda),
\end{align*}
which satisfies conditions (C1) and (C2), due to Remark~\ref{rem:ExpGrowPriorLossFunt}. Hence, condition (i) is valid. Moreover, let $\wt{\ell}(x)=|x|^{r}$, which clearly satisfies condition (iii). For any $K>0$ and $\lambda\in[-K,K]$,
\begin{align*}
&\left|a_{N}\ell\bigg(\frac{\lambda}{\sqrt{I_{N}}}\bigg)-\wt{\ell}(\lambda)\right|=I_{N}^{r/2}\Big(e^{|\lambda|^{r}I_{N}^{-r/2}}-1-|\lambda|^{r}I_{N}^{-r/2}\Big)\leq K^{2r}I_{N}^{-r/2}e^{K^{r}I_{N}^{-r/2}}\rightarrow 0,
\end{align*}
as $N\rightarrow\infty$, which shows the validity of condition (ii).
\end{remark}

\section{Numerical example}\label{sec:NumericalEx}

In this section, we provide an illustrative numerical example of the asymptotics of the MLE and the Bayesian estimators derived in the previous sections. Specifically, we consider the following equation
\begin{align}\label{eq:exSPDE}
\begin{cases}
\dif u(t,x)+\theta_{0}(-u_{xx}(t,x))\dif t=\sigma\sum_{k=1}^{\infty}u_{k}(t)h_{k}(x)k^{\alpha}\dif w_{k}(t), \ t>0\\
u(0,x)=\frac{\pi^{2}}{4}-\left(x-\frac{\pi}{2}\right)^{2}, x\in[0,\pi], \\
u(t,0)=u(t,\pi)=0, t>0,
\end{cases}
\end{align}
where $h_{k}(x):=\sqrt{2/\pi}\sin(kx)$, $x\in[0,\pi]$, and $\alpha\in\bR$.  We will fix $\sigma=1$ and $T=1$ for the rest of this section. We note that in this case, $A= - \partial_{xx}$, with its eigenfunctions $h_k, \ k\in\bN$, and corresponding eigenvalues  $\mu_k = k^2, \ k\in\bN$. In view of \eqref{eq:FisherInfo}, the Fisher information $I_N \varpropto N^{5-2\alpha}$.

We will consider two set of parameters, one for which condition (E1) from Remark~\ref{rem:condExist} is satisfied, and another one that corresponds to condition (E2) from the same remark. All evaluations are performed using numerical computing environment \textsc{Matlab}, and the source codes are available from the authors upon request.

\medskip\noindent
\textbf{Parameter Set I.} We take the true value of the parameter of interest to be $\theta_{0}=0.3$, and $\alpha<1$; clearly (E1) is satisfied. When $\alpha\uparrow 1$, the rate of divergence of $I_{N}$ as $N\to\infty$, and thus the rates of convergence of both the MLE and the Bayesian estimators, become smaller, as $N\rightarrow\infty$. We will illustrate this below by considering two different regimes $\alpha=0$ and $\alpha=0.999$. We also take two different priors - uninformed and truncated normal.

We simulate $N=20$ independent Brownian sample paths $\{w_{k}(t),t\in[0,1]\}$, $k=1,\ldots,20$, with time-step of $5\times 10^{-5}$, and compute the corresponding Fourier modes $\{u_{k}(t),t\in[0,1]\}$, $k=1,\ldots,20$, using \eqref{eq:Soluk} and $\theta_0$. Then, we compute $\widehat{\theta}_N$ according to \eqref{eq:MLEIni1}, where the It\^{o} integral is approximated by a finite sum with time-step of $5\times 10^{-5}$. We also computed $\wh\theta_N$ using \eqref{eq:MLEIni2},  that yielded  same results and they will not be reported here. Moreover, we simulate the posterior density $p(\theta|U_{N})$ using \eqref{eq:PosteriorThetaDeri}, where the integral in the denominator is approximated using the `integral' \textsc{Matlab} built-in  function.

In Figures~\ref{fig1} and~\ref{fig2}, we present and compare the posterior density $p(\theta|U_{N})$ under two different prior distributions, the uniform uninformative prior on $\bR_+$, $\varrho(x)=\1_{\bR_+}(x)$ (left panels), and truncated normal $\cN(1,0.1;\bR_{+})$ (right panels), for $\alpha=0$ and $\alpha=0.999$, respectively. Also in Figures~\ref{fig1} and~\ref{fig2} we mark the MLE for $N=2$ (black square on the horizontal axis), and $N=4$ (black diamond on the horizontal axis). For both choices of $\alpha$, the posterior densities under both priors converge to the Dirac measure concentrated at the MLEs, which is consistent with the discussion in Remark \ref{rem:BvM}. Moreover, under both priors, the posterior densities with $\alpha=0$ exhibit faster convergence rates than those with $\alpha=0.999$ as expected.
\begin{figure}[!ht]
     \centering
    \begin{subfigure}[t]{0.48\textwidth}
        \centering
        \includegraphics[width=\linewidth]{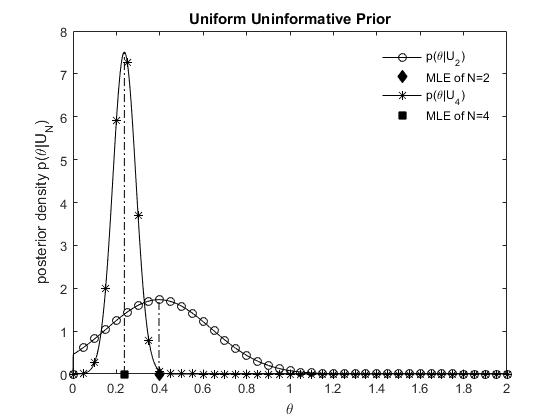}
    \end{subfigure}
    \hfill
     \begin{subfigure}[t]{0.48\textwidth}
        \centering
        \includegraphics[width=\linewidth]{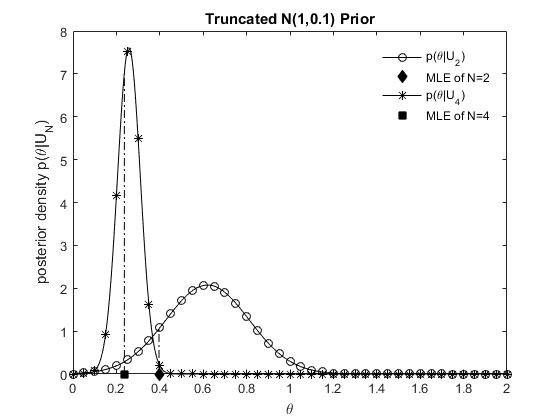}
    \end{subfigure}
    \caption{Parameter Set I. Posterior densities for $N=2,4$, two different priors, and $\alpha=0$.}
    \label{fig1}
\end{figure}
\begin{figure}[!ht]
     \centering
    \begin{subfigure}[t]{0.48\textwidth}
        \centering
        \includegraphics[width=\linewidth]{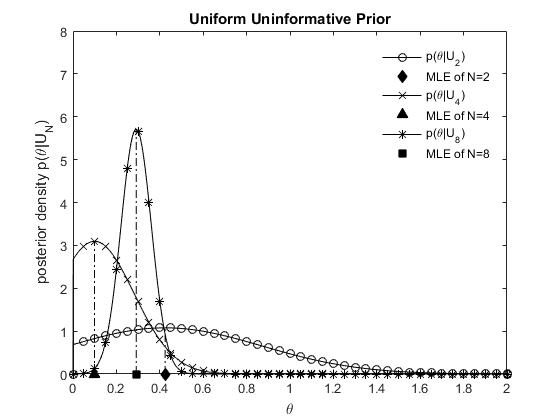}
    \end{subfigure}
    \hfill
     \begin{subfigure}[t]{0.48\textwidth}
        \centering
        \includegraphics[width=\linewidth]{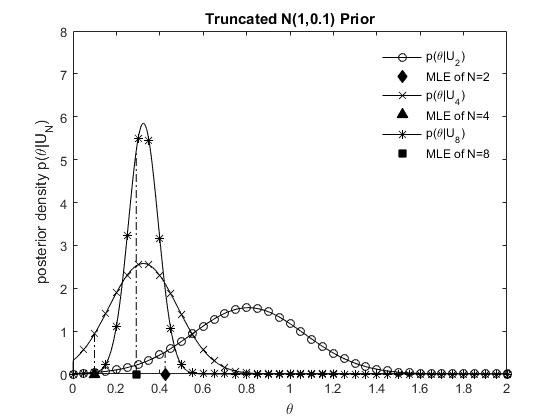}
    \end{subfigure}
    \caption{Parameter Set I. Posterior densities for $N=2,4,8$, two different priors, and $\alpha=0.999$.}
    \label{fig2}
\end{figure}

Next, we take the quadratic loss function $\ell(x)=x^{2}$, in which case $\wt{\beta}_N$ and $\widehat{\beta}_N$ are given by
\begin{align*}
\wt{\beta}_N=\widehat{\beta}_{N}=\int_{\bR_{+}}\theta\,p(\theta|U_{N})\dif\theta.
\end{align*}
Also note that such Bayesian estimator can be viewed as the conditional mean estimator. In Figures~\ref{fig3} and~\ref{fig4}, we compare $\widehat{\theta}_{N}$ with the Bayesian estimator $\widehat{\beta}_{N}$, using the two considered priors and  for $\alpha=0$ and $\alpha=0.999$. Again, while $\widehat{\theta}_N$ and the Bayesian estimators converge to the true parameter $\theta_{0}=0.3$, as the number of Fourier modes increases, for both choices of $\alpha$, the case of $\alpha=0$ tends to have a better convergence rate. Finally, in Figure~\ref{fig5}, we display the values of $\sqrt{I_N}\left|\widehat{\beta}_N-\widehat{\theta}_N\right|$ as function of $N$, that confirm the asymptotic results of Theorem~\ref{thm:BayeUANExp} and Theorem~\ref{thm:BayeEst}.

\begin{figure}[!ht]
     \centering
    \begin{subfigure}[t]{0.48\textwidth}
        \centering
        \includegraphics[width=\linewidth]{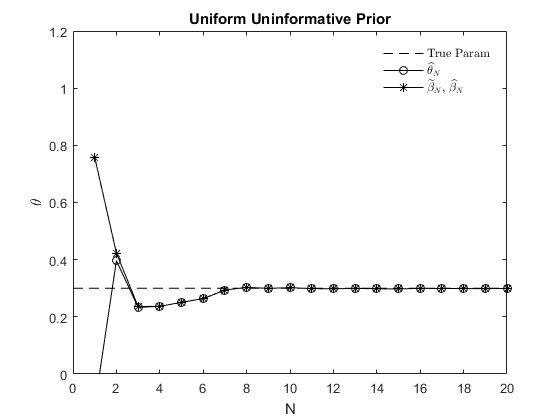}
    \end{subfigure}
    \hfill
     \begin{subfigure}[t]{0.48\textwidth}
        \centering
        \includegraphics[width=\linewidth]{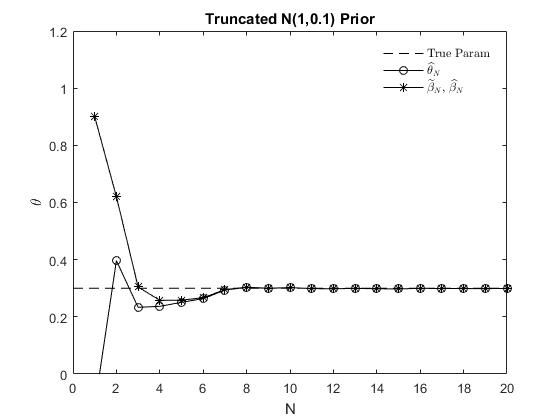}
    \end{subfigure}
    \caption{Parameter Set I. MLE vs. Bayesian Estimator with $\alpha=0$.}
    \label{fig3}
\end{figure}
\begin{figure}[!ht]
     \centering
    \begin{subfigure}[t]{0.48\textwidth}
        \centering
        \includegraphics[width=\linewidth]{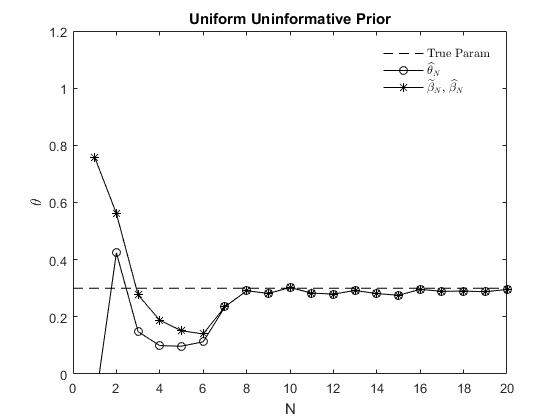}
    \end{subfigure}
    \hfill
     \begin{subfigure}[t]{0.48\textwidth}
        \centering
        \includegraphics[width=\linewidth]{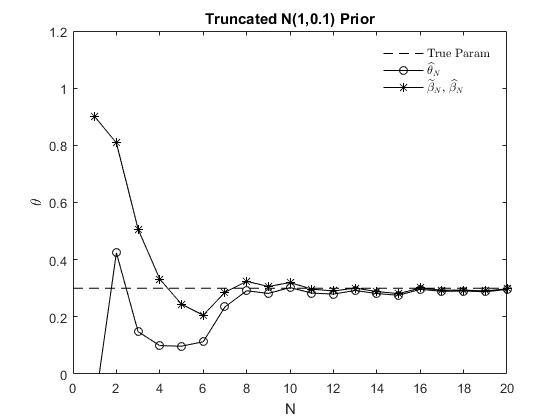}
    \end{subfigure}
    \caption{Parameter Set I. MLE vs. Bayesian Estimator with $\alpha=0.999$.}
    \label{fig4}
\end{figure}
\begin{figure}[!ht]
     \centering
    \begin{subfigure}[t]{0.48\textwidth}
        \centering
        \includegraphics[width=\linewidth]{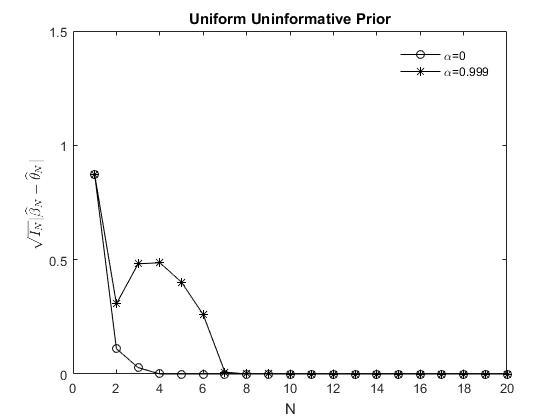}
    \end{subfigure}
    \hfill
     \begin{subfigure}[t]{0.48\textwidth}
        \centering
        \includegraphics[width=\linewidth]{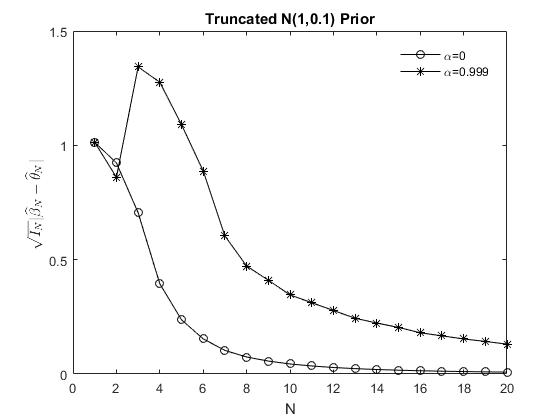}
    \end{subfigure}
    \caption{Parameter Set I. $\sqrt{I_N}\left|\widehat{\beta}_N-\widehat{\theta}_N\right|$, for two different priors, and $\alpha=0$, and $\alpha=0.999$.}
    \label{fig5}
\end{figure}

\medskip

\noindent\textbf{Parameter Set II.} In the second set of parameters, we let $\theta_0=0.505$ and $\alpha=1$, which corresponds to the case (E2) from Remark~\ref{rem:condExist}. We consider the same two priors as above. The posterior densities behave similarly as in Case I. In contrast to Case I, we consider a loss function with exponential growth, namely, $\ell(x)=\exp\left(|x|^{3/2}\right)-1$, and we compute both $\widetilde{\beta}_N$ and $\wh\beta_N$ using $\ell$.
In Figure~\ref{fig8} we plot the values of MLE and the two Bayesian estimators as function of $N$. All estimators perform well, and similarly. Although we display the results for one path, similar behavior is observed on other simulated paths of the solution. The values of scaled errors  $\sqrt{I_N}|\widetilde{\beta}_N-\widehat{\theta}_N|$ and $\sqrt{I_N}|\widehat{\beta}_N-\widehat{\theta}_N|$  are displayed in Figure~\ref{fig9}, again confirming the results of Theorem~\ref{thm:BayeUANExp} and Theorem~\ref{thm:BayeEst}.

\begin{figure}[!ht]
     \centering
    \begin{subfigure}[t]{0.48\textwidth}
        \centering
        \includegraphics[width=\linewidth]{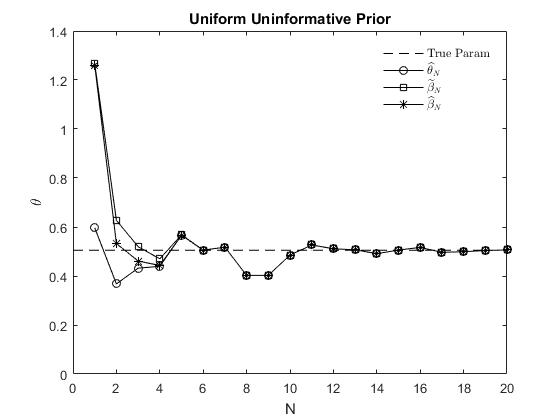}
    \end{subfigure}
    \hfill
     \begin{subfigure}[t]{0.48\textwidth}
        \centering
        \includegraphics[width=\linewidth]{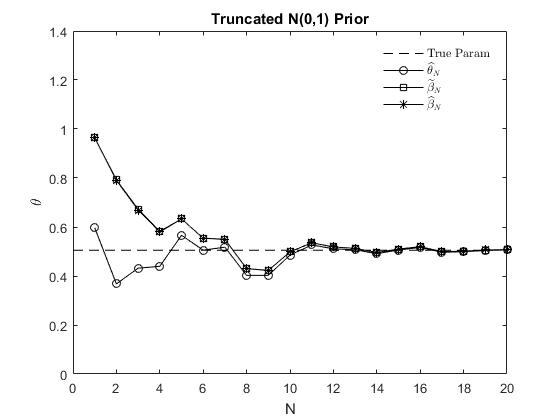}
    \end{subfigure}
    \caption{Parameter Set II. Value of $\widehat{\theta}_N$, $\widetilde{\beta}_N$, $\widehat{\beta}_N$, for two different priors, and $\alpha=1$.}
    \label{fig8}
\end{figure}

\begin{figure}[!ht]
     \centering
    \begin{subfigure}[t]{0.48\textwidth}
        \centering
        \includegraphics[width=\linewidth]{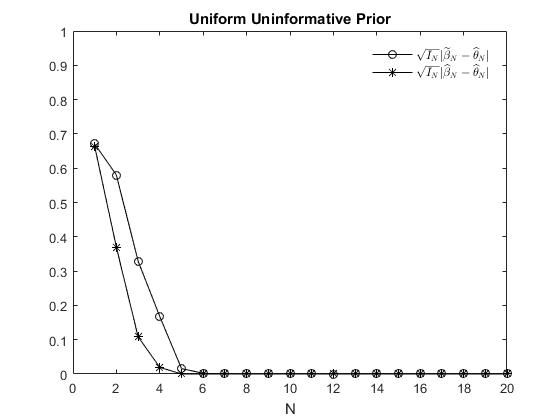}
    \end{subfigure}
    \hfill
     \begin{subfigure}[t]{0.48\textwidth}
        \centering
        \includegraphics[width=\linewidth]{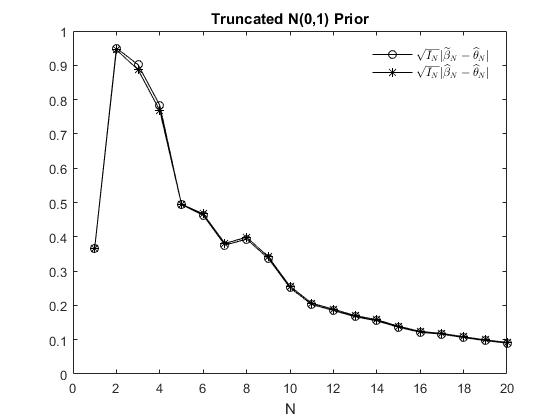}
    \end{subfigure}
    \caption{Parameter Set II.  Scaled errors $\sqrt{I_N}\left|\widetilde{\beta}_N-\widehat{\theta}_N\right|$, $\sqrt{I_N}\left|\widehat{\beta}_N-\widehat{\theta}_N\right|$, for two different priors, and $\alpha=1$}
    \label{fig9}
\end{figure}

\appendix
\section{Auxiliary Results}\label{AppendA}
For the sake of completeness, we recall a version of the strong law of large number, which will be used in the proof of the strong consistency of MLE.
We refer the reader to~\cite[Theorem IV.3.2]{ShiryaevBookProbability} for its detail proof.
\begin{theorem}[Strong Law of Large Numbers]\label{thm:SLLN}
Let $\{\xi_n\}_{n\in\bN}$ be a sequence of independent random variables with finite second moments. Let $\{b_n\}_{n\in\bN}$ be a sequence of non-decreasing real numbers such that $\lim_{n\to\infty}b_{n}=\infty$, and $\sum_{n=1}^{\infty} \Var(\xi_{n})/b_{n}^{2}<\infty.$
Then,
\begin{align*}
\frac{\sum_{k=1}^{n}\left(\xi_{k}-\bE(\xi_{k})\right)}{b_{n}}\rightarrow 0,\quad\bP-\text{a.s.}\,,\quad n\rightarrow\infty.
\end{align*}
\end{theorem}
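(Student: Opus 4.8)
The plan is to reduce the statement to two classical facts about sums of independent random variables: the Kolmogorov one-series convergence theorem and Kronecker's lemma. Put $\zeta_k := (\xi_k - \bE(\xi_k))/b_k$, so that $\{\zeta_k\}_{k\in\bN}$ are independent and centered, with $\sum_{k=1}^\infty \Var(\zeta_k) = \sum_{k=1}^\infty \Var(\xi_k)/b_k^2 < \infty$ by hypothesis. The goal is then to deduce $\bP$-a.s.\ convergence of $\sum_k \zeta_k$, and from there the Ces\`aro-type statement via Kronecker.

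The first and main step is to show that the series $\sum_{k=1}^\infty \zeta_k$ converges $\bP$-a.s. For this I would use Kolmogorov's maximal inequality: for independent centered square-integrable random variables one has $\bP\big(\max_{m\le k\le n}\big|\sum_{j=m}^k \zeta_j\big| > \varepsilon\big) \le \varepsilon^{-2}\sum_{j=m}^n \Var(\zeta_j)$, which follows by applying the Kolmogorov/Doob maximal inequality to the martingale of partial sums (or via a direct stopping-time argument). Since the tails $\sum_{j=m}^\infty \Var(\zeta_j) \to 0$ as $m\to\infty$, letting $n\to\infty$ and then $m\to\infty$ shows that $\{\sum_{k=1}^n \zeta_k\}_{n\in\bN}$ is $\bP$-a.s.\ a Cauchy sequence in $\bR$, hence $\bP$-a.s.\ convergent to a finite limit.

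The second step is deterministic. Kronecker's lemma states that if $\{b_n\}_{n\in\bN}$ is non-decreasing with $b_n \to \infty$ and $\sum_{k=1}^\infty x_k/b_k$ converges to a finite limit, then $b_n^{-1}\sum_{k=1}^n x_k \to 0$; its proof is a summation-by-parts estimate. Applying this pathwise, on the $\bP$-full event where $\sum_k \zeta_k$ converges, with $x_k := \xi_k - \bE(\xi_k)$ and the same $b_k$, we obtain
\[
\frac{\sum_{k=1}^n\big(\xi_k - \bE(\xi_k)\big)}{b_n} \longrightarrow 0, \qquad \bP-\text{a.s.}, \quad n\to\infty,
\]
which is the asserted conclusion.

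The hard part is the $\bP$-a.s.\ convergence of $\sum_k \zeta_k$ in Step~1; the rest is deterministic bookkeeping. In a fully self-contained treatment one would also need short proofs of Kolmogorov's maximal inequality and of Kronecker's lemma, but both are standard and, as noted in the statement, are contained in \cite[Theorem IV.3.2]{ShiryaevBookProbability}, so I would simply cite them there rather than reprove them.
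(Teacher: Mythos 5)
Your proposal is correct and is exactly the classical argument. The paper itself does not supply a proof of Theorem~\ref{thm:SLLN}; it only cites Shiryaev (Theorem~IV.3.2), and the proof given there is precisely the two-step route you describe: Kolmogorov's maximal inequality yields a.s.\ convergence of $\sum_k (\xi_k-\bE\xi_k)/b_k$ (the Khinchin--Kolmogorov one-series theorem), and Kronecker's lemma converts this into $b_n^{-1}\sum_{k\le n}(\xi_k-\bE\xi_k)\to 0$ pathwise. So your proof matches the one the paper points to.

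One small remark worth noting for hygiene: Kronecker's lemma, as you invoke it, requires $b_n>0$. The theorem statement only says the $b_n$ are non-decreasing with $b_n\to\infty$, which allows finitely many non-positive terms (and indeed $b_n=0$ would make $\Var(\xi_n)/b_n^2$ ill-defined). In the application in Section~\ref{sec:MLE} one has $b_n=\sum_{k\le n}\mu_k^2 q_k^{-2}>0$, so positivity is automatic; in a self-contained statement you would either assume $b_n>0$ or observe that only the tail of the sequence matters. This does not affect the validity of your argument.
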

Also here we present a simple technical lemma used in Section~\ref{sec:MLE}.

\begin{lemma}\label{lem:techlem1}
If the sequence $\{a_n\}_{n\in\bN}\subset\bR$ satisfies $a_1>0$ and $a_k\ge 0, k\ge 2$, then
\begin{align*}
\sum_{n=1}^N\frac{a_n}{(\sum_{k=1}^n a_k)^2}<+\infty.
\end{align*}
\begin{proof}
Note the following
\begin{align*}
\sum_{n=1}^N\frac{a_n}{(\sum_{k=1}^n a_k)^2} &\le \frac1{a_1^2} + \sum_{n=2}^N\frac{a_n}{(\sum_{k=1}^{n-1} a_k)(\sum_{k=1}^n a_k)}\\
&= \frac1{a_1^2} + \sum_{n=2}^N\left(\frac{1}{\sum_{k=1}^{n-1} a_k}-\frac{1}{\sum_{k=1}^n a_k}\right)\\
&= \frac1{a_1^2} + \frac{1}{a_1}-\frac{1}{\sum_{k=1}^N a_k},
\end{align*}
which finishes the proof.
\end{proof}
\end{lemma}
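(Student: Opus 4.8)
The plan is to exploit the monotonicity of the partial sums. Write $S_n := \sum_{k=1}^n a_k$ for $n\in\bN$. Since $a_1>0$ and $a_k\ge 0$ for $k\ge 2$, the sequence $\{S_n\}_{n\in\bN}$ is non-decreasing with $S_n\ge S_1=a_1>0$ for every $n$; in particular the denominators in the statement never vanish, and $S_{n-1}\le S_n$ for all $n\ge 2$. The whole argument will be a telescoping estimate, so the real content is a bound uniform in $N$ (each finite partial sum being trivially finite).

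First I would peel off the $n=1$ term, which equals $a_1/S_1^2=1/a_1$, and handle the tail $\sum_{n=2}^N a_n/S_n^2$ separately. For $n\ge 2$ the key observation is that $S_n^2\ge S_{n-1}S_n$ because $0<S_{n-1}\le S_n$, whence
\begin{align*}
\frac{a_n}{S_n^2}\le\frac{a_n}{S_{n-1}S_n}=\frac{S_n-S_{n-1}}{S_{n-1}S_n}=\frac{1}{S_{n-1}}-\frac{1}{S_n}.
\end{align*}
Summing this over $n=2,\dots,N$, the right-hand side telescopes to $1/S_1-1/S_N\le 1/a_1$.

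Combining the two pieces gives $\sum_{n=1}^N a_n/S_n^2\le 1/a_1+(1/a_1-1/S_N)$, a bound independent of $N$, and letting $N\to\infty$ yields the assertion. I would present this as the single display chain exactly as above, noting the three equalities/inequalities: (i) $S_n^2\ge S_{n-1}S_n$; (ii) the partial-fraction identity $a_n/(S_{n-1}S_n)=1/S_{n-1}-1/S_n$; (iii) the telescoping collapse.

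There is essentially no obstacle here. The only point requiring a little care is that the partial-fraction manipulation needs $S_{n-1}>0$, which fails at $n=1$ (where $S_0=0$), so that term must be estimated directly — hence the use of the hypothesis $a_1>0$, which both makes $S_1$ strictly positive and supplies the explicit constant. The hypothesis $a_k\ge 0$ for $k\ge 2$ is used exactly once, to guarantee the monotonicity $S_{n-1}\le S_n$ that makes (i) valid.
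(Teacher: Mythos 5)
Your proof is correct and is essentially the same telescoping argument as the paper's: peel off the $n=1$ term, bound $S_n^2\ge S_{n-1}S_n$, rewrite $a_n/(S_{n-1}S_n)$ as a partial fraction, and telescope. (Incidentally, you correctly identify the $n=1$ term as $a_1/S_1^2=1/a_1$, whereas the paper writes $1/a_1^2$; this looks like a harmless typo in the paper since either constant is finite and the inequality step only requires some finite bound on that term.)
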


\section{Discussions on derivation of the posterior density}\label{AppendixB}

In this appendix, we will present a formal derivation of the posterior density \eqref{eq:PosteriorTheta}. We make the following standing assumptions
\begin{itemize}
\item [(i)]The random variable $\Theta$ is independent of the Brownian motions $\{w_{k},k\in\bN\}$ and possesses a probability density function $\varrho$;
\item [(ii)] $\sigma(\Theta)\subset\sF_{0}$.
\end{itemize}
We let $\theta$ stand for the dummy variable of $\Theta$. Recall that $U_{N}^{\theta}$ denotes the first $N$ Fourier modes of the solution $u^{\theta}$ to \eqref{eq:mainSPDE} with parameter $\theta$. Let $U_{N}^{\Theta}$ be the $\bR^{N}$-valued process obtained by substituting $\theta$ with $\Theta$ in $U_{N}^{\theta}$. By condition (i), for any $\cB((\bR^{N})^{[0,T]})\otimes\cB(\bR_{+})$-measurable functional $f$ on $(\bR^{N})^{[0,T]}\times\bR_{+}$ with $\bE(f(U_{N}^{\Theta},\Theta))<\infty$, we have that
\begin{align}\label{eq:ExpfUNTheta}
\bE\left(f\left(U_{N}^{\Theta},\Theta\right)\right)=\int_{\bR_{+}}\bE\left(f\left(U^{\theta}_{N},\theta\right)\right)\varrho(\theta)\dif\theta.
\end{align}
Next, we will show that, for any $B\in\cB(\bR_{+})$,
\begin{align}\label{eq:CondThetaUNTheta}
\bP\left(\Theta\in B\mid U^{\Theta}_{N}\right)=\frac{\displaystyle{\int_{B}r_{N}^{\eta}\!\left(U^{\Theta}_{N}\right)\varrho(\eta)\dif\eta}}{\displaystyle{\int_{\bR_{+}}r_{N}^{\eta}\!\left(U^{\Theta}_{N}\right)\varrho(\eta)\dif\eta}},
\end{align}
where,
\begin{align*}
r_{N}^{\eta}\!\left(U_{N}^{\theta}\right):=\exp\left(-\frac{\eta}{\sigma^{2}}\sum_{k=1}^{N}\mu_kq_k^{-2}\int_{0}^{T}\frac{\dif u_{k}^{\theta}(t)}{u_{k}^{\theta}(t)}-\frac{\eta^{2}T}{2\sigma^{2}}\sum_{k=1}^{N}\mu_{k}^{2}q_{k}^{-2}\right),
\end{align*}
for any $\theta,\eta\in\bR_{+}$
For the sake of argument, we assume that $r_{N}^{\eta}(\cdot)$ is $\cB((\bR^{N})^{[0,T]})$-measurable.

Clearly, \eqref{eq:CondThetaUNTheta} follows immediately from the following equality
\begin{align}\label{eq:CondThetaUNTheta2}
\bE\left(\left.\1_{\{\Theta\in B\}}\int_{\bR_{+}}r_{N}^{\eta}\!\left(U^{\Theta}_{N}\right)\varrho(\eta)\dif\eta\,\right|U_{N}^{\Theta}\right)=\int_{B}r_{N}^{\eta}\!\left(U^{\Theta}_{N}\right)\varrho(\eta)\dif\eta.
\end{align}
To prove \eqref{eq:CondThetaUNTheta2}, for any $A\in\cB((\bR^{N})^{[0,T]})$, we first deduce from \eqref{eq:ExpfUNTheta} that
\begin{align*}
\bE\left(\1_{\{U^{\Theta}_{N}\in A\}}\1_{\{\Theta\in B\}}\int_{\bR_{+}}r_{N}^{\eta}\!\left(U^{\Theta}_{N}\right)\varrho(\eta)\dif\eta\right)&=\int_{B}\bE\left(\1_{\{U^{\theta}_{N}\in A\}}\int_{\bR_{+}}r_{N}^{\eta}\big(U^{\theta}_{N}\big)\varrho(\eta)\dif\eta\right)\varrho(\theta)\dif\theta\\
&=\int_{\bR_{+}}\left(\int_{B}\bE\left(\1_{\{U^{\theta}_{N}\in A\}}r_{N}^{\eta}\big(U^{\theta}_{N}\big)\right)\varrho(\theta)\dif\theta\right)\varrho(\eta)\dif\eta.
\end{align*}
In view of \eqref{eq:LogLikelihood}, we have
\begin{align}\label{eq:dPetadPtheta}
\frac{\dif\bP^{\eta}_{N}}{\dif\bP^{\theta}_{N}}\left(U^{\theta}_{N}\right)=\frac{r^{\eta}_{N}\big(U^{\theta}_{N}\big)}{r^{\theta}_{N}\big(U^{\theta}_{N}\big)}.
\end{align}
Hence, by Girsanov theorem,
\begin{align}
\bE\!\left(\!\1_{\{U^{\Theta}_{N}\in A\}}\1_{\{\Theta\in B\}}\!\int_{\bR_{+}}\!r_{N}^{\eta}\!\left(U^{\Theta}_{N}\right)\!\varrho(\eta)\!\dif\eta\!\right)&=\int_{\bR_{+}}\!\!\left(\int_{B}\bE\left(\frac{\dif\bP^{\eta}_{N}}{\dif\bP^{\theta}_{N}}\big(U^{\theta}_{N}\big)\1_{\{U^{\theta}_{N}\in A\}}r^{\theta}_{N}\big(U^{\theta}_{N}\big)\!\right)\!\varrho(\theta)\!\dif\theta\!\right)\!\varrho(\eta)\!\dif\eta\nonumber\\
\label{eq:CondThetaUNThetaLeft} &=\int_{\bR_{+}}\left(\int_{B}\bE\left(\1_{\{U^{\eta}_{N}\in A\}}r^{\theta}_{N}\big(U^{\eta}_{N}\big)\right)\varrho(\theta)\dif\theta\right)\varrho(\eta)\dif\eta.
\end{align}
On the other hand,
\begin{align}
\bE\left(\1_{\{U^{\Theta}_{N}\in A\}}\int_{B}r^{\eta}_{N}\!\left(U^{\Theta}_{N}\right)\varrho(\eta)\dif\eta\right)&=\int_{\bR_{+}}\bE\left(\1_{\{U^{\theta}_{N}\in A\}}\int_{B}r^{\eta}_{N}\big(U^{\theta}_{N}\big)\varrho(\eta)\dif\eta\right)\varrho(\theta)\dif\theta\nonumber\\
\label{eq:CondThetaUNThetaRight} &=\int_{\bR_{+}}\left(\int_{B}\bE\left(\1_{\{U^{\theta}_{N}\in A\}}r^{\eta}_{N}\big(U^{\theta}_{N}\big)\right)\varrho(\eta)\dif\eta\right)\varrho(\theta)\dif\theta.
\end{align}
Combining \eqref{eq:CondThetaUNThetaLeft} and \eqref{eq:CondThetaUNThetaRight} leads to \eqref{eq:CondThetaUNTheta2}.

%
%

\bibliographystyle{alpha} 

\def\cprime{$'$}

\end{document}